\newtheorem{theorem}{Theorem}[section]
\newtheorem{corollary}[theorem]{Corollary}
\newtheorem{assumption}[theorem]{Assumption}
\newtheorem{lemma}[theorem]{Lemma}
\newtheorem{claim}[theorem]{Fact}
\newtheorem{proposition}[theorem]{Proposition}
\theoremstyle{definition}
\newtheorem{definition}[theorem]{Definition}
\newtheorem{condition}[theorem]{Condition}
\theoremstyle{remark}
\newtheorem{example}[theorem]{Example}
\newcommand{\R}{\mathbb{R}}
\newcommand{\Z}{\mathbb{Z}}
\newcommand{\N}{\mathbb{N}}
\newcommand{\Proba}[1]{\mathbb{P}\left( #1 \right)}
\newcommand{\norm}[2]{\left\Vert #1 \right\Vert_{#2}}
\newcommand{\scal}[2]{\left<#1,#2\right>}
\DeclareMathOperator{\cross}{Cross}
\DeclareMathOperator{\var}{Var}
\DeclareMathOperator{\cov}{Cov}
\title{Shadow and percolation II: discrete and continuous landscapes with correlations}
\author{David Vernotte}
\begin{document}
\maketitle

\begin{abstract}
    In this paper we consider a discrete or continuous landscape with correlations and we consider a source of light (a sun) at infinity emitting parallel rays of light making a slope $\ell\in \R$ with the horizontal plane. Depending on the value of $\ell$ some portions of the landscape may be lit by the sun or be in the shadow. Under some assumptions, we show  that if $\ell>0$ is big enough there exists a giant component of light, whereas if $\ell>0$ is small enough, and we are in the discrete case, then there exists a giant component of shadow. We relate this problem to the study of the percolation properties of a new random planar field.
\end{abstract}

\tableofcontents

\section{Introduction}

\begin{figure}
    \centering
    \includegraphics[width=0.8\textwidth]{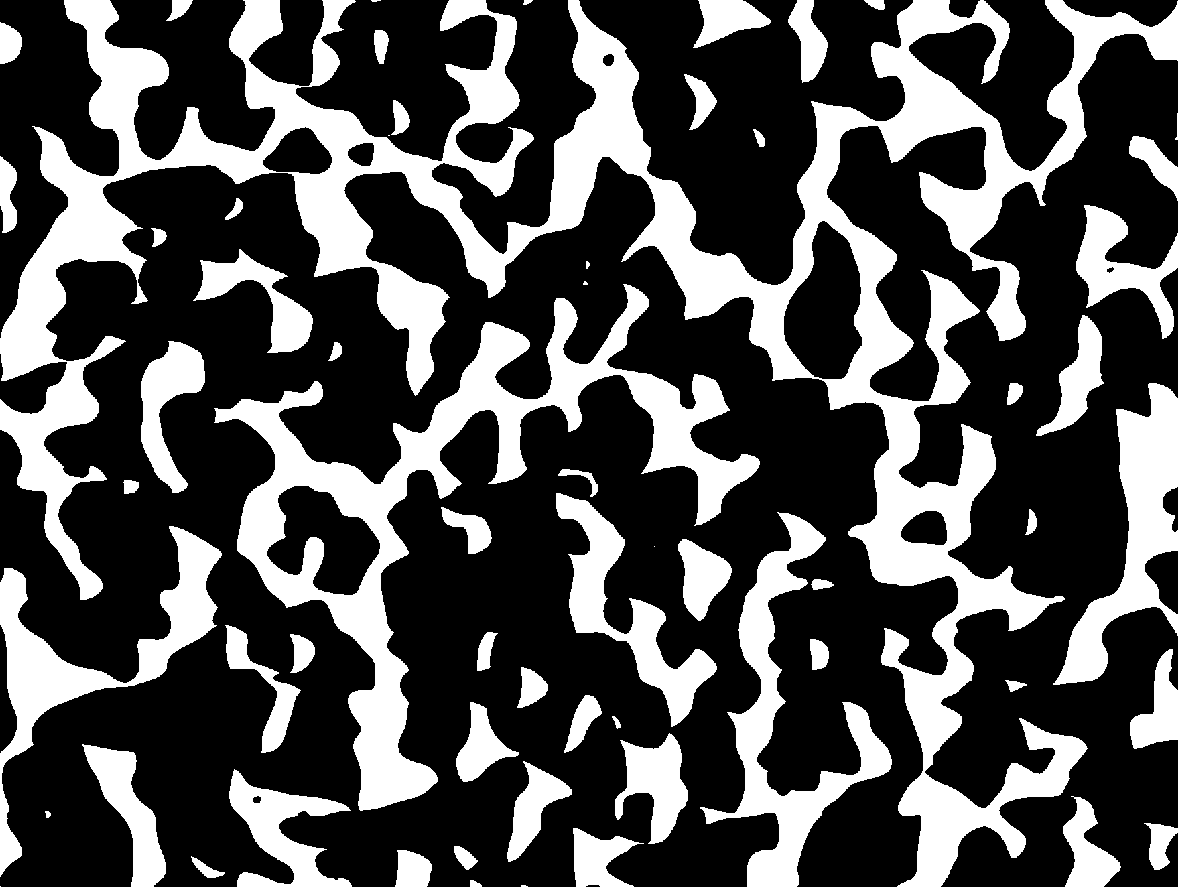}
    \caption{Illustration of $\{\alpha^f>\ell\}$ when $f$ is the Bargmann-Fock field and $\ell=0.3$. In black, the points $x$ such that $\alpha^f(x)>\ell$, in white the points $x$ such that $\alpha^f(x)\leq \ell$.}
    \label{fig:intro}
\end{figure}
In this paper, we study the percolation properties of the shadow of a planar random field. Given a function $f : \R^2\to \R$, we interpret $f$ as a height function describing a landscape of mountains. This landscape may be interpreted as a surface in $\R^3$ given by the graph of $f$. Given a parameter $\ell\in \R_+$, we consider a source of light located at infinity in the direction $e_1=(1,0)$ and emitting parallel rays of lights that make a slope $\ell$ with the horizontal plane $\R^2\times \{0\}.$ We ask the question of understanding and describing the set of points lit by the sun and the set of points that are in the shadow of some higher peak. More formally, given a function $f \in \mathcal{C}^1 (\R^2,\R)$ we introduce the field $\alpha^f$ by
\begin{equation}
    \label{eq:alpha_intro}
    \forall x\in \R^2,\ \alpha^f(x) := \sup_{r\in \mathbb{R}_+^*}\frac{f(x+re_1)-f(x)}{r} \in \overline{\R}.
\end{equation}
When $f$ is a random function, then $\alpha^f$ also becomes random. It appears that understanding which points are in the shadow when the slope is $\ell$ is equivalent to understanding the set
\begin{equation}
    \label{eq:level_set_intro}
    \{\alpha \geq \ell\} = \{x\in \R^2\ |\ \alpha^f(x)\geq \ell\}.
\end{equation}
However it follows from \eqref{eq:alpha_intro} that even when $f$ has a very simple law, the field $\alpha^f$ does not present many of the nice properties usually found in percolation models, such as invariance by rotation of the law, independence, duality of $\alpha^f$ and $-\alpha^f$, FKG property etc. In a previous paper the author studied a discrete version of this model where the landscape presented independence. In this paper we extend the model and the result to a broader class of landscapes, including continuous landscapes with correlations and discrete landscapes with correlations. However, while the previous paper dealt with general laws (as soon as there was independence between sites), in this paper we restrict our attention to Gaussian fields (but with correlations between sites). Indeed, such continuous and correlated fields have become a subject of interest in percolation, see \cite{Ale96}, \cite{BG17}, \cite{RV19}, \cite{MV20}, \cite{Sev21}, \cite{DCRRV23} for an incomplete list of references. First, we present how to construct such a continuous random Gaussian field $f : \R^2 \to \R$. We first introduce some notations.
\begin{definition}
A planar white noise $W$ is a centered Gaussian field indexed by functions of $L^2(\R^2)$ such that for any $\varphi_1,\varphi_2 \in L^2(\R^2)$ we have
\begin{equation}
    \mathbb{E}[W(\varphi_1)W(\varphi_2)]=\int_{\R^2}\varphi_1(x)\varphi_2(x)dx.
\end{equation}
\end{definition}
We refer to \cite{Jan97} for a construction and properties of the white noise. Given a function $q\in L^2(\R^2)$ such that $q(-x)=q(x)$, we set
\begin{equation}
    f:= q\ast W,
\end{equation}
where $\ast$ denotes convolution (that is, $f(x)=W(q(x-\cdot))$). Then, the field $f : \R^2 \to \R$ is a planar centered Gaussian field that is stationary (that is, the law of $(f(x))_{x\in \R^2}$ is the same as the law of $(f(x+y))_{x\in \R^2}$ for any $y\in \R^2$).
Moreover, a simple computation shows that
$$\mathbb{E}[f(x)f(y)]=(q\ast q)(x-y).$$
We make some assumptions on $q$.
\begin{assumption}
\label{a:q}
There exists some $\beta>1$ such that the following holds.
\begin{itemize}
    \item (Regularity) The function $q$ is in $\mathcal{C}^4(\R^2)$ and for any $\alpha=(\alpha_1,\alpha_2)\in \mathbb{N}^2$ with $\alpha_1+\alpha_2\leq 4$ the function $\partial^\alpha q$ is in $L^2(\R^2)$.
    \item (Symmetry) The function $q$ verifies $q(-x)=q(x)$.
    \item (Decay of correlations) There exists a constant $C>0$ such that for all $\alpha=(\alpha_1,\alpha_2)\in \mathbb{N}^2$ with $\alpha_1+\alpha_2\leq 2$ and for any $\norm{x}{}\geq 1$,
    $$|\partial^\alpha q(x)|\leq \frac{C}{\norm{x}{}^\beta}.$$
\end{itemize}
\end{assumption}
In the following we assume that $f=q\ast W$ where $q$ satisfies Assumption \ref{a:q} for some $\beta>1$. The regularity assumption allows us to see $f$ as a twice differentiable function from $\R^2$ to $\R$. The symmetry assumption guarantees the fact that the law of $f$ is stationary. Finally the assumption on the decay of correlations allows to get quasi-independence for the field $f$ (in particular this assumption implies ergodicity of the law of the field with respect to the flow of translations).
\begin{example}
An important example is the Bargmann-Fock field. If one takes $q(x)=\sqrt{\frac{2}{\pi}}e^{-\norm{x}{}^2}$ then the field $f=q\ast W$ verifies
$$\mathbb{E}[f(x)f(y)]=e^{-\frac{1}{2}\norm{x-y}{}^2},$$
and we say that $f$ is a planar Bargmann-Fock field. A Bargmann-Fock field also admits another construction which is as follows: let $(a_{i,j})_{i,j\geq 0}$ be a collection of independent standard Gaussian random variables, then the function
$$g(x)=e^{-\norm{x}{}^2/2}\sum_{i,j\geq 0}a_{i,j}\frac{x_1^ix_2^j}{\sqrt{i!j!}}$$
also has the law of the Bargmann-Fock field (it has the same covariance function as previously). We comment that from the expression of $q$, the Bargmann-Fock field satisfies Assumption \ref{a:q} for any $\beta>1$.
\end{example}
Given such a Gaussian field $f$, then the field $\alpha^f$ is defined by \eqref{eq:alpha_intro}.
One of the main result of this paper is the following.
\begin{theorem}
\label{thm:continuous_l_grand}
Let $f=q\ast W$ where $q$ satisfies Assumption \ref{a:q} for some $\beta>\frac{5}{2}$. There exists $\ell_1\in ]0,\infty[$ (depending on $q$) such that, for any $\ell>\ell_1$, almost surely, the set $\{\alpha^f \leq \ell\}$ contains a unique unbounded connected component and the set $\{\alpha^f \geq \ell\}$ does not contain any unbounded connected component.
\end{theorem}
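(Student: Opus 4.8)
The plan is to show that, for $\ell$ large, the shadow set $\{\alpha^f\ge\ell\}$ is strongly subcritical (no unbounded component), and then to obtain the complementary statement about $\{\alpha^f\le\ell\}$ by a planar duality/renormalisation argument. The first and most robust ingredient is a \emph{local estimate}: for every unit box $Q$,
\begin{equation*}
\mathbb{P}\big(\exists x\in Q:\ \alpha^f(x)\ge\ell\big)\le Ce^{-c\ell^2},
\end{equation*}
with $c,C>0$ depending only on $q$. To get this I would decompose the defining condition of $\{\alpha^f(x)\ge\ell\}$ according to the dyadic scale of the witnessing $r$: if $\alpha^f(x)\ge\ell$ then either some $r\in(0,1]$ gives $f(x+re_1)-f(x)\ge\ell r$, which by chaining forces $\sup|\nabla f|\ge c\ell$ on a slightly enlarged box — an event of probability $\le Ce^{-c\ell^2}$ by Borell--TIS, $\nabla f$ being Gaussian with bounded variance by the regularity in Assumption~\ref{a:q} — or, for some integer $k\ge0$, some $r\in[2^k,2^{k+1}]$ gives $f(x+re_1)\ge f(x)+\ell 2^k$, which forces $f$ to exceed $\tfrac{\ell}{2}2^k-\sup_Q|f|$ somewhere on a segment of length $O(2^k)$; a Borell--TIS bound over an $O(2^k)$-box and a sum over $k$ contribute $\le C\sum_k 2^k e^{-c\ell^2 4^k}\le C'e^{-c\ell^2}$. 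The structural point to record is that the scale-$k$ event is \emph{local}: it reads $f$ only on $Q\cup(Q+[2^k,2^{k+1}]e_1)$.

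Next I would set up a renormalisation. Call $Q$ \emph{bad} if $\alpha^f\ge\ell$ somewhere on $Q$, and let $K(Q)$ be the least dyadic scale witnessing this, so that $\mathbb{P}(K(Q)=k)\le C2^ke^{-c\ell^2 4^k}$ and $\{K(Q)=k\}$ is measurable with respect to $f$ on an $O(2^k)$-neighbourhood of $Q$. Using that $f$ is Gaussian with polynomially decaying correlations — this is where $\beta>\tfrac52$ should enter, to obtain a quantitative decoupling for events of $f$ supported at distance $R$ with an error small enough to absorb the combinatorial and volume factors below — I would show that for $\ell>\ell_1$ a $*$-connected path of bad boxes from the origin to distance $N$ is extremely unlikely: such a path of $m\ge N$ boxes is either made of boxes of bounded scale, in which case a Peierls count with the $Ce^{-c\ell^2}$ marginal and the decoupling estimate bounds its probability by $(C\varepsilon(\ell))^{cm}$, or it contains a box of some scale $k$, already of probability $\le C2^ke^{-c\ell^2 4^k}$, whose spatial footprint $2^k$ makes such high-scale bad boxes a summably sparse family. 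Summing over paths and letting $N\to\infty$ gives $\mathbb{P}(0\leftrightarrow\partial B_N\text{ in }\{\alpha^f\ge\ell\})\to0$; since the event ``$\{\alpha^f\ge\ell\}$ has an unbounded component'' is translation invariant, the ergodicity guaranteed by Assumption~\ref{a:q} forces its probability to be $0$.

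For the statement about $\{\alpha^f\le\ell\}$ I would re-use the same inputs for the \emph{good} boxes (those with $\alpha^f<\ell$, of probability $\ge1-Ce^{-c\ell^2}$, local up to the truncation above): they should stochastically dominate a supercritical Bernoulli percolation when $\ell$ is large, equivalently a $2N\times N$ rectangle is crossed in the long direction by $\{\alpha^f\le\ell\}$ with probability $\ge1-Ce^{-cN}$. Feeding these crossings into the usual construction of circuits in nested dyadic annuli (Borel--Cantelli) produces an unbounded connected component of $\{\alpha^f\le\ell\}$ almost surely, and the standard ``two unbounded components must merge across a large annulus'' argument together with ergodicity gives uniqueness.

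The genuinely delicate step is the renormalisation, and the reason is the long-range nature of $\alpha^f$: $\alpha^f(x)$ depends on $f$ on the whole half-line $x+\R_+e_1$, so the bad events are not even finite-range dependent, and the naive ``truncate at a large constant scale'' fails because $\{\alpha^f\ge\ell\}=\bigcup_R\{\alpha_R^f\ge\ell\}$ (where $\alpha_R^f$ restricts the supremum to $r\le R$) genuinely needs all scales. The scale decomposition is what rescues this: the footprint $2^k$ of a scale-$k$ bad event is more than compensated by a probability that is super-exponentially small in $4^k$, so the long-range part behaves like a sparse subcritical family that cannot contribute to percolation. Turning this heuristic into a clean multi-scale Peierls estimate, while simultaneously applying the Gaussian decoupling at each scale, is the technical heart of the proof, and is what dictates the hypothesis $\beta>\tfrac52$.
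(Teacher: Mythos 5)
Your overall strategy is sound and close in spirit to the paper's, but you take a genuinely different route at two points, and the hardest step — the multi-scale argument — is left at the heuristic level where the paper supplies a concrete machine.

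Where you agree with the paper: the need to approximate $\alpha^f$ by objects with finite range of dependence, the observation that the approximation error for a truncation at scale $R$ is super-exponentially small (Gaussian Borell--TIS giving $e^{-c\,\cdot\, R^{\text{power}}}$), the use of gluing constructions to produce a unique unbounded component and rule out coexistence, and the identification that the constraint $\beta>\tfrac52$ comes from needing a decoupling exponent $>1$. Your quantitative local estimate $\mathbb{P}\left(\exists x\in Q:\alpha^f(x)\ge\ell\right)\le Ce^{-c\ell^2}$ via dyadic decomposition of the witnessing $r$ is correct and sharper than what the paper explicitly uses (the paper's Lemma \ref{lemma:seed_high_prob} is non-quantitative in $\ell$), though that sharpness is not needed for the statement.

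Where you diverge: the paper runs a \emph{single} argument — it proves that crossing probabilities for $\{\alpha^f\le\ell\}$ tend to one via the renormalization scheme of Section \ref{sec:renorm}, and then Theorem \ref{thm:crossings} (Appendix \ref{sec:appendixA}) simultaneously delivers uniqueness of the unbounded component in $\{\alpha^f\le\ell'\}$ and non-percolation of $\{\alpha^f\ge\ell'\}$ for $\ell'>\ell$. You instead propose two separate arguments: a multi-scale Peierls count to kill $\{\alpha^f\ge\ell\}$, and a Liggett--Schonmann--Stacey dominance argument plus gluing for $\{\alpha^f\le\ell\}$. Both are viable, but the unified route is tidier and avoids a second pass through the decoupling estimates. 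A small imprecision: you attribute $\beta>\tfrac52$ to decoupling of $f$ itself, but $f$ decouples with exponent $2\beta-2$ for any $\beta>1$ (Proposition \ref{prop:severo}); the constraint comes from turning this into a finite-range approximation of $\alpha^f$, which costs a factor $1/3$ in the exponent (Lemma \ref{lemma:trunc_1}, whence $b_0=\min(2,\tfrac{2\beta-2}{3})$ needs $\beta>\tfrac52$ to exceed $1$).

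The genuine gap is in the step you yourself flag as ``the technical heart.'' A literal Peierls count over $*$-connected paths labelled by the scales $K(Q_i)$ does not close: the union bound over paths of length $m$ produces a factor $C^m$, the decoupling error at scale $R$ contributes a term of order $m\,e^{-cR^{b}}$ per path, and summing $C^m\cdot m\,e^{-cR^{b}}$ over $m\ge N$ diverges for any fixed $R$; taking $R$ to grow with $m$ then poisons the scale-$R$ Peierls factor $(Ce^{-c\ell^2})^{m/R^2}$ because $\ell$ must be taken large \emph{after} $R$, while here $R$ depends on $m$. The way out is precisely to organize the scales hierarchically so that the decoupling error at scale $\lambda_n$ only has to beat a doubly exponential target $2^{-2^n}$ rather than a union over all paths — this is what the paper's Conditions \ref{cond:c1}--\ref{cond:c4} and Proposition \ref{prop:renorm_high_prob} accomplish, with the auxiliary events $\mathcal{B}^{(n)}$ absorbing the change of truncation scale between consecutive levels. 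Your heuristic (``footprint $2^k$ is more than compensated by probability super-exponentially small in $4^k$'') is the right intuition, and if you push it carefully it becomes a renormalization scheme in disguise; but as stated the bookkeeping is not there, and the naive Peierls you describe would not give the conclusion.
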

This result may be interpreted as a first step towards proving that $\alpha^f$ presents a phase transition that is non-degenerated. Indeed one would also expect that $\{\alpha^f>\ell\}$ contains an unbounded component when $\ell>0$ is small enough. It appears that the regime of small $\ell>0$ is much harder to understand. We prove such a result only for a discrete version of $\alpha^X$. This is the content of the second result of this paper.
More precisely, we denote by $(X_u)_{u\in \Z^2}$ the discrete field defined as,
\begin{equation}
    \label{eq:defX}
    \forall u\in \Z^2,\ X_u := f(u)
\end{equation}
We make additional assumptions on the law of $(X_u)_{u\in \Z^2}$.
\begin{assumption}
\label{a:q2}
We say that $X$ follows Assumption \ref{a:q2} for some parameters $(R,\delta)\in \N^*\times \R_+^*$  if the two following conditions are satisfied.
\begin{itemize}
    \item For any $u\in \mathbb{Z}^2$ we have $\var(X_u)=\mathbb{E}[X_u^2]=1.$ 
    \item We have, $$\sum_{u\in \Z^2,\ \norm{u}{\infty}\geq R}|\cov(X_u,X_0)| <\delta.$$
\end{itemize}
\end{assumption}
Observe that if $f=q\ast W$ satisfies Assumption \ref{a:q} for some $\beta>4$, then we have $|\cov(X_u,X_0)| =O\left(\norm{u}{}^{2-\beta}\right)$ and therefore, $$\sum_{u\in \Z^2,\ \norm{u}{\infty}\geq R}\cov(X_u,X_0)||=O\left(R^{4-\beta}\right).$$ This implies, that for any $\delta>0$, by replacing $f(x)$ by $\lambda_1f(\lambda_2 x)$ with a good choices of $\lambda_1,\lambda_2>0$ (depending on $\delta$), then Assumption \ref{a:q2} is verified with parameters $(1,\delta)$.

We introduce a discrete version of $\alpha^f$ which we call $\alpha^X : \Z^2 \to \overline{\R}$ which is defined by
\begin{equation}
    \label{def:alpha_intro_discrete}
    \forall u\in \Z^2,\ \alpha^X(u) := \sup_{r\in \mathbb{N}^*}\frac{X_{u+re_1}-X_u}{r} \in \overline{\R} 
\end{equation}

Our second theorem is as follows.
\begin{theorem}
\label{thm:discrete_correlated}
Let $f=q\ast W$, $X$ defined as in \eqref{eq:defX} and $\alpha_X$ defined by \eqref{def:alpha_intro_discrete}. In the context of site percolation on $\Z^2$, the following holds.
\begin{itemize}
    \item If $q$ satisfies Assumption \ref{a:q} for some $\beta>\frac{5}{2}$, then there exists $\ell_1\in ]0,\infty[$ such that, for any $\ell>\ell_1$, almost surely, the set $\{\alpha^X\leq \ell\}$ contains a unique infinite cluster, and the set $\{\alpha^X \geq \ell\}$ only contains finite clusters.
    \item There exists an absolute constant $\delta_0>0$ such that, if $q$ verifies Assumption \ref{a:q} for some $\beta>16$ and Assumption \ref{a:q2} for $(1,\delta_0)$, then there exists $\ell_2\in ]0,\infty[$ such that, for any $\ell<\ell_2$, almost surely, the set $\{\alpha^X>\ell\}$ contains a unique infinite cluster and the set $\{\alpha^X<\ell\}$ only contains finite clusters.
\end{itemize}
\end{theorem}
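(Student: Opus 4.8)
Both items are proved by a renormalisation argument, along the lines of the proof of Theorem~\ref{thm:continuous_l_grand}. One fixes a large side $N$ and a truncation length $T$, tiles $\Z^2$ by boxes of side $N$, and declares a box \emph{good} when a local certificate --- measurable with respect to $X$ on the box widened by $T$ in the direction $e_1$ --- forces the whole box to lie inside the dense phase; the point is that $\Proba{\text{good}}$ can be made arbitrarily close to $1$. The decay of correlations of $f$ (Assumption~\ref{a:q}, strengthened into the $\ell^1$-smallness of Assumption~\ref{a:q2} in the harder regime) yields a quantitative quasi-independence for $X$, hence a stochastic comparison of the good/bad box field with Bernoulli site percolation of parameter close to $1$, in both directions. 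Consequently the good boxes percolate, producing an infinite cluster of the dense phase (whose crossings patch up), the bad boxes have no infinite star-cluster, so the sparse phase has only finite clusters, ergodicity of the law of $f$ upgrades positive probability to almost sure occurrence, and uniqueness of the infinite cluster follows from a Burton--Keane type argument carried out at the level of the Gaussian field $f$ --- a detour made necessary by the fact that $\alpha^X$ is neither positively nor negatively associated.

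The local certificates rest on the elementary identities, valid for every $u\in\Z^2$ and every $T\in\N^*$,
\[
\{\alpha^X(u)\le\ell\}=\{X_u\ge M_\ell(u)\},\qquad M_\ell(u):=\sup_{r\ge 1}\bigl(X_{u+re_1}-\ell r\bigr),\qquad \{X_u<M_\ell^{\le T}(u)\}\subseteq\{\alpha^X(u)>\ell\},
\]
where $M_\ell^{\le T}(u):=\max_{1\le r\le T}(X_{u+re_1}-\ell r)\le M_\ell(u)$. Hence the sparse phase $\{\alpha^X\le\ell\}$ is always contained in the local set $\{u:X_u\ge M_\ell^{\le T}(u)\}$, the local set $\{u:X_u<M_\ell^{\le T}(u)\}$ is always contained in the dense phase $\{\alpha^X>\ell\}$, and everything reduces to choosing $N,T$ and the range of $\ell$ so that $\Proba{X_u\ge M_\ell^{\le T}(u)}$ is small enough.

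For the first item ($\ell$ large) this is easy even with $T$ a fixed constant: a union bound gives $\Proba{X_u\ge M_\ell^{\le T}(u)}\le\Proba{\alpha^X(u)\ge\ell}\le\sum_{r\ge 1}\exp(-\ell^2r^2/4)\to 0$ as $\ell\to\infty$, so the sparse phase is rare already at a fixed scale and one only needs the quasi-independence furnished by $\beta>\tfrac52$, exactly as in Theorem~\ref{thm:continuous_l_grand}. The second item ($\ell$ small) is where the real difficulty lies. Here $\{X_u\ge M_\ell^{\le T}(u)\}=\bigcap_{r=1}^{T}\{X_{u+re_1}-X_u\le\ell r\}$, which for $\ell T$ small is barely larger than the event $\{X_u=\max_{0\le r\le T}X_{u+re_1}\}$ that $X_u$ dominates the segment of length $T$ to its right, an event of probability of order $1/(T+1)$. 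So one is forced to take $T$ large (and then $\ell$ small, so that the slack $\ell r\le\ell T$ stays negligible against the typical gaps of the order statistics of that segment), and the smallness of $\Proba{X_u\ge M_\ell^{\le T}(u)}$ becomes a \emph{quantitative} assertion about the correlated Gaussian vector $(X_u,X_{u+e_1},\dots,X_{u+Te_1})$: essentially that, as far as ``which coordinate is the maximum'' is concerned, it behaves like an independent one. This is what $\beta>16$ (a decay fast enough to absorb the summations occurring in the decoupling over regions of size $\asymp N^2$) and Assumption~\ref{a:q2} for $(1,\delta_0)$ with $\delta_0$ absolutely small are for: after replacing $f(x)$ by $\lambda_1 f(\lambda_2 x)$ the total covariance mass outside range $1$ is tiny, so local events on disjoint regions factorise up to a controllable error, and one can simultaneously make the good-box probability close enough to $1$ and the decoupling errors small enough to render the renormalised field supercritical.

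The main obstacle is exactly this quantitative decoupling in the small-$\ell$ regime and its interplay with the combinatorics of the renormalisation: one must beat the probability $\approx 1/(T+1)$ that a site dominates its length-$T$ right-segment --- so $T$ has to be large --- while the decoupling error produced when separating the certificates of far-apart boxes along a circuit, an error governed by the mutual $\ell^1$-mass of the covariances between those regions, must be driven below the percolation threshold uniformly along the circuit, which forces both a very fast decay rate and an absolutely small correlation budget. Granting a decoupling inequality of this strength for $X$ --- of the type available for stationary Gaussian fields with fast decay of correlations --- together with the concentration of $M_\ell^{\le T}$ around its typical value, the remaining ingredients (the Peierls estimate, ergodicity, and the Burton--Keane argument) are routine.
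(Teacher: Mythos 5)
Your strategic outline is in the right spirit, and you correctly locate the heart of the matter: for small $\ell$ the certificate event $\{X_u\ge M_\ell^{\le T}(u)\}$ is barely larger than the event that $X_u$ is the maximum of its length-$T$ right-segment, of probability $\asymp 1/(T+1)$, and what is needed is a quantitative statement that the correlated Gaussian vector $(X_u,\dots,X_{u+Te_1})$ ``behaves like an independent one'' for the purposes of order statistics. But at exactly this point you \emph{grant} the key inequality rather than proving it, and you mischaracterise it as ``a decoupling inequality of the type available for stationary Gaussian fields with fast decay of correlations.'' It is not a decoupling inequality. The actual device in the paper (Lemma~\ref{lemma:order}) compares the probability of the Gaussian vector lying in any one of the permutation-ordering cones $F_A^\sigma$ with any other, by performing the orthogonal change of variables $x\mapsto P^\sigma x$ in the Gaussian integral and controlling the resulting distortion of the quadratic form by the ratio of extremal eigenvalues of $\Sigma_A$; Gershgorin's circle theorem then gives $\lambda_{\max}/\lambda_{\min}\le(1+\delta)/(1-\delta)$ under Assumption~\ref{a:q2}, and averaging over the $n_1!\cdots n_m!$ orderings produces the two-sided bound $\prod_i C(\delta)^{\pm n_i}/n_i!$. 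This is where the $\delta_0$ being an \emph{absolute} constant comes from. Your proposal contains none of this, so the central estimate that makes the $\rho^{|A|}$ Peierls bound (Proposition~\ref{prop:peierls_correl}) possible is missing.

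Two further deviations are worth noting. First, the Peierls bound itself is not obtained directly from the ordering lemma; the paper must partition the contour set $A$ row by row into $R$-connected blocks, treat the blocks of cardinality $\ge n_0$ via the factorial decay $C(\delta)^n/n!\le\rho^{2n}$, and treat the small blocks via a separate argument on the ``domination of a length-$R_0$ window'' event (giving the factor $1/R_0$). Without this dichotomy the estimate does not close, and your proposal does not mention it. Second, you invoke a Burton--Keane argument carried out ``at the level of $f$'' for uniqueness of the infinite cluster. The paper never uses Burton--Keane; uniqueness and absence of an unbounded cluster in the complement are both obtained from the exponentially-fast crossing estimates $H(\rho,\ell)$, $V(\rho,\ell)$ via the circuit/Borel--Cantelli gluing of Appendix~\ref{sec:appendixA}. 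Your Burton--Keane route would require establishing some form of insertion tolerance for $\alpha^X$, which is non-trivial precisely because $\alpha^X$ lacks the standard monotonicity and association properties; as written this is a second gap.
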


\paragraph{Comments on Theorem \ref{thm:continuous_l_grand} and Theorem \ref{thm:discrete_correlated}.} First, we observe that both Theorems \ref{thm:continuous_l_grand} and \ref{thm:discrete_correlated} aim to understand the phase transition of the new percolation model associated to $\alpha^f$ and $\alpha^X$. We first observe that we do not prove an equivalent of the second item of Theorem \ref{thm:discrete_correlated} for continuous field. Indeed, in the discrete case our methods for studying the regime of small $\ell$ rely on Assumption \ref{a:q2} which is an Assumption on small scales and that does not generalize well to the context of continuous fields. It is an open question to find conditions under which this second item of Theorem \ref{thm:discrete_correlated} could be generalised to the continuous setting. Another remark, is that although we stated Theorem \ref{thm:discrete_correlated} for a discretization on $\Z^2$, the same proof would apply for a discretization on other lattices such as the triangular lattice or the union-jack lattice. In particular, on the triangular lattice, it is an open question to understand whether one may take $\ell_1=\ell_2$ in Theorem \ref{thm:discrete_correlated}. This problem is related to the understanding of the sharpness of the phase transition. However, most of the usual methods either use the FKG inequality (see for instance \cite{Kes80}, \cite{BR06}, \cite{BDC12}) or strong symmetry assumption on the field (see for instance \cite{Riv21}, \cite{MRV20}). But it appears that our field $\alpha^f$ lacks all those nice properties.

\paragraph{Strategy of proof}
Both proofs of Theorem \ref{thm:continuous_l_grand} and Theorem \ref{thm:discrete_correlated} share some ideas. Indeed, the two proofs rely on building approximations of $\alpha^f$ and $\alpha^X$ that are well-behaved. This is crucial since one may see from \eqref{eq:alpha_intro} that even if $f$ had finite range correlations this would not be the case for $\alpha^f$. In Section \ref{sec:2} we construct such good local approximations of $\alpha^f$ with have finite range correlations. We also prove that the field $\alpha^f$ is continuous and takes values in $\R$ (not $\overline{\R}$). These finite range approximations will be useful when we use a renormalization scheme. Indeed, in Section \ref{sec:renorm} we present a technical tool which is known as a \textit{renormalization scheme}. The objective is to develop an argument showing that if the field $\alpha^f$ behaves well in boxes of a certain fixed size, then it will continue to behave well in bigger boxes. This renormalization argument has already been used before in a variety of arguments, including percolation arguments (see \cite{Szn12}, \cite{ARS14}, \cite{DCGRS23}, \cite{DCRRV23} for instance). In Section \ref{sec:4}, we use these tools with classical gluing constructions (which we recall in Appendix \ref{sec:appendixA}) to conclude the proof of Theorem \ref{thm:continuous_l_grand}. In Section \ref{sec:5}, we provide the proof of Theorem \ref{thm:discrete_correlated}. However contrary to the proof of Theorem \ref{thm:continuous_l_grand} we need additional care. Indeed, if we applied directly our argument as is, we would prove $\ell_2>-\infty$ instead of $\ell_2>0$. Instead we crucially use the definition of $\alpha^X$ (see \eqref{def:alpha_intro_discrete}) and we present an argument to control the probability that a collection of a random variable following a Gaussian distribution is ordered according to some particular permutation. These estimates may have independent interest and make it possible to conclude the proof of Theorem \ref{thm:discrete_correlated}.

\paragraph{Acknowledgements: } I am very grateful to my PhD advisor Damien Gayet who first presented me this model and offered me to study it. I am also grateful to him for reviewing a first version of this paper.

\section{Preliminaries}
\label{sec:2}
In this section, we introduce some tools we use in several places of the paper and we collect several intermediate results. These tools notably include approximations of the field $\alpha^f$ which have finite range correlations. More precisely we make the following definition.
\begin{definition}
\label{def:approximations}
Let $(\Omega, \mathcal{A}, \mathbb{P})$ be a probability space. A random variable on $(\Omega, \mathcal{A}, \mathbb{P})$ taking values in $\mathcal{F}(\R^2,\R)$ (the space of functions from $\R^2$ to $\R$) is called a \textit{random field}. Let $a,b>0$. Let $g$ be a random field. We say that $g$ admits a \textit{$(a,b)$-sequence of finite range approximations} if there exists a sequence $(g_R)_{R\in \N^*}$ of random fields together with positive constants $c,C$ such that:
\begin{itemize}
    \item For any $R\in \N^*$ and $D_1,D_2\subset \R^2$ with $d(D_1,D_2)\geq R$ then the two collections $(g_R(x))_{x\in S_1}$ and $(g_R(x))_{x\in S_2}$ are independent.
    \item For any $\varepsilon>0$, and for any $R\in \N^*$,
    \begin{equation}
        \sup_{x\in \R^2}\Proba{\sup_{ x+[0,1]^2}|g-g_R|>\varepsilon}\leq Ce^{-c\varepsilon^a R^b}.
    \end{equation}
\end{itemize}
\end{definition}

 Such finite range approximations were introduced to study percolation problems related to Gaussian field (see \cite{MV20}, \cite{Sev21}, \cite{MRV20}, \cite{DRRV23} for instance). For the convenience of the reader we briefly present how to construct these approximations for the Gaussian field $f.$ Consider a smooth function $\chi : \R^2 \to [0,1]$ having the following properties
\begin{itemize}
    \item $\chi$ is in $\mathcal{C}^\infty(\R^2).$
    \item If $\norm{x}{}\leq \frac{1}{4}$, then $\chi(x)=1.$
    \item If $\norm{x}{}\geq \frac{1}{2}$, then $\chi(x)=0.$
    \item For all $x\in \R^2$, $\max(|\chi(x)|, \norm{\nabla \chi(x)}{}) \leq 10.$
    \item The function $\chi$ is radial (that is, $\chi(x)$ only depends on $\norm{x}{}$).
\end{itemize}
Given some parameter $R\geq 1$, we define the function $\chi_R$ by
\begin{equation}
    \chi_R(x):= \chi\left(\frac{x}{R}\right).
\end{equation}
Recall that the Gaussian field $f$ is defined as
$$f=q\ast W,$$
where $W$ is the white noise.
We define the $R$-truncature of $f$ by
\begin{equation}
    \label{eq:def_fR}
    f_R := (q\chi_R)\ast W.
\end{equation}
It is easy to check that $f_R$ is also a continuous planar centered Gaussian field which is $R$-correlated. This means that if $D_1$ and $D_2$ are two subsets of $\R^2$ at distance at least $R$ from one each other, then the random collections $(f_R(x))_{x\in D_1}$ and $(f_R(x))_{x\in D_2}$ are independent. Moreover, since $\chi_R$ converges to $1$ as $R$ goes to infinity, it is expected that $f_R$ is a good local approximation of $f$. This is the case as stated in the following proposition.

\begin{proposition}[see \cite{MV20}, \cite{Sev21}]
\label{prop:severo}
Let $f=q\ast W$ where $q$ satisfies Assumption \ref{a:q} for some $\beta >1$. There exist constants $C,c>0$ (depending on $q$) such that the following holds.
For any $R\geq 1$ and any $\varepsilon>0$ we have
\begin{align}
    \Proba{\sup_{[0,1]^2}|f-f_R|>\varepsilon}\leq Ce^{-c\varepsilon^2R^{2\beta-2}}, \\
    \Proba{\sup_{[0,1]^2}\norm{\nabla f-\nabla f_R}{}>\varepsilon}\leq Ce^{-c\varepsilon^2R^{2\beta-2}}.
\end{align}
\end{proposition}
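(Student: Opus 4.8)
The plan is to prove Proposition \ref{prop:severo} by exploiting the fact that $f - f_R = (q(1-\chi_R))\ast W$ is a centered Gaussian field obtained by convolving the white noise against the ``tail kernel'' $q_R := q(1-\chi_R)$, which is supported outside the ball of radius $R/4$ and inherits good decay from Assumption \ref{a:q}. Since the supremum of a Gaussian field over a compact set has Gaussian concentration around its mean (by the Borell--TIS inequality), the two probabilities in question will follow from two ingredients: a bound on the expected supremum $\E{\sup_{[0,1]^2}|f - f_R|}$ (resp.\ for $\nabla f - \nabla f_R$), showing it tends to $0$ with $R$, and a bound on the maximal pointwise variance $\sup_{[0,1]^2}\var(f(x)-f_R(x))$ of order $R^{-(2\beta-2)}$. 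Combining these two gives a tail of the form $C\exp(-c(\varepsilon - o(1))^2/\sigma_R^2)$, which after absorbing the lower-order terms for $R$ large (and trivially adjusting constants for small $R$) yields the stated $Ce^{-c\varepsilon^2 R^{2\beta-2}}$.

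Concretely, the first step is to compute the covariance structure of $h := f - f_R$. One has $\E{h(x)h(y)} = (q_R \ast q_R)(x-y)$ and in particular $\var(h(x)) = \|q_R\|_{L^2}^2 = \int_{\|z\|\geq R/4} q(z)^2(1-\chi(z/R))^2\,dz \leq \int_{\|z\|\geq R/4} q(z)^2\,dz$. Using the decay bound $|q(z)| \leq C\|z\|^{-\beta}$ from Assumption \ref{a:q}, this integral is $O(R^{2-2\beta})$, which is exactly the exponent appearing in the proposition; the constant is uniform in $x$ by stationarity. For the gradient field, $\nabla f - \nabla f_R = \nabla(q_R) \ast W$ is again Gaussian and $\var(\partial_i h(x)) = \|\partial_i q_R\|_{L^2}^2$; here one must be slightly careful because $\partial_i q_R = (\partial_i q)(1-\chi_R) - q\,R^{-1}(\partial_i\chi)(\cdot/R)$, but both terms are supported in $\{\|z\|\geq R/4\}$ (the second in the annulus $R/4 \leq \|z\| \leq R/2$), and using the decay bounds on $\partial_i q$ and on $q$ itself (both available from Assumption \ref{a:q}), the $L^2$ norm squared is again $O(R^{2-2\beta})$. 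The factor $R^{-1}$ on the cutoff-derivative term only helps.

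The second step is to control the expected supremum. The cleanest route is a chaining / Dudley-type estimate: $h$ and its gradient are smooth Gaussian fields, and one can bound $\E{\sup_{[0,1]^2}|h|}$ in terms of the pointwise standard deviation $\sigma_R := \sup_x \var(h(x))^{1/2}$ and a modulus-of-continuity (or derivative-variance) bound, both of which are $O(R^{1-\beta})$ up to constants by the computation above and the analogous one for second derivatives of $q$ (which is why Assumption \ref{a:q} requires $q \in \mathcal{C}^4$ with four derivatives in $L^2$). This gives $\E{\sup_{[0,1]^2}|h|} \leq C' \sigma_R \leq C'' R^{1-\beta}$, and similarly for the gradient, using one more derivative of the kernel. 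Then Borell--TIS gives $\Proba{\sup_{[0,1]^2}|h| > \varepsilon} \leq \Proba{\sup_{[0,1]^2}|h| - \E{\sup_{[0,1]^2}|h|} > \varepsilon - C''R^{1-\beta}} \leq \exp\!\big(-(\varepsilon - C''R^{1-\beta})^2/(2\sigma_R^2)\big)$ whenever $\varepsilon \geq 2C'' R^{1-\beta}$; for such $\varepsilon$ the exponent dominates $c\varepsilon^2/\sigma_R^2 \geq c\varepsilon^2 R^{2\beta-2}$, and for the remaining range $\varepsilon < 2C''R^{1-\beta}$ one makes the inequality trivially true by taking $C$ large enough (since then $Ce^{-c\varepsilon^2 R^{2\beta-2}} \geq Ce^{-4cC''^2} \geq 1$ for appropriate constants). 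This case analysis, packaged as a single clean inequality with constants depending only on $q$, is the one mildly fiddly point.

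The main obstacle, such as it is, is not conceptual but bookkeeping: one must make sure the chaining estimate for the expected supremum is genuinely controlled by $\sigma_R$ with the right power of $R$, i.e.\ that the local fluctuation scale of $h$ over a unit box does not introduce a worse $R$-dependence than the pointwise variance. This is where the regularity in Assumption \ref{a:q} is used — the $L^2$ bounds on $\partial^\alpha q$ for $|\alpha| \leq 4$ ensure that $\var(\partial^\alpha h(x))$ is finite and, more importantly, also $O(R^{2-2\beta})$ for $|\alpha| \leq 2$ by the same annulus-integration argument, so every relevant quantity scales like $R^{1-\beta}$ and the chaining bound closes. Since this is exactly the computation already carried out in \cite{MV20} and \cite{Sev21} in closely related settings, I would cite those works for the general Gaussian-field input (Borell--TIS plus chaining) and only carry out the kernel-tail integral $\int_{\|z\|\geq R/4}(\partial^\alpha q)(z)^2\,dz = O(R^{2-2\beta})$ explicitly, as that is the one place Assumption \ref{a:q} enters quantitatively.
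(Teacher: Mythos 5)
The paper itself does not prove Proposition \ref{prop:severo}: it is stated as a known result and referred to \cite{MV20} and \cite{Sev21}, so there is no internal proof to compare against. Your proposal reproduces the standard argument from those references and is correct. The key points are all in place: $f-f_R=(q(1-\chi_R))\ast W$ is a centered Gaussian field whose kernel $q_R=q(1-\chi_R)$ is supported in $\{\|z\|\geq R/4\}$, the pointwise variance $\|q_R\|_{L^2}^2$ is bounded via the 2D tail integral $\int_{R/4}^\infty r^{1-2\beta}dr = O(R^{2-2\beta})$ (which is where $\beta>1$ enters), the same computation for $\partial^\alpha q_R$ with $|\alpha|\leq 2$ (using the pointwise decay from Assumption \ref{a:q}, plus the Leibniz term with $\nabla\chi_R$ supported in the annulus) controls the gradient and second-derivative variances, a Dudley chaining bound then shows $\E{\sup_{[0,1]^2}|h|}$ and $\E{\sup_{[0,1]^2}\|\nabla h\|}$ are both $O(R^{1-\beta})$, and Borell--TIS closes the argument, with the low-$\varepsilon$ regime $\varepsilon\lesssim R^{1-\beta}$ absorbed by enlarging $C$ since $\varepsilon^2R^{2\beta-2}$ is then bounded.

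One small calibration remark: for the first inequality (on $\sup|f-f_R|$) the chaining step needs a Lipschitz-type control on $h$ and hence the variance of $\nabla h$, so only decay of $\partial^\alpha q$ for $|\alpha|\leq 1$ is strictly used there; decay up to $|\alpha|=2$ is what you need for the second inequality (on $\sup\|\nabla f-\nabla f_R\|$). Your phrasing that the computation invokes ``second derivatives of $q$ \dots which is why Assumption \ref{a:q} requires $q\in\mathcal C^4$'' overshoots slightly: $\mathcal C^4$ regularity with $L^2$ control of four derivatives is imposed for the benefit of other statements in the paper (e.g.\ the $\nabla^2 f_R$ terms in Lemma \ref{lemma:trunc_1}); Proposition \ref{prop:severo} itself only uses decay of derivatives up to order $2$. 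This does not affect the correctness of your argument, just the attribution of which hypothesis does what.
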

Stationarity and Proposition \ref{prop:severo} show that if $\beta>1$ then $(f_R)_{R\in \N^*}$ is a $(2,2\beta-2)$-sequence of finite range approximations of $f$. In the following we aim to build a sequence of finite range approximation for the field $\alpha^f.$ First we note that Proposition \ref{prop:severo} is a consequence of the Borell-TIS inequality for Gaussian fields which we recall for the convenience of the reader.
\begin{proposition}[see \cite{AT07} for instance]
\label{prop:BORELL_TIS}
    Let $g$ be a centered Gaussian field on $\R^2$. Let $T\subset \R^2$. If $\sup_{x\in T} |g(x)|$ is finite almost surely then the following statements are verified:
    \begin{itemize}
        \item $\sigma_T^2:=\sup_{x\in T}\mathbb{E}[g(x)^2]<\infty,$
        \item $m_T:= \mathbb{E}[\sup_{x\in T}|g(x)|]<\infty,$
        \item $\forall u\in \R_+,\ \Proba{\sup_{x\in T}|g(x)|>m_T+u} \leq \exp\left(-\frac{u^2}{2\sigma_T^2}\right).$
    \end{itemize}
\end{proposition}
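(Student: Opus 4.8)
The plan is to derive the three conclusions from the Gaussian concentration phenomenon for Lipschitz functions of a standard Gaussian vector, after a standard reduction to a finite index set. First I would dispense with the absolute value by introducing the centered Gaussian process $\tilde g$ on $\tilde T:=T\times\{+1,-1\}$ defined by $\tilde g(x,\varepsilon):=\varepsilon\, g(x)$; then $\sup_{\tilde T}\tilde g=\sup_{x\in T}|g(x)|$ and $\sup_{\tilde T}\mathbb{E}[\tilde g^2]=\sigma_T^2$, so it suffices to prove the statement with $\sup_{x\in T}|g(x)|$ replaced by the signed supremum $M:=\sup_{x\in T}g(x)$ and with $m_T:=\mathbb{E}[M]$ (a priori in $(-\infty,+\infty]$, since $M\ge g(x_0)$ for any fixed $x_0\in T$, an integrable Gaussian). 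Next, $\sigma_T^2<\infty$: otherwise one can pick $x_n\in T$ with $\mathbb{E}[g(x_n)^2]\to\infty$, and since each $g(x_n)$ is Gaussian, $\mathbb{P}(|g(x_n)|>K)$ does not tend to $0$ for any fixed $K$, which forces $\sup_T|g|=+\infty$ with positive probability and contradicts the hypothesis. Finally, using separability of the process (automatic in all applications of interest here, where the relevant fields are continuous; in general one replaces $T$ by a countable cofinal subset), I reduce to $T$ countable, write $T=\bigcup_{n\ge 1}T_n$ with $T_n$ finite and increasing, and set $M_n:=\sup_{T_n}g$.

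For a finite set $T_n=\{x_1,\dots,x_k\}$ I would represent the Gaussian vector $(g(x_1),\dots,g(x_k))^{\mathsf T}$ as $AZ$, with $Z$ a standard Gaussian in some $\mathbb{R}^m$ and $AA^{\mathsf T}$ the covariance matrix. Then $M_n=\Phi_n(Z)$ where $\Phi_n(z):=\max_{i}\langle A^{\mathsf T}e_i,z\rangle$ is a maximum of linear forms, hence Lipschitz with constant $\max_i\|A^{\mathsf T}e_i\|=\max_i\sqrt{\var(g(x_i))}\le\sigma_T$. Applying the concentration inequality for $L$-Lipschitz functions of a standard Gaussian, $\mathbb{P}(|F(Z)-\mathbb{E}[F(Z)]|>u)\le 2e^{-u^2/(2L^2)}$ — which I would take from the Gaussian isoperimetric inequality (Borell, Sudakov--Tsirelson), or, for a self-contained proof, deduce from the Gaussian logarithmic Sobolev inequality via Herbst's argument or from a Maurey--Pisier smart-path interpolation — gives, uniformly in $n$,
\[
\mathbb{P}\big(M_n-\mathbb{E}[M_n]>u\big)\le e^{-u^2/(2\sigma_T^2)}\quad\text{and}\quad \mathbb{P}\big(|M_n-\mathbb{E}[M_n]|>u\big)\le 2e^{-u^2/(2\sigma_T^2)}\qquad(u\ge 0).
\]

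To pass to the limit, note that $M_n\uparrow M$ and $M_1$ is a maximum of finitely many Gaussians, so $\mathbb{E}[M_1]>-\infty$ and monotone convergence applied to $M_n-M_1\ge 0$ gives $\mathbb{E}[M_n]\uparrow\mathbb{E}[M]=m_T$, in particular $\mathbb{E}[M_n]\le m_T$ for all $n$. If $m_T=+\infty$ then $\mathbb{E}[M_n]\to\infty$ while, by the two-sided bound above, $M_n-\mathbb{E}[M_n]$ is tight; hence $M_n\to+\infty$ in probability, contradicting $M_n\uparrow M<\infty$ a.s. So $m_T<\infty$, which already gives the first two bullet points. For the third, for $u\ge 0$ the events $\{M_n>m_T+u\}$ increase to $\{M>m_T+u\}$ and $m_T-\mathbb{E}[M_n]+u\ge u\ge 0$, so
\[
\mathbb{P}(M>m_T+u)=\lim_{n\to\infty}\mathbb{P}\big(M_n-\mathbb{E}[M_n]>m_T-\mathbb{E}[M_n]+u\big)\le\lim_{n\to\infty}e^{-(m_T-\mathbb{E}[M_n]+u)^2/(2\sigma_T^2)}=e^{-u^2/(2\sigma_T^2)}.
\]
Undoing the reduction of the first paragraph delivers the three stated conclusions for $\sup_{x\in T}|g(x)|$.

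The main obstacle is not the algebra but the qualitative step of extracting the finiteness of $m_T=\mathbb{E}[\sup_T|g|]$ from the sole a.s.\ finiteness of $\sup_T|g|$: this is exactly why one must first establish the concentration bound \emph{uniformly in $n$} and only afterwards deduce $m_T<\infty$, so the order of the steps above is essential rather than cosmetic. A secondary technical point, harmless in the present setting because the fields in question are continuous, is the separability and measurability of $\sup_T g$ when $T$ is uncountable.
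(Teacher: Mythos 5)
The paper does not prove Proposition~\ref{prop:BORELL_TIS}: it is cited from \cite{AT07} as a classical result, so there is no in-paper argument to compare against. Your proof is a correct, self-contained derivation of the Borell--TIS inequality along the standard textbook route (essentially the one in Adler--Taylor): double the index set to remove the absolute value, reduce to countable $T$ by separability and then to finite $T_n$ by monotone approximation, realize the finite-dimensional supremum as a Lipschitz function of a standard Gaussian vector with Lipschitz constant at most $\sigma_T$, invoke Gaussian concentration for Lipschitz functionals (isoperimetry / log-Sobolev plus Herbst / smart path, any of which suffices), and pass to the limit. You correctly identify the one genuinely delicate point, namely that the finiteness of $m_T$ is a \emph{conclusion} rather than a hypothesis, and that one must therefore obtain the concentration bound uniformly in $n$ before invoking $m_T<\infty$; this is exactly the logical structure needed to match the way the proposition is stated in the paper.

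Two small remarks, neither a gap. First, in your argument that $\sigma_T^2<\infty$, if $\var(g(x_n))\to\infty$ then for every fixed $K$ one has $\Proba{|g(x_n)|>K}\to 1$, so in fact $\sup_T|g|=+\infty$ almost surely, not merely with positive probability; either conclusion contradicts the hypothesis, so the proof stands, but the sharper statement is cleaner. Second, ``countable cofinal subset'' is an unusual phrase for what you mean; the standard formulation is a countable separating (or dense) subset $T'\subset T$ with $\sup_T g=\sup_{T'}g$ almost surely, which exists under the separability assumption that is implicit whenever one speaks of the random variable $\sup_T|g|$ for uncountable $T$, and which is automatic for the continuous fields in this paper.
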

Other than Proposition \ref{prop:severo}, we also use Borell-TIS inequality in the form of the following statement.
\begin{proposition}
\label{prop:BORELL_TIS2}
    Assume that $f=q\ast W$, where $q$ satisfies Assumption \ref{a:q} for some $\beta >0$. There exists a constant $c>0$ such that for any $\alpha=(\alpha_1,\alpha_2)\in \mathbb{N}^2$ with $\alpha_1+\alpha_2\leq 2$, and for any $R\in [1,\infty]$,
    \begin{equation}
        \forall u\in \R_+,\ \Proba{\sup_{x\in [0,1]^2}{|\partial^\alpha f_R(x)|}>u}\leq \frac{1}{c}e^{-cu^2},
    \end{equation}
    with the convention $f_\infty=f.$
\end{proposition}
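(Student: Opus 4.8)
The plan is to apply the Borell--TIS inequality (Proposition~\ref{prop:BORELL_TIS}) to the centered Gaussian field $g:=\partial^\alpha f_R$ on the compact set $T:=[0,1]^2$, for a fixed multi-index $\alpha$ with $\alpha_1+\alpha_2\leq 2$ and a fixed $R\in[1,\infty]$. The whole point is to obtain, \emph{uniformly in $R$}, a bound $\sigma^2$ on the pointwise variance $\sigma_T^2:=\sup_{x\in T}\E{g(x)^2}$ and a bound $M$ on the expected supremum $m_T:=\E{\sup_{x\in T}|g(x)|}$ (the a.s. finiteness of $\sup_T|g|$ needed to invoke Borell--TIS will come for free from the estimates below). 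Once these are available, Borell--TIS yields $\Proba{\sup_T|g|>m_T+v}\leq e^{-v^2/(2\sigma^2)}$ for all $v\geq0$; taking $v=u-m_T$ and splitting into the ranges $u\geq 2M$ (so $(u-m_T)^2\geq u^2/4$) and $u<2M$ (so the probability is at most $1\leq e^{M^2/(2\sigma^2)}e^{-u^2/(8\sigma^2)}$) gives $\Proba{\sup_T|g|>u}\leq e^{M^2/(2\sigma^2)}e^{-u^2/(8\sigma^2)}$ for every $u\geq0$. Taking the worst of the finitely many resulting constants over the six multi-indices with $\alpha_1+\alpha_2\leq 2$, and renaming constants, produces the stated inequality $\frac1c e^{-cu^2}$ with $c$ depending only on $q$ and $\chi$.

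For the variance, set $h_R:=\partial^\alpha(q\chi_R)$. Since $f_R(x)=W\big((q\chi_R)(x-\cdot)\big)$ and $q\chi_R$ has $L^2$ derivatives up to order $4$, one may differentiate under $W$ to get $g(x)=W\big(h_R(x-\cdot)\big)$, and the defining property of white noise gives $\E{g(x)^2}=\norm{h_R}{L^2(\R^2)}^2$, which does not depend on $x$ by stationarity. By Leibniz's rule and the identity $\partial^\gamma\chi_R=R^{-|\gamma|}(\partial^\gamma\chi)(\cdot/R)$, together with $R\geq1$, one has the pointwise bound $|h_R|\leq C_\chi\sum_{\gamma\leq\alpha}|\partial^\gamma q|$ with $C_\chi$ depending only on $\chi$; since $\alpha_1+\alpha_2\leq 2\leq 4$, the right-hand side belongs to $L^2(\R^2)$ by the regularity part of Assumption~\ref{a:q}, whence $\sigma_T^2\leq C_\chi^2\big(\sum_{\gamma\leq\alpha}\norm{\partial^\gamma q}{L^2}\big)^2=:\sigma^2<\infty$, uniformly in $R$, and with the case $R=\infty$ (where $h_\infty=\partial^\alpha q$) covered by the same bound.

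For the expected supremum I would apply Dudley's entropy bound to the canonical metric $d_R(x,y)^2=\E{(g(x)-g(y))^2}=\norm{h_R(x-\cdot)-h_R(y-\cdot)}{L^2}^2$. Writing $h_R(x-z)-h_R(y-z)$ as an integral of $\nabla h_R$ along the segment from $y-z$ to $x-z$ and using Cauchy--Schwarz, then integrating in $z$ and changing variables for each value of the segment parameter, gives the uniform Lipschitz bound $d_R(x,y)\leq\norm{\nabla h_R}{L^2}\norm{x-y}{}$; and exactly as in the previous paragraph, Leibniz's rule bounds $\norm{\nabla h_R}{L^2}$ by a constant depending only on $\chi$ times $\sum_{|\gamma|\leq3}\norm{\partial^\gamma q}{L^2}<\infty$ (here $3\leq4$ is used), uniformly in $R$. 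Hence the $d_R$-covering numbers of $T$ are at most $C\varepsilon^{-2}$ with $C$ uniform in $R$; Dudley's bound then controls $\E{\sup_T(g-g(0))}$ by a universal multiple of $\int_0^{\mathrm{diam}(T,d_R)}\sqrt{\log(C\varepsilon^{-2})}\,d\varepsilon\leq M_0$, uniformly in $R$ (this also furnishes a.s. continuity of the sample paths, hence $\sup_T|g|<\infty$ a.s.), and symmetrising and adding $\E{|g(0)|}=\sqrt{2/\pi}\,\sigma$ gives $m_T\leq M$ uniformly in $R$. Feeding $\sigma$ and $M$ into the Borell--TIS step above then finishes the proof.

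The main obstacle is exactly this uniformity in $R$ of the bound on $m_T$: it forces one to go through a chaining/entropy argument rather than a soft compactness argument, to use the full strength of the $\mathcal{C}^4$-with-$L^2$-derivatives hypothesis of Assumption~\ref{a:q} (the third-order $L^2$-derivatives of $q$ enter through the chaining metric), and to check at each step that the constants coming out of Leibniz's rule depend on $\chi$ only and not on $R$, which is where $R\geq1$ is essential. Everything else is a routine instance of Borell--TIS.
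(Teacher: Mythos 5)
Your proof is correct, and it takes a genuinely different route from the paper's for the one non-trivial step, namely the uniformity in $R$ of the constants entering Borell--TIS. The paper's own proof is soft: it fixes $\alpha=(0,0)$, applies Borell--TIS to $f_R$ on $T=[0,1]^2$ (finiteness of $\sup_T|f_R|$ being immediate from continuity and compactness), and then observes via Proposition~\ref{prop:severo} that $\sup_T|f-f_R|\to 0$ a.s.\ and in $L^2$ as $R\to\infty$, hence $m_T(R)\to m_T(\infty)$ and $\sigma_T(R)\to\sigma_T(\infty)$, from which it concludes that uniform constants $m,\sigma$ can be chosen. You instead produce explicit, $R$-uniform bounds: $\sigma_T^2=\norm{\partial^\alpha(q\chi_R)}{L^2}^2$ is controlled by Leibniz's rule together with $\partial^\gamma\chi_R=R^{-|\gamma|}(\partial^\gamma\chi)(\cdot/R)$ and $R\geq 1$, and $m_T$ is controlled through a uniform Lipschitz bound on the canonical metric (again Leibniz, now touching third-order $L^2$-derivatives of $q$) fed into Dudley's entropy integral. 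Both arguments are valid. The paper's is shorter but leans on Proposition~\ref{prop:severo} and glosses over why the convergence at $R\to\infty$ yields a bound uniform over the whole range $[1,\infty]$ (implicitly using continuity/boundedness of $R\mapsto m_T(R)$ on compact $R$-intervals); yours is more work but is self-contained, makes the role of the $\mathcal{C}^4$/$L^2$-derivative hypothesis and of the normalization $R\geq 1$ completely explicit, and does not require Proposition~\ref{prop:severo} at all.
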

\begin{proof}
To ease notations, we do the proof for $\alpha=(0,0)$ but it is straightforward to extend the proof to $\alpha$ with $|\alpha|=\alpha_1+\alpha_2\leq 2.$
Let $T=[0,1]^2$. Let $R\in [1,\infty]$. We denote by $\sigma_T(R)$ and $m_T(R)$ the quantities,
$$\sigma_T(R) := \sup_{x\in T}\mathbb{E}\left[f_R(x)^2\right],$$
$$m_T(R) := \mathbb{E}\left[\sup_{x\in T}|f_R(x)|\right].$$
We observe that almost surely, $f_R$ is continuous on $T$ which is compact. This implies that $\sup_T|f_R|$ is almost surely finite and by Proposition \ref{prop:BORELL_TIS} we find,
\begin{equation}
    \forall R\in [1,\infty],\ \forall u\in \R_+,\ \Proba{\sup_{x\in [0,1]^2}|f_R(x)|>m_T(R)+u} \leq \exp\left(\frac{-u^2}{2\sigma_T(R)^2}\right).
\end{equation}
Now we observe that $\sup_{t\in [0,1]^2}|f(x)-f_R(x)|\xrightarrow[R\to \infty]{}0$ almost surely and in the $L^2$ sense (this follows for instance from Proposition \ref{prop:severo}). Therefore, we have $m_T(R)\xrightarrow[R\to \infty]{}m_T(\infty)$ and $\sigma_T(R)\xrightarrow[R\to \infty]{}\sigma_T(\infty).$
We may therefore find positive constants $m,\sigma$ such that
\begin{equation}
    \forall R\in [1,\infty],\ \forall u\in \R_+,\ \Proba{\sup_{x\in [0,1]^2}|f_R(x)|>m+u} \leq \exp\left(\frac{-u^2}{2\sigma^2}\right).
\end{equation}
Adjusting constants we get the conclusion.
\end{proof}
Before building a sequence of finite range approximations for $\alpha^f$ we provide some regularity results on $\alpha^f$. More precisely, we prove the following proposition
\begin{proposition}
    \label{prop:continuity}
    Let $f=q\ast W$ where $q$ satisfies Assumption \ref{a:q} for some $\beta>1.$
    Almost surely, the function $z\mapsto \alpha^f(z)$ is well defined, takes values in $\mathbb{R}_+^*$ and is continuous over $\R^2$.
\end{proposition}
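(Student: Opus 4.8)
The plan is to establish the three assertions — well-definedness, positivity, and continuity — in that order, exploiting the finite-range approximations $f_R$ only where needed and otherwise working directly with $f$ via the Gaussian regularity estimates of Proposition \ref{prop:BORELL_TIS2}.

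First I would show that $\alpha^f(z)$ is finite and positive almost surely, uniformly for $z$ in a fixed box. For finiteness from above: by definition $\alpha^f(z)=\sup_{r>0}\frac{f(z+re_1)-f(z)}{r}$, so I need an a priori bound on how fast $f$ can grow along the $e_1$-direction. Using Proposition \ref{prop:BORELL_TIS2} applied on a grid of unit boxes covering the segment $[z,z+re_1]$, together with a Borel--Cantelli argument, one gets that almost surely $|f(z+re_1)-f(z)|\leq C(\omega)(1+\sqrt{r\log r})$ for all $r\geq 1$ and all $z$ in a compact set; dividing by $r$ shows the $\sup$ over $r\geq 1$ is finite. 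For small $r$, the $\mathcal C^1$ regularity of $f$ (a consequence of Assumption \ref{a:q} and Proposition \ref{prop:BORELL_TIS2} applied to $\nabla f$) bounds $\frac{f(z+re_1)-f(z)}{r}$ by $\sup_{[0,1]^2}\|\nabla f\|$, which is a.s. finite. Hence $\alpha^f(z)<\infty$ a.s. For positivity: since $\alpha^f(z)\geq \limsup_{r\to 0^+}\frac{f(z+re_1)-f(z)}{r}=\partial_1 f(z)$, and also $\alpha^f(z)\geq -\partial_1 f(z)$ by considering... wait — only the positive direction $re_1$ with $r>0$ is allowed, so we only directly get $\alpha^f(z)\geq \partial_1 f(z)$. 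To get $\alpha^f(z)>0$ strictly I would argue: if $\partial_1 f(z)>0$ we are done; if $\partial_1 f(z)\le 0$, then since $f$ is non-constant and stationary, with probability one $f$ is not monotone non-increasing along the whole ray $z+\R_+ e_1$, so there exist $r_1<r_2$ with $f(z+r_2 e_1)>f(z+r_1 e_1)$; combining with continuity one finds some $r>0$ with $f(z+re_1)>f(z)$, giving $\alpha^f(z)>0$. Making this uniform in $z$ (so that it holds simultaneously for all $z$, not just a.s. for each fixed $z$) is the delicate point and I expect it to require a covering/compactness argument together with a quantitative non-degeneracy estimate for the Gaussian vector $(f(z),f(z+e_1))$ whose variance is bounded below uniformly in $z$ by stationarity.

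Next I would prove continuity. The key observation is that for fixed $r>0$ the map $z\mapsto \frac{f(z+re_1)-f(z)}{r}$ is continuous (indeed $\mathcal C^1$), so $\alpha^f$ is a supremum of continuous functions, hence lower semicontinuous automatically. For upper semicontinuity I would use the a priori growth bound from the first step to restrict the supremum to a compact range of $r$: almost surely, for $z$ in a fixed box, the supremum defining $\alpha^f(z)$ is attained (or approached) on $r\in(0,r_0(\omega)]$ for some finite $r_0$, because for $r$ large $\frac{f(z+re_1)-f(z)}{r}\leq \frac{C(\omega)(1+\sqrt{r\log r})}{r}$ which is eventually smaller than, say, half the uniform positive lower bound on $\alpha^f$ obtained in the positivity step. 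On the compact set $\{(z,r): z\in K,\ 0<r\le r_0\}$... one must still handle $r\to 0^+$, where the difference quotient converges uniformly to $\partial_1 f$ by $\mathcal C^2$ regularity of $f$; so on the genuinely compact set $\{r\in[\delta,r_0]\}$ the function $(z,r)\mapsto \frac{f(z+re_1)-f(z)}{r}$ is uniformly continuous, giving continuity of the partial sup, and the contributions from $r<\delta$ are controlled by $\sup_{[0,1]^2}\|\nabla f\|$ plus an error going to $0$ with $\delta$. Taking $\delta\to 0$ yields continuity of $\alpha^f$ on $K$, and since $K$ is arbitrary, on all of $\R^2$.

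The main obstacle, as flagged above, is upgrading the pointwise-a.s. statements (finiteness, strict positivity) to statements that hold almost surely \emph{simultaneously} for all $z\in\R^2$, and in particular obtaining the \emph{uniform} positive lower bound on $\alpha^f$ over compact sets that is needed to truncate the range of $r$ in the continuity argument. I expect this to be handled by combining: (i) a chaining/Borel--Cantelli bound using Proposition \ref{prop:BORELL_TIS2} to control $\sup_{z\in K}\sup_{r\le r_0}|f(z+re_1)-f(z)|$ and the modulus of continuity of $\nabla f$; and (ii) a uniform small-ball / anti-concentration estimate for $f$ near a point $z$ (e.g. a lower bound on $\var(f(z+e_1)-f(z))$ uniform in $z$, which follows from stationarity provided $q*q$ is not constant, together with a union bound over a fine grid and the $\mathcal C^1$ modulus to interpolate between grid points). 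These are standard for smooth stationary Gaussian fields but require some care to assemble.
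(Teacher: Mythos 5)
Your overall architecture — establish a growth bound via Borell–TIS and Borel–Cantelli, deduce finiteness, establish strict positivity, use lower semi-continuity (as a sup of continuous functions) plus a compactness argument to bound the relevant range of $r$, and then deduce full continuity — matches the paper's Lemma \ref{lemma:technical} and the proof of Proposition \ref{prop:continuity} in spirit.

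However, your positivity step has a genuine gap. You argue: ``since $f$ is not monotone non-increasing along the ray $z+\R_+e_1$, there exist $r_1<r_2$ with $f(z+r_2e_1)>f(z+r_1e_1)$; combining with continuity one finds some $r>0$ with $f(z+re_1)>f(z)$.'' The second inference does not follow. Non-monotonicity along the ray only produces a local increase \emph{somewhere} on the ray; it does not produce a point $z+re_1$ at which $f$ exceeds its value at $z$ itself. For instance, if $f(z)$ happened to be the maximum of $f$ along the ray, the ray could still be non-monotone (decrease, bump up a bit, decrease again) without ever returning above $f(z)$, and then $\alpha^f(z)\le 0$. What is really needed is the stronger statement that almost surely $\sup_{r>0} f(z+re_1)=+\infty$, and the paper obtains exactly this: by stationarity the events $\mathcal{B}_{k,m}=\{\inf_{T_k}f\ge m\}$ all have the same positive probability, and by ergodicity (which follows from the decay-of-correlations part of Assumption \ref{a:q}) infinitely many of them occur for every $m$; choosing $m>\sup_{[0,1]^2}|f|$ then gives an $r$ with $f(z+re_1)>f(z)$ simultaneously for all $z\in[0,1]^2$. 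This also resolves the uniformity issue you flag. Your proposed alternative tool — a small-ball or anti-concentration bound on $\var(f(z+e_1)-f(z))$ — would not help: it yields $\Proba{f(z+e_1)>f(z)}\ge c>0$ for a fixed $z$, which is nowhere near the almost-sure, uniform-in-$z$ statement you need; a union bound over a grid cannot upgrade a probability bounded away from $1$ to probability $1$.

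One further remark: the paper's continuity argument does not actually require the uniform positive lower bound on $\alpha^f$ over compacts that you identify as the delicate ingredient. Instead it proves (item 5 of Lemma \ref{lemma:technical}) that the minimizing radius $r(z)$ is bounded on $[0,1]^2$, by contradiction: if $r(z_n)\to\infty$ then the growth bound forces $\alpha^f(z_n)\to 0$, and lower semi-continuity then gives $\alpha^f(z)\le 0$ at a subsequential limit, contradicting strict positivity \emph{at the single limit point $z$}. Continuity then follows from a direct compactness argument along the sequence $r(z_n)$ rather than from a quantitative lower bound on $\alpha^f$. This is both cleaner and sidesteps the uniformity difficulty you anticipated.
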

Let us introduce some notations. Given a $\mathcal{C}^1(\R^2,\R)$ function $f$, given a point $z\in \R^2$ and $r\in \R_+$ we define
\begin{equation}
    \label{eq:def_tau}
    \tau_f(z,z+re_1):= \begin{cases}
    \frac{f(z+re_1)-f(z)}{r}& \text{ if }r>0, \\
    \scal{\nabla f(z)}{e_1} &\text{ if }r=0.
\end{cases}
\end{equation}
For $z=(x,y)\in \R^2$ we see that definition \eqref{eq:alpha_intro} of $\alpha^f$ is equivalent to the following,
\begin{equation}
    \alpha^f(z)=\alpha^f(x,y) = \sup_{r\in \R_+}\tau_f(z,z+re_1).
\end{equation}
We first prove the following technical result.
\begin{lemma}
    \label{lemma:technical}
    Let $f=q\ast W$, where $q$ satisfies Assumption \ref{a:q} for some $\beta >1$.
    Almost surely the following holds:
    \begin{enumerate}
        \item For any $(u,v)\in \Z^2$ there exists a random positive constant $C$ such that for all $(x,y)\in (u,v)+[0,\infty[\times [0,1]$, $|f(x,y)|\leq C\sqrt{x-u+1}.$
        \item For any $z\in \R^2$ there exists $r>0$ such that $\tau_f(z,z+re_1)>0.$
        \item For any $z\in \R^2$ the set $R(z) := \{r\in [0,\infty[, \alpha^f(z)=\tau_f(z,z+r(z)e_1)\}$ is non empty. Moreover if $r(z) := \inf R(z)$ then $r(z)\in R(z)$.
        \item The function $\alpha^f : \R^2\to \R$ is lower-semi-continuous (that is $\forall z\in \R^2, \liminf_{z_n\to z}\alpha^f(z_n)\geq \alpha^f(z).$) 
        \item For any $(u,v)\in \Z^2$, there exists a random positive constant $C$ such that for all $(x,y)\in (u,v)+[0,1]^2$, then $r(z)\leq C.$
    \end{enumerate}
\end{lemma}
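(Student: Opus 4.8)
The plan is to prove the five items roughly in the stated order, since the later ones build on the earlier ones. For item (1), I would use Proposition \ref{prop:BORELL_TIS2} (the Borell--TIS tail bound for $\sup_{[0,1]^2}|f|$) on each translated unit square $(u+k,v)+[0,1]^2$ for $k\in\N$, together with stationarity. The tail is Gaussian, so $\Proba{\sup_{(u+k,v)+[0,1]^2}|f|>t\sqrt{k+1}}\le c^{-1}e^{-ct^2(k+1)}$; summing over $k$ gives a convergent series for $t$ large enough (Borel--Cantelli), which yields a random $C$ with $|f(x,y)|\le C\sqrt{x-u+1}$ on the whole half-strip. Item (2) is essentially immediate from the fact that $f$ is a.s.\ a non-constant $\mathcal C^1$ Gaussian field: at a fixed $z$, either $\scal{\nabla f(z)}{e_1}>0$ (done, take $r=0$) or not, but in any case $\alpha^f(z)\ge \tau_f(z,z+re_1)$ and since $f$ restricted to the line $z+\R e_1$ is a.s.\ not identically decreasing — one can argue via the a.s.\ positivity at a single point using that $f(z+re_1)-f(z)$ is a non-degenerate Gaussian for each $r>0$, hence positive with positive probability, and then invoke a density/continuity argument, or simply note $\E[f(z+re_1)f(z)]=(q*q)(re_1)\to 0$ so for large $r$ the increment is close to an independent Gaussian and is positive with probability bounded below, forcing the sup to be positive a.s. I expect item (2) to require a short but careful argument rather than a one-liner.

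The heart of the lemma is item (3), the existence of a minimizing radius $r(z)$ realizing the supremum. Here I would combine items (1) and (2): by item (2) the supremum $\alpha^f(z)$ is $>0$; by item (1), for $(x,y)=z$ in a fixed square we have $\tau_f(z,z+re_1)=\frac{f(z+re_1)-f(z)}{r}\le \frac{C\sqrt{r+\text{const}}+|f(z)|}{r}\to 0$ as $r\to\infty$. Hence the supremum over $r\in[0,\infty[$ is attained on a compact interval $[0,M]$ (with $M=M(z)$ random), and since $r\mapsto \tau_f(z,z+re_1)$ is continuous on $[0,M]$ (continuity at $r=0$ is built into the definition \eqref{eq:def_tau} via $\nabla f$, using $f\in\mathcal C^1$), the sup is a genuine max: $R(z)\ne\emptyset$. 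That $r(z):=\inf R(z)$ again lies in $R(z)$ follows because $R(z)$ is closed (preimage of $\{\alpha^f(z)\}$ under the continuous map $r\mapsto\tau_f(z,z+re_1)$, intersected with the compact $[0,M]$), so it contains its infimum. Item (5) is then a quantitative strengthening of the same compactness argument, uniform over a unit square: using item (1) one gets, for $z=(x,y)\in(u,v)+[0,1]^2$, that $\tau_f(z,z+re_1)\le \frac{2C\sqrt{r+2}}{r}$ for $r$ large, while $\alpha^f(z)\ge \tau_f(z,z+r_0e_1)$ for a suitable fixed $r_0$ depending only on the (bounded, by continuity on a compact) behaviour of $f$ on $(u,v)+[0,2]\times[0,1]$ — so there is a uniform threshold $C'$ beyond which $\tau_f$ can never achieve the sup, giving $r(z)\le C'$.

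For item (4), lower semicontinuity of $\alpha^f$, I would argue directly from the definition: $\alpha^f$ is a supremum of the functions $z\mapsto\tau_f(z,z+re_1)$ over $r\in[0,\infty[$, and each of these is continuous in $z$ (again using $f\in\mathcal C^1$ and the definition \eqref{eq:def_tau} at $r=0$); a pointwise supremum of continuous functions is lower semicontinuous, which is exactly the stated $\liminf$ inequality. The main obstacle I anticipate is item (2): showing that the supremum defining $\alpha^f$ is \emph{strictly} positive almost surely and simultaneously for all $z$ (not just for a fixed $z$) requires either a union-bound-plus-continuity argument or an ergodicity/stationarity input, and one must be a little careful to upgrade "a.s.\ for each fixed $z$" to "a.s.\ for all $z$"; here the lower semicontinuity from item (4) together with the fact that $\alpha^f>0$ a.s.\ on a dense countable set, plus a separate argument ruling out that $\alpha^f(z)=0$ at the exceptional points, should close the gap. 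The a.s.\ statements are then assembled by taking a countable intersection of full-measure events indexed by $(u,v)\in\Z^2$.
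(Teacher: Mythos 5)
Your items (1), (3) and (4) are sound, and item (4) is a bit cleaner than the paper's (you invoke the general fact that a pointwise supremum of continuous functions is lower semicontinuous, whereas the paper routes through item (3) and attainment of the sup). But there is a genuine gap in item (2), and it matters because (3) and (5) both lean on it.

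You propose to deduce positivity of $\sup_r\tau_f(z,z+re_1)$ from the fact that each increment $f(z+re_1)-f(z)$ is, for large $r$, close to an independent centered Gaussian and hence positive with probability bounded below. That does not "force the sup to be positive a.s.": the increments are dependent, and a uniform lower bound on the marginal probabilities says nothing about the joint event that all of them are simultaneously nonpositive. You then acknowledge that upgrading "a.s.\ for each fixed $z$" to "a.s.\ for all $z$" is delicate and propose density on a countable set plus lower semicontinuity plus "a separate argument ruling out $\alpha^f(z)=0$ at exceptional points" — but lsc is the wrong direction for this (it only gives $\alpha^f(z)\le\liminf_n\alpha^f(z_n)$, which does not exclude $\alpha^f(z_0)=0$ with $\alpha^f(z_n)\downarrow 0$ along a dense sequence), and the "separate argument" is exactly what is missing. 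The paper's route is different and closes both difficulties at once: by stationarity the events $\mathcal{B}_{k,m}=\{\inf_{(k,0)+[0,1]^2}f\ge m\}$ all have the same positive probability, so by ergodicity of the translation flow, for every fixed $m$, a.s.\ infinitely many of them occur; then choosing $m>\sup_{[0,1]^2}|f|$ and any such $k$ gives $f(z+ke_1)\ge m>f(z)$ for \emph{every} $z\in[0,1]^2$ simultaneously, and a countable union over integer translates extends this to all of $\R^2$. This ergodicity input is the key idea your proposal lacks.

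The same gap surfaces in your item (5): you claim $\alpha^f(z)\ge\tau_f(z,z+r_0e_1)$ for a single $r_0$ working uniformly over $z$ in a unit square, but with your version of item (2) there is no reason the witnessing $r$ is uniform in $z$. The paper avoids this by arguing by contradiction: if $r(z_n)\to\infty$ with $z_n$ in a fixed square, then the bound from item (1) forces $\alpha^f(z_n)\to 0$, a subsequential limit $z$ exists by compactness, and lsc (item (4)) gives $\alpha^f(z)\le 0$, contradicting item (2). That argument does not need a uniform $r_0$ and you would do well to adopt it. (Incidentally, with the paper's ergodicity argument the uniform $r_0$ you want \emph{is} available, since the witnessing $k$ in $\mathcal{B}_{k,m}$ is independent of $z\in[0,1]^2$; but that is only true once item (2) is proved the paper's way.)
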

\begin{proof}
    The first point is an application of the Borell-TIS inequality. Let $T_k = (k,0)+[0,1]^2$. Denote by $\mathcal{A}_k$ the event
    $$\mathcal{A}_k = \{\sup_{t\in T_k}|f(t)|>\sqrt{k}\}.$$
    By stationarity and applying Proposition \ref{prop:BORELL_TIS2}, we see that
    $$\sum_{k\geq 0}\mathbb{P}(\mathcal{A}_k)<\infty.$$
    Therefore, by the Borel-Cantelli lemma, we see that there exists an event $\mathcal{G}$ of probability $1$ under which there exists (a random) $k_0\geq 0$ such that for all $k\geq k_0$ we have $\sup_{t\in T_k}|f(t)|\leq \sqrt{k}$. This implies that
    $$\forall (x,y)\in [k_0,\infty[\times [0,1], |f(x,y)|\leq \sqrt{x+1}.$$
    Since $f$ is continuous on $\R^2$ and since $[0,k_0]\times [0,1]$ is bounded we may find a (random) constant $C>0$ such that
    $$\forall (x,y)\in [0,k_0]\times [0,1], f(x,y)\leq C \leq C\sqrt{x+1}.$$
    Altogether, by replacing $C$ by $\max(C,1)$ we get that almost surely
    $$\exists C>0,\forall (x,y)\in [0,\infty[, |f(x,y)|\leq C\sqrt{x+1}.$$
    Let $(u,v)\in \R^2$ denote by $\mathcal{G}_{u,v}$ the event
    $$\mathcal{G}_{u,v}:= \{\exists C>0, \ \forall (x,y)\in (u,v)+[0,\infty[\times [0,1], |f(x,y)|\leq C\sqrt{x-u+1}\}.$$
    We have shown that $\mathcal{G}_{0,0}$ has probability $1$, and therefore (by stationarity) all events $\mathcal{G}_{u,v}$ have probability $1$. On the intersection $\mathcal{G}^{(1)} := \bigcap_{u,v\in \Z^2}\mathcal{G}_{u,v}$ we have the first element of the lemma.

    We now provide the proof of the second item. For $k,m\in \N$ let us introduce the following events
    $$\mathcal{B}_{k,m} := \left\{\inf_{t\in T_k}f(t)\geq m\right\},$$
    where we recall that $T_k=(k,0)+[0,1]^2.$
    By stationarity, for fixed $m$ then all $\mathcal{B}_{k,m}$ have the same positive probability. Therefore, by ergodicity, for all fixed $m$ there are infinitely many values of $k\in \N$ such that the event $\mathcal{B}_{k,m}$ occurs. Denote by $\mathcal{B}_m$ the event
    $$\mathcal{B}_m:= \bigcup_{k\in \N^*}\mathcal{B}_{k,m}.$$
    Then, all $\mathcal{B}_m$ have probability one. Consider $\mathcal{G}^{(2)}:= \bigcap_{m\in \N^*} \mathcal{B}_m$. Then on this event, let $z\in [0,1]^2$, take $m>\sup_{t\in [0,1]^2}|f(t)|\geq f(z)$, on the event $\mathcal{G}^{(2)}$ some $\mathcal{B}_{k,m}$ must occur, if we take any $z+ke_1\in T_k$ we have $f(z+ke_1)\geq m$ (by the definition of the event $\mathcal{B}_{k,m})$ this shows that $\tau_f(z,z+ke_1)>0.$ We have proven that on the event $\mathcal{G}^{(2)}$ we have
    $$\forall z\in [0,1]^2,\exists r\geq 0, \tau_f(z,z+re_1)>0.$$
    It is straightforward to generalize this for all $z\in \R^2$ by covering $\R^2$ by countably many boxes of the form $(u,v)+[0,1]^2$ and using the stationarity of the field.

    We now turn to the proof of the third item. We work under the events $\mathcal{G}^{(1)}\cap \mathcal{G}^{(2)}$. Let $z=(x,y)\in \R^2$, by the first item we have
    $$\tau_f(z,z+re_1)=\frac{f(z+re_1)-f(z)}{r}=O\left(\frac{1}{\sqrt{r}}\right)\xrightarrow[r\to \infty]{}0.$$
    However by the second point we have some $r_0\geq 0$ such that
    $$\tau_f(z,z+r_0e_1)>0.$$ This means that we have some $R_0$ fixed (depending on $z$) such that $$\alpha^f(z)=\sup_{0\leq r\leq R_0}\tau_f(z,z+re_1).$$ Since the function $r\mapsto \tau_f(z,z+re_1)$ is continuous and since $[0,R_0]$ is compact, we deduce that there exists some $r\in [0,R_0]$ such that $\alpha^f(z)=\tau_f(z+re_1)$. This shows that the set $R(z)$ is non empty. If we set $r(z)=\inf R(z)$, then by continuity of $r\mapsto \tau_f(z,z+re_1)$ we see that $\alpha^f(z)=\tau_f(z,z+r(z)e_1)$. This concludes the proof of the third item.

    For the fourth item, take any sequence $(z_n)_n$ that converges to $z\in \R^2$. By the third item, there exists some $r\geq 0$ such that $\alpha^f(z)=\tau_f(z,z+re_1)$. Since $z\mapsto \tau_f(z,z+re_1)$ is continuous, we see that
    $$\liminf_{n\to \infty}\tau_f(z_n,z_n+re_1)\geq \tau_f(z,z+re_1)=\alpha^f(z,z+re_1).$$ Since we have $\alpha^f(z_n)\geq \tau_f(z_n,z_n+re_1)$ we get the conclusion.

    For the fifth item, we do the proof for $(u,v)=(0,0)$ (by stationarity it is enough). Assume by contradiction that we have a sequence $z_n\in [0,1]^2$ such that $r(z_n)\xrightarrow[n\to \infty]{}+\infty$. We then argue that $\alpha^f(z_n)\xrightarrow[n\to \infty]{}0$. In fact, under the event $\mathcal{G}^{(1)}$ we have $$\alpha^f(z_n) \leq \frac{C\sqrt{r_n+2}-f(z_n)}{r_n} \leq \frac{C\sqrt{r_n+2}}{r_n}+\frac{\sup_{z\in [0,1]}|f(z)|}{r_n}\xrightarrow[n\to \infty]{}0.$$
    By compactness of $[0,1]^2$ we may assume that the sequence $(z_n)$ converges to some $z\in [0,1]^2$. By lower semi-continuity of $\alpha^f$ we get $\alpha^f(z)\leq \liminf \alpha^f(z_n)\leq 0$, this is in contradiction with the second item.
\end{proof}
We now prove Proposition \ref{prop:continuity}.
\begin{proof}[Proof of Proposition \ref{prop:continuity}.]
    By Lemma \ref{lemma:technical}, we already know that almost surely, $\alpha^f$ is well defined on $\R^2$ takes values in $\R_+^*$ and is lower semi continuous. It only remains to show that almost surely $\alpha^f$ is continuous. We will show that almost surely $\alpha^f$ is continuous on $]0,1[^2$. By contradiction assume there exists $z\in ]0,1[^2$ and a sequence $(z_n)_n$ of points of $[0,1]^2$ such that $z_n \xrightarrow[n\to \infty]{}z$ but $\alpha^f(z_n) \not\xrightarrow[n\to \infty]{}\alpha^f(z)$. Then since $\alpha^f$ is lower semi-continuous it must be the case that there exists $\varepsilon_0>0$ such that $\liminf_{n\to \infty}\alpha^f(z_n)\geq \alpha^f(z)+\varepsilon_0.$ Without loss of generality we may assume that $\alpha^f(z_n)\xrightarrow[n\to \infty]{}\ell$ with $\ell\in [\alpha^f(z)+\varepsilon_0,\infty].$ Recall the notation $r(z_n)$ introduced in Lemma \ref{lemma:technical}. By the fifth item of Lemma \ref{lemma:technical} we have a constant $C>0$ such that for all $n$, $0\leq r(z_n)\leq C$. Without loss of generality we may therefore assume that there exists $r\in [0,C]$ such that $r(z_n)\xrightarrow[n\to \infty]{}r.$ But then $\alpha^f(z_n)=\tau_f(z_n,z_n+r(z_n)e_1)\xrightarrow[n\to \infty]{}\tau_f(z,z+re_1)\leq \alpha^f(z)$ (the passage to the limit is possible thanks to the continuity of $(z,r)\mapsto \tau(z,z+re_1)$). This yields a contradiction since we would get $\alpha^f(z)+\varepsilon_0\leq \alpha^f(z).$
\end{proof}

In the following we introduce variants of the field $\alpha^f$ that are good local approximations of the field $\alpha$ but that are less correlated. There are two natural ways to define a $R$-truncature of the field $\alpha^f$. The first one consists in replacing the field $f$ by its truncated version $f_R$ in definition \eqref{eq:alpha_intro}. The other possibility, which is very natural given the nature of the model, consists in taking the supremum over $r\in [0,R]$ in definition \eqref{eq:alpha_intro}. We observe that individually each of these methods should give a good local approximation of the field $\alpha^f$. However, none of them is $R$-decorrelated. In fact, to obtain a good approximation of the field $\alpha^f$ which is $R$-decorrelated we combine the two methods. More formally, given a field $g : \R^2 \to \R$ and $R>1$ we define the field $(\alpha_R^g(z))_{z\in \R^2}$ by
\begin{equation}
    \label{eq:def_alpha_R_g}
    \forall z\in \R^2,\ \alpha_R^g(z):= \sup_{r\in [0,R]} \tau_g(z,z+re_1),
\end{equation}
where we recall that $\tau$ is defined by \eqref{eq:def_tau}. Now, given a field $f=q\ast W$, we will be interested in the following approximations of the field $\alpha^f$: $\alpha_R^f$, $\alpha^{f_R}$, $\alpha_R^{f_R}$ but also $\alpha_{R_1}^{f_{R_2}}.$

\begin{lemma}
\label{lemma:trunc_1}
Let $f=q\ast W$ where $q$ satisfies Assumption \ref{a:q} for some $\beta>1$. Let $b:=\frac{2\beta-2}{3}$. There exist constants $C,c>0$ such that for any $R_0\in [1,\infty]$ and for any $R\geq 1$, we have
\begin{equation}
    \label{eq:lemma_trunc_2}
    \forall \varepsilon\in ]0,1],\ \Proba{\exists z\in [0,1]^2,\ |\alpha_{R_0}^f(z)-\alpha_{R_0}^{f_R}(z)|\geq \varepsilon}\leq CR_0e^{-c\varepsilon^2R^{b}}.
\end{equation}
\end{lemma}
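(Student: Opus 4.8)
The plan is to bound the difference $|\alpha_{R_0}^f(z) - \alpha_{R_0}^{f_R}(z)|$ uniformly over $z \in [0,1]^2$ by comparing the two suprema, which are both taken over the \emph{same} compact interval $r \in [0, R_0]$ (or $[0,\infty)$ when $R_0 = \infty$, a case I will treat separately at the end). The elementary inequality $|\sup_r A(r) - \sup_r B(r)| \leq \sup_r |A(r) - B(r)|$ reduces the problem to controlling $\sup_{r \in [0,R_0]} |\tau_f(z, z+re_1) - \tau_{f_R}(z, z+re_1)|$. By the definition \eqref{eq:def_tau}, for $r > 0$ this equals $\frac{1}{r}\big| (f - f_R)(z+re_1) - (f - f_R)(z) \big|$ and for $r = 0$ it equals $|\scal{\nabla(f - f_R)(z)}{e_1}|$. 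Writing $h := f - f_R$, I will split the range of $r$ at a threshold, say $r = 1$: for $r \in [0,1]$ I use the mean value theorem to bound $\frac{1}{r}|h(z+re_1) - h(z)| \leq \sup_{[z, z+e_1]} \norm{\nabla h}{}$, while for $r \in [1, R_0]$ I bound it by $2 \sup_{z + [0,R_0]e_1} |h|$. Thus the event in \eqref{eq:lemma_trunc_2} is contained in the union of $\{\sup_{[0,2]\times[0,1]} \norm{\nabla h}{} \geq \varepsilon/2\}$ and $\{\sup_{[0,R_0+1]\times[0,1]} |h| \geq \varepsilon/2\}$.

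Next I would bound each of these two probabilities. For the gradient term, Proposition \ref{prop:severo} directly gives $\Proba{\sup_{[0,1]^2} \norm{\nabla f - \nabla f_R}{} > \varepsilon/2} \leq Ce^{-c\varepsilon^2 R^{2\beta-2}}$; covering $[0,2]\times[0,1]$ by finitely many unit squares and using stationarity absorbs the enlarged domain into the constant $C$, and note $2\beta - 2 \geq b$ so this term is harmless. For the sup term, I cover $[0, R_0+1] \times [0,1]$ by at most $\lceil R_0 \rceil + 1 \leq 2R_0$ unit squares (using $R_0 \geq 1$); applying Proposition \ref{prop:severo} and a union bound gives $\Proba{\sup_{[0,R_0+1]\times[0,1]} |h| \geq \varepsilon/2} \leq 2R_0 \cdot C e^{-c\varepsilon^2 R^{2\beta-2}}$. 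This already yields the exponent $2\beta - 2$, which is stronger than $b = \frac{2\beta-2}{3}$, so something in my splitting must actually be suboptimal relative to the stated $b$ — which suggests the intended proof does \emph{not} simply cover the whole strip of length $R_0$ but instead exploits the $1/r$ decay more carefully, trading off a scale $r \sim R^{?}$ against the growth of $|h|$ at distance $r$, where $f - f_R$ is no longer uniformly bounded but grows like $\sqrt{r}$ (cf. the first item of Lemma \ref{lemma:technical}). The cube-root exponent $b = (2\beta-2)/3$ is the tell-tale sign of such an optimization.

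So the refined plan is: fix an auxiliary scale $\rho = \rho(R,\varepsilon)$ to be chosen. For $r \in [0, \rho]$, bound $\frac{1}{r}|h(z+re_1)-h(z)|$ via the sup of $|h|$ over a strip of length $\rho$, costing a union bound over $\sim \rho$ squares, each contributing $e^{-c\varepsilon^2 R^{2\beta-2}}$. For $r \in [\rho, R_0]$, use $|\tau_f(z,z+re_1)| \leq \frac{1}{r}(|f(z+re_1)| + |f(z)|) \leq \frac{C\sqrt{r}}{r} \lesssim \rho^{-1/2}$ and likewise for $f_R$ (here I need a Borell–TIS tail for $\sup$ over strips of growing length; $\sup_{[0,r]\times[0,1]}|f|$ has Gaussian tails with variance proxy bounded independently of $r$ after the $\sqrt{r}$ rescaling, giving $\Proba{\sup_{[0,r]\times[0,1]}|f| > t\sqrt{r}} \leq Ce^{-ct^2}$, summable in $r$). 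Then the contribution of large $r$ to the difference is $\lesssim \rho^{-1/2}$, which is $\leq \varepsilon$ provided $\rho \gtrsim \varepsilon^{-2}$. Balancing the two error bounds — roughly $\rho \cdot e^{-c\varepsilon^2 R^{2\beta-2}}$ against the requirement $\rho \sim \varepsilon^{-2}$, and folding in a factor to handle that the first-range bound needs $\varepsilon^2 R^{2\beta-2} \gtrsim \log \rho$ — leads to an effective exponent of the form $\varepsilon^2 R^{(2\beta-2)/3}$ after optimizing $\rho$ as a suitable power of $R$. The prefactor $R_0$ in \eqref{eq:lemma_trunc_2} comes from the union bound over the $\lesssim R_0$ unit squares needed when $\rho$ or $R_0$ is large. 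The main obstacle is precisely this balancing of scales: getting the constants in the Borell–TIS tails for $\sup$ over long strips to be uniform in the strip length (which Proposition \ref{prop:BORELL_TIS2} and stationarity provide), and then choosing $\rho$ as a power of $R$ so that the exponent degrades by exactly a factor of $3$. The case $R_0 = \infty$ follows by monotone convergence / by noting $\alpha^f = \alpha_{R_0}^f$ for $R_0$ larger than the (a.s. finite) maximizer $r(z)$, uniformly on $[0,1]^2$ by item 5 of Lemma \ref{lemma:technical}, together with the finite-$R_0$ estimate and a limiting argument.
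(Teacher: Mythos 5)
Your first (``naive'') argument is in fact correct and proves a \emph{strictly stronger} bound than the lemma claims, with exponent $2\beta-2$ in place of $b=(2\beta-2)/3$. The combination of the elementary inequality $|\sup_r A(r)-\sup_r B(r)|\le\sup_r|A(r)-B(r)|$ over the common range $[0,R_0]$, the mean value theorem for $r\in[0,1]$, and the trivial bound $\frac{1}{r}|h(z+re_1)-h(z)|\le|h(z+re_1)|+|h(z)|$ for $r\ge 1$ (with $h:=f-f_R$) does reduce the event to the union of $\{\sup_{[0,2]\times[0,1]}\norm{\nabla(f-f_R)}{}\ge\varepsilon\}$ and $\{\sup_{[0,R_0+1]\times[0,1]}|f-f_R|\ge\varepsilon/2\}$. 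Proposition \ref{prop:severo}, stationarity, and a union bound over $O(R_0)$ unit squares then give $CR_0 e^{-c\varepsilon^2 R^{2\beta-2}}$, which implies the stated bound since $2\beta-2\ge b$ and $R\ge 1$; the case $R_0=\infty$ is vacuous since the right-hand side is then infinite. You should have trusted this and stopped.

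Your second-guessing is unwarranted: the cube-root exponent in the statement is an artifact of the paper's own method, not a signal that you need a scale balancing. The paper does not use your sup-inequality. It compares the actual maximizers $r_0^f(z)$ and $r_0^{f_R}(z)$ of the two truncated fields, controls $\alpha_{R_0}^{f_R}(z)-\alpha_{R_0}^f(z)$ by $2\norm{f-f_R}{}/r_0^{f_R}(z)$ when $r_0^{f_R}(z)$ is not small, and switches to a second-order Taylor expansion (bringing in $\norm{\nabla^2 f_R}{}$) when $r_0^{f_R}(z)$ is small; gluing the two regimes at the threshold $r_0^{f_R}(z)\sim\norm{f-f_R}{}^{1/2}$ yields a pointwise bound of the form $\norm{f-f_R}{}^{1/2}\bigl(1+\norm{\nabla^2 f}{}+\norm{\nabla^2 f_R}{}\bigr)+\norm{\nabla f-\nabla f_R}{}$, and it is exactly the square-root power together with the multiplication by an extra Gaussian factor that degrades $2\beta-2$ to $(2\beta-2)/3$ when the resulting tail events are optimized. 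Your MVT step kills the $1/r$ singularity at the outset, so the threshold, the second derivative, and the fractional power are all unnecessary. Your ``refined plan'' (trading a scale $\rho$ against the $\sqrt{r}$ growth of $f$) is neither the paper's mechanism nor needed; it would also require upgrading the almost-sure $\sqrt{r}$ bound of Lemma \ref{lemma:technical}, which comes with a random constant, to a quantitative tail estimate --- but this is moot, since your first argument already closes the proof and does so with a better exponent.
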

\begin{proof}
    In this proof, we denote by $Q_0$ the square $Q_0:=[0,1]^2$. More generally, given some parameter $r>0$ we denote by $Q_r$ the rectangle $Q_r := [0,r+1]\times [0,1]$. When $g : \R^2 \to \R$ is a function, we denote by $\norm{g}{Q_r} := \sup_{x\in Q_r}|g(x)|$. Let $R\geq 1$ and $R_0\in [1,\infty].$
    Let $z\in Q_0$. Denote by $r_0^f(z)$ (resp. $r_0^{f_R}(z)$) the smallest point in $[0,R_0]$ such that $\alpha^{f}_{R_0}(z)=\tau_{f}(z,z+r_0^f(z)e_1)$ (resp. $\alpha^{f_R}_{R_0}(z) =\tau_{f_R}(z,z+r_0^{f_R}(z)(e_1))$). These are well defined due to Lemma \ref{lemma:technical}.
    By the definition of $\tau$ (see \eqref{eq:def_tau}) and using the triangular inequality we obtain the following upper bound:
    \begin{align}
        \label{eq:first_method}
         \alpha^{f_R}_{R_0}(z)& =\tau_{f_{R}}(z,z+r_0^{f_R}(z)e_1)\nonumber\\
         &\leq \tau_f(z,z+r_0^{f_R}(z)e_1)+\frac{2\norm{f-f_{R}}{Q_{R_0}}}{r_0^{f_R}(z)}\nonumber\\
         &\leq \alpha^f_{R_0}(z)+\frac{2\norm{f-f_{R}}{Q_{R_0}}}{r_0^{f_R}(z)},
    \end{align}
    However, this estimate is only useful when $r_0^{f_R}(z)$ is not too small (in comparison to $\norm{f-f_R}{Q_{R_0}}$). Instead, we use another estimate which is more relevant for the regime of small $r_0^{f_R}(z)$. Assume that $r_0^{f_R}(z)\in [0,1]$, then
    \begin{align}
            \label{eq:second_method}
            \alpha_{R_0}^{f_R}(z)&=\tau_{f_{R}}(z,z+r(z)e_1) \nonumber\\
            &\leq \scal{\nabla f_{R}(z)}{e_1}+r(z)\norm{\nabla^2 f_R}{Q_1}  \nonumber\\
            &\leq \scal{\nabla f(z)}{e_1}+r(z)\norm{\nabla^2 f_R}{Q_1}+\norm{\nabla f-\nabla f_{R}}{Q_0} \nonumber\\
            & \leq \alpha^f_{R_0}(z)+r(z)\norm{\nabla^2 f_R}{Q_1}+\norm{\nabla f -\nabla f_{R}}{Q_0}.
    \end{align}
    By Proposition \ref{prop:severo} and by union bound
    \begin{equation}
        \Proba{\norm{f-f_R}{Q_{R_0}}\leq 1}\geq 1-CR_0e^{-cR^{2\beta-2}}.
    \end{equation}
    On the event, $\left\{\norm{f-f_R}{Q_{R_0}}\leq 1\right\}$ we have two possibilities. Either $r^{f_R}(z)\geq \norm{f-f_R}{Q_{R_0}}^{1/2}$ in which case (using \eqref{eq:first_method}) we have
    \begin{equation}
        \label{eq:result_first_method}
        \alpha_{R_0}^{f_R}(z)-\alpha_{R_0}^f \leq 2\norm{f-f_R}{Q_{R_0}}^{1/2}.
    \end{equation}
    Otherwise, $r^{f_R}(z)\leq \norm{f-f_R}{Q_{R_0}}^{1/2}\leq 1$, in which case (using \eqref{eq:second_method}) we have
    \begin{equation}
        \label{eq:result_second_method}
        \alpha_{R_0}^{f_R}(z)-\alpha_{R_0}^f \leq \norm{f-f_R}{Q_{R_0}}^{1/2}\norm{\nabla^2 f_R}{Q_1}+\norm{\nabla f-\nabla f_R}{Q_0}.
    \end{equation}
    Combining \eqref{eq:result_first_method} and \eqref{eq:result_second_method} we obtain
    
    \begin{equation}
        \label{eq:result_combine}
        \alpha_{R_0}^{f_R}(z)-\alpha_{R_0}^f(z) \leq \norm{f-f_R}{Q_{R_0}}^{1/2}(2+\norm{\nabla^2 f_R}{Q_1})+\norm{\nabla f-\nabla f_R}{Q_0}.
    \end{equation}
    
    We obtain a similar lower bound by swapping $\alpha_{R_0}^f$ and $\alpha_{R_0}^{f_R}$. The only difference being that $\norm{\nabla^2 f}{Q_1}$ appears instead of $\norm{\nabla^2 f_R}{Q_1}$. Ultimately since, the upper bound in \eqref{eq:result_combine} is independent from $z\in Q_0$. We conclude that on the event $\left\{\norm{f-f_R}{Q_{R_0}}\leq 1\right\}$ we have
    \begin{equation}
        \label{eq:result_combine_2}
        \norm{\alpha_{R_0}^{f_R}-\alpha_{R_0}^{f}}{Q_0} \leq \norm{f-f_R}{Q_{R_0}}^{1/2}(2+\norm{\nabla^2 f}{Q_1}+\norm{\nabla^2 f_R}{Q_1})+\norm{\nabla f-\nabla f_R}{Q_0}.
    \end{equation}
    Recall that $b=\frac{2\beta-2}{3}$. Let $\varepsilon\in ]0,1]$ We first conclude the proof in the case  $\varepsilon^2R^b>1.$ Indeed, in this case, if $\norm{\alpha_{R_0}^f-\alpha_{R_0}^{f_R}}{Q_0}$ is greater than $\varepsilon\in ]0,1[$ we are in at least one of the following cases
    \begin{enumerate}
        \item $\norm{f-f_R}{Q_{R_0}}>\frac{1}{16}\varepsilon^{2/3}R^{-b}$.
        \item or $\norm{\nabla f-\nabla f_R}{Q_0}>\varepsilon/2$,
        \item or $\norm{\nabla^2 f}{Q_1}>\varepsilon^{2/3}R^{b/2}-1$,
        \item or $\norm{\nabla^2 f_R}{Q_1}>\varepsilon^{2/3}R^{b/2}-1$.
    \end{enumerate}
    We observe in particular that $\left\{\norm{f-f_R}{Q_{R_0}}> 1\right\}$ implies that we are in the first case since $\varepsilon\leq 1$ and $R^{b}\geq 1.$ We also observe that $\varepsilon^{2/3}R^{b/2} = (\varepsilon^2R^{b})^{1/3}R^{b/6}\geq 1$.
    In order to control the probability of the first and second cases we may use Proposition \ref{prop:severo}, in order to control the probability of the third and fourth item, we may use Proposition \ref{prop:BORELL_TIS2}.
    Altogether there exist positive constants $C,c$ that depends only on $q$ such that,
    \begin{align}
        &\Proba{\norm{\alpha_{R_0}^f-\alpha_{R_0}^{f_R}}{Q_0}>\varepsilon}\\
        \leq \quad & CR_0e^{-cR^{2\beta-2}\varepsilon^{4/3}R^{-2b}}+Ce^{-cR^{2\beta-2}\varepsilon^2}+Ce^{-cR^{b}\varepsilon^{4/3}} \nonumber\\
        \leq \quad & CR_0e^{-c\varepsilon^2R^{b}}
    \end{align}
    This concludes the proof in the case $\varepsilon^2R^b\geq 1.$ Now if $0< \varepsilon^2R^b\leq 1$ then $CR_0e^{-c\varepsilon^2R^b}\geq CR_0e^{-c}\geq Ce^{-c}$. It is therefore enough to replace $C$ by $\max(C,e^c)$ to get the conclusion.
\end{proof}

\begin{lemma}
\label{lemma:trunc_2}
Let $f=q\ast W$ where $q$ satisfies Assumption \ref{a:q} for some $\beta>1$.
    There exist constants $C,c>0$ such that for any $R_0\in [1,\infty]$ and $R\geq 1$ the following holds,
    \begin{equation}
        \label{eq:lemma_tronc_2}
        \forall \varepsilon\in ]0,1],\ \Proba{\exists z\in [0,1]^2,\ |\alpha^{f_{R_0}}(z)-\alpha^{f_{R_0}}_{R}(z)|\geq \varepsilon}\leq Ce^{-c\varepsilon^2R^2}
    \end{equation}
\end{lemma}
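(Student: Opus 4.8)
The plan is to reduce the comparison to the event that the optimal slope-ray of $f_{R_0}$ issued from a point of $[0,1]^2$ extends beyond distance $R$, and then to show that this forces an atypically large value of $f_{R_0}$ far from $[0,1]^2$, which the Borell--TIS inequality controls.

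First, since $[0,R]\subset[0,\infty[$ one has $|\alpha^{f_{R_0}}(z)-\alpha^{f_{R_0}}_R(z)|=\alpha^{f_{R_0}}(z)-\alpha^{f_{R_0}}_R(z)\ge 0$. Apply Lemma~\ref{lemma:technical} to the field $f_{R_0}$: for $R_0<\infty$ the kernel $q\chi_{R_0}$ is $\mathcal C^4$, symmetric and compactly supported, hence satisfies Assumption~\ref{a:q}, and all the constants in that lemma may be taken uniform in $R_0$ since $\|q\chi_{R_0}\|_{L^2}\le\|q\|_{L^2}$ and likewise for derivatives; for $R_0=\infty$ one has $f_\infty=f$. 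Thus for every $z$ there is a smallest maximiser $r(z)\in[0,\infty[$ of $r\mapsto\tau_{f_{R_0}}(z,z+re_1)$, with $\alpha^{f_{R_0}}(z)=\tau_{f_{R_0}}(z,z+r(z)e_1)$. By continuity and compactness on $[0,R]$, the difference is strictly positive precisely when $r(z)>R$, in which case $\alpha^{f_{R_0}}_R(z)\ge\tau_{f_{R_0}}(z,z+Re_1)$. Hence the event to be bounded is contained in $\{\exists z\in[0,1]^2:\ r(z)>R,\ \alpha^{f_{R_0}}(z)\ge\tau_{f_{R_0}}(z,z+Re_1)+\varepsilon\}$.

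One may assume $\varepsilon^2R^2$ exceeds a fixed absolute constant, the bound being otherwise trivial after enlarging $C$; in this regime $R$ is large. Work on the event $\mathcal G:=\{\sup_{[0,1]^2\cup([R,R+1]\times[0,1])}|f_{R_0}|\le\varepsilon R/8\}$, which by Proposition~\ref{prop:BORELL_TIS2}, stationarity and a union bound (all uniform in $R_0$) satisfies $\Proba{\mathcal G^c}\le Ce^{-c\varepsilon^2R^2}$. If $\tau_{f_{R_0}}(z,z+Re_1)\le-\varepsilon/2$ then $f_{R_0}(z+Re_1)\le f_{R_0}(z)-\varepsilon R/2\le-3\varepsilon R/8$ on $\mathcal G$, which is impossible; so on $\mathcal G$ we must have $\alpha^{f_{R_0}}(z)>\varepsilon/2$, and writing $w:=z+r(z)e_1$ (so $w\in[R,\infty[\times[0,1]$ and $r(z)\ge w_1-1$),
\[
f_{R_0}(w)=f_{R_0}(z)+r(z)\,\alpha^{f_{R_0}}(z)\ \ge\ -\frac{\varepsilon R}{8}+\frac{\varepsilon}{2}(w_1-1).
\]
Thus on $\mathcal G$ the bad event is contained in $\{\exists w\in[R,\infty[\times[0,1]:\ f_{R_0}(w)\ge\tfrac{\varepsilon}{2}w_1-\tfrac{\varepsilon R}{8}-\tfrac{\varepsilon}{2}\}$. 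I would estimate this by splitting $[R,\infty[\times[0,1]$ into the unit rectangles $T_k=[k,k+1]\times[0,1]$, $k\ge\lfloor R\rfloor$: on $T_k$ the threshold is at least $\tfrac{\varepsilon}{2}k-\tfrac{\varepsilon R}{8}-\tfrac{\varepsilon}{2}\ge\tfrac{\varepsilon k}{4}$ for $k\ge\lfloor R\rfloor$ (since $R$ is large), so Borell--TIS (Proposition~\ref{prop:BORELL_TIS2} plus stationarity, uniform in $R_0$) gives $\Proba{\sup_{T_k}f_{R_0}\ge\tfrac{\varepsilon k}{4}}\le Ce^{-c\varepsilon^2k^2}$ once $\varepsilon k$ is bounded below, which holds for $k\ge R$ here; summing $\sum_{k\ge\lfloor R\rfloor}Ce^{-c\varepsilon^2k^2}$ (using $\varepsilon^2k^2\ge\varepsilon^2R^2+\varepsilon^2(k-R)^2$ for $k\ge R$) and absorbing the leftover prefactor into a slightly smaller constant in the exponent yields $Ce^{-c'\varepsilon^2R^2}$. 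Combined with the bound on $\Proba{\mathcal G^c}$, this gives the lemma.

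The main obstacle is this last step: in contrast with the truncation of the field treated in Lemma~\ref{lemma:trunc_1}, the optimal ray can here be arbitrarily long, so one union-bounds over infinitely many far rectangles whose thresholds grow like $\varepsilon k$ rather than remaining of size $\varepsilon R$; one has to check that the series is summable and that the residual prefactor does not degrade the exponent below order $\varepsilon^2R^2$ in the relevant regime. An equivalent and perhaps more robust way to run this step is to feed in the growth estimate of Lemma~\ref{lemma:technical}(1) for $f_{R_0}$, with $R_0$-uniform Gaussian tails for its random constant: on the bad event, $\alpha^{f_{R_0}}(z)>\varepsilon/2$ together with $r(z)>R$ bounds that constant from below by a multiple of $\varepsilon\sqrt{R}$ and localises $w$ to within distance $O(\varepsilon^{-2})$ of the unit square, while a separate short argument — using that a long optimal ray forces $f_{R_0}$ to stay below a nearly flat line over $[0,R]$, an event of probability $\le Ce^{-cR}$ — covers the complementary range of very small $\varepsilon$.
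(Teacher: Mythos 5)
Your proof is correct and follows essentially the same strategy as the paper's: bound the probability that the optimal ray escapes past distance $R$ by combining Borell--TIS control on $f_{R_0}$ near the unit square (and near $[R,R+1]\times[0,1]$) with a union bound over far unit rectangles $T_k$, $k\ge\lfloor R\rfloor$, where the threshold grows linearly in $k$. The organization differs slightly in a way that is arguably cleaner: you exploit that $\alpha^{f_{R_0}}-\alpha^{f_{R_0}}_R\ge 0$, introduce the optimal ray-length $r(z)$ for $f_{R_0}$, and package the near control into a single good event $\mathcal G$ bounding $|f_{R_0}|$ on $[0,1]^2\cup([R,R+1]\times[0,1])$; the paper instead splits into two symmetric events (the sup over $r\ge R$ is large, or the sup over $r\le R$ is very negative), each of which is then reduced to near/far Borell--TIS bounds. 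In particular your use of $\tau_{f_{R_0}}(z,z+Re_1)$ as a single-point lower bound on $\alpha^{f_{R_0}}_R(z)$ replaces the paper's event $\mathcal B$ (which requires $f_{R_0}$ to be small on all of $[R/2,R]\times[0,1]$), which is a modest streamlining. One small caveat both arguments share, which you correctly flag yourself: the tail series $\sum_{k\ge\lfloor R\rfloor}e^{-c\varepsilon^2k^2}$ carries a prefactor of order $1/(R\varepsilon^2)$, and a sentence is needed to dispatch the regime where $R\varepsilon^2$ is small; the paper is equally terse on this point, so I would not count it as a gap relative to the paper's own standard of rigor.
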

\begin{proof}
    For convenience, we write $g=f_{R_0}$.
    On the event that there exists $z\in [0,1]^2$ such that $|\alpha^g(z)-\alpha^g_{R}(z)|\geq \varepsilon$ then it means that for this $z$ we have
    $$\sup_{r\geq R}\tau_g(z,z+re_1)\geq \sup_{0\leq r\leq R}\tau_g(z,z+re_1)+\varepsilon.$$
    This implies that we are in at least one of the following two cases
    \begin{enumerate}
        \item Either there exists $z\in [0,1]^2$ such that, $\sup_{r\geq R}\tau_g(z,z+re_1)\geq \varepsilon/2$.
        \item or there exists $z\in [0,1]^2$ such that $\sup_{0\leq r\leq R}\tau_g(z,z+re_1)\leq -\varepsilon/2$.
    \end{enumerate}
    We denote by $\mathcal{A}$ the event,
    $$\mathcal{A}:= \left\{\exists z\in [0,1]^2,\ \sup_{r\geq R}\tau_g(z,z+re_1)\geq \varepsilon/2\right\}.$$
    On the event $\mathcal{A}$, there exists $z\in [0,1]^2$ and $r\geq R$ such that $g(z+re_1)\geq g(z)+r\varepsilon/2$ we thus split the event $\mathcal{A}$ into two subcases:
    \begin{itemize}
        \item Either $\inf_{z\in [0,1]^2}g(z)\leq -\frac{1}{4}R\varepsilon$, this has probability of order $e^{-cR^2\varepsilon^2}$ where $c,C>0$ can be chosen independently from $R_0$ by Proposition \ref{prop:BORELL_TIS2},
        \item or there exists $z\in [0,1]^2$ and $r\geq R$ such that
        $$g(z+re_1)\geq \frac{1}{2}r\varepsilon-\frac{1}{4}R \varepsilon \geq \frac{1}{4}r\varepsilon.$$
    \end{itemize}
    However, this last subcase implies that there must exist some $k\geq \lfloor R\rfloor$ such that the following event occurs.
    $$\mathcal{A}_k := \left\{\sup_{z'\in [k,k+1]\times[0,1]}g(z')\geq k\varepsilon/2\right\}.$$ By stationarity and Proposition \ref{prop:BORELL_TIS2} we see that there are positive constants $C,c$ (independent from $R_0$) such that
    \begin{equation}
        \Proba{\mathcal{A}_k}\leq Ce^{-c\varepsilon^2k^2}
    \end{equation}
    hence by an union bound we find that,
    \begin{equation}
        \label{eq:event_A}
        \Proba{\mathcal{A}}\leq Ce^{-cR^2\varepsilon^2},
    \end{equation}
    where again $C,c>0$ are positive constant that do not depend on $R_0.$
    
    We denote by $\mathcal{B}$ the event,
    $$\mathcal{B}:= \left\{\exists z\in [0,1]^2,\ \sup_{0\leq r\leq R}\tau_g(z,z+re_1)\leq -\frac{\varepsilon}{2}\right\}.$$
    On the event $\mathcal{B}$, there exists $z\in [0,1]^2$ such that for all $0\leq r\leq R$ then $g(z+re_1)\leq g(z)-r\varepsilon/2$
    We now split the event $\mathcal{B}$ into two subcases:
    \begin{itemize}
        \item either we have $\sup_{z\in [0,1]^2}g(z)\geq \frac{1}{8}R\varepsilon$, this has probability at most $Ce^{-cR^2\varepsilon^2}$ where $C,c>0$ do not depend on $R_0$ by Proposition \ref{prop:BORELL_TIS2},
        \item or there exists $z\in [0,1]$ such that for all $R/2\leq r\leq R$, $g(z+re_1)\leq -R\varepsilon/8$. This implies $$\sup_{[R-1,R]\times [0,1]}|g|\geq \varepsilon R/8$$
        which has probability at most $Ce^{-cR^2\varepsilon^2}$ with $C,c>0$ independent from $R_0$ by stationarity and Proposition \ref{prop:BORELL_TIS2}.
    \end{itemize}
    This proves
    \begin{equation}
        \label{eq:event_B}
        \Proba{\mathcal{B}}\leq Ce^{-cR^2\varepsilon^2},
    \end{equation}
    where $C,c>0$ are constants independent from $R_0.$
    Together with \eqref{eq:event_A}, \eqref{eq:event_B} concludes the proof.
\end{proof}
\begin{corollary}
\label{cor:trunc_3}
Let $f=q\ast W$ where $q$ satisfies Assumption \ref{a:q} for some $\beta>1$. Let $b_0:=\min\left(\frac{2\beta-2}{3},2\right)$. For any $b\in ]0,b_0[$ there exist constants $c,C>0$ such that for all $R\geq 1,$
\begin{equation}
    \label{eq:trunc_3}
    \forall \varepsilon\in ]0,1],\ \Proba{\exists z \in [0,1]^2,\ |\alpha^f(z)-\alpha^{f_R}_R(z)|>\varepsilon}\leq Ce^{-cR^b\varepsilon^2}.
\end{equation}
In particular, with the terminology of Definition \ref{def:approximations}, the sequence $(\alpha^{f_{R/4}}_{R/4})_{R\geq 1}$ is a $(2,b)$-sequence of finite range approximations of $\alpha^f$.
\end{corollary}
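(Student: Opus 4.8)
The statement to prove is Corollary \ref{cor:trunc_3}, which asserts that $\alpha^f$ is well-approximated by $\alpha^{f_R}_R$ (and then, rescaling $R\to R/4$, that these give a genuine $(2,b)$-sequence of finite range approximations). The natural strategy is simply to decompose the difference $\alpha^f - \alpha^{f_R}_R$ through an intermediate quantity and apply the two preceding lemmas. The cleanest choice of intermediate is $\alpha^{f_R}_\infty = \alpha^{f_R}$: we write
\begin{equation}
    |\alpha^f(z) - \alpha^{f_R}_R(z)| \leq |\alpha^f(z) - \alpha^{f_R}(z)| + |\alpha^{f_R}(z) - \alpha^{f_R}_R(z)|.
\end{equation}
The first term is controlled by Lemma \ref{lemma:trunc_1} applied with $R_0 = \infty$: since $\alpha^f_\infty = \alpha^f$ and $\alpha^{f_R}_\infty = \alpha^{f_R}$, that lemma gives a bound $C\cdot\infty\cdot e^{-c\varepsilon^2 R^{(2\beta-2)/3}}$ — wait, the prefactor $R_0$ is infinite here, so I cannot apply Lemma \ref{lemma:trunc_1} literally with $R_0=\infty$. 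The fix is to take $R_0 = R$ instead in Lemma \ref{lemma:trunc_1}, bounding $|\alpha^f_R(z) - \alpha^{f_R}_R(z)|$, and then also invoke Lemma \ref{lemma:trunc_2} with $R_0 = \infty$ to compare $\alpha^f = \alpha^{f_\infty}$ with $\alpha^{f_\infty}_R = \alpha^f_R$. So the correct decomposition is
\begin{equation}
    |\alpha^f(z) - \alpha^{f_R}_R(z)| \leq |\alpha^f(z) - \alpha^f_R(z)| + |\alpha^f_R(z) - \alpha^{f_R}_R(z)|,
\end{equation}
where the first term is handled by Lemma \ref{lemma:trunc_2} with $R_0=\infty$ (giving $Ce^{-c\varepsilon^2 R^2/4}$ after splitting $\varepsilon$ into $\varepsilon/2+\varepsilon/2$) and the second by Lemma \ref{lemma:trunc_1} with $R_0 = R$ (giving $CR\, e^{-c\varepsilon^2 R^{(2\beta-2)/3}}$).

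**Combining the estimates.** With the union bound over the event $\{\exists z : |\alpha^f(z)-\alpha^f_R(z)| \geq \varepsilon/2\}$ and the event $\{\exists z : |\alpha^f_R(z)-\alpha^{f_R}_R(z)| \geq \varepsilon/2\}$, one gets
\begin{equation}
    \Proba{\exists z\in[0,1]^2 : |\alpha^f(z) - \alpha^{f_R}_R(z)| > \varepsilon} \leq Ce^{-c\varepsilon^2 R^2} + CR\, e^{-c\varepsilon^2 R^{(2\beta-2)/3}}.
\end{equation}
Now for any fixed $b < b_0 = \min\big(\tfrac{2\beta-2}{3}, 2\big)$, both exponents $2$ and $\tfrac{2\beta-2}{3}$ strictly exceed $b$, so $R^2 \geq R^b$ and $R^{(2\beta-2)/3}\geq R^b$ for $R\geq 1$; moreover the polynomial prefactor $R$ is absorbed: since $\varepsilon^2 R^{(2\beta-2)/3} - \varepsilon^2 R^b$ grows like $R^b(R^{(2\beta-2)/3-b}-1)$ — actually more carefully, one needs $R e^{-c\varepsilon^2 R^{(2\beta-2)/3}} \leq C' e^{-c'\varepsilon^2 R^b}$, which holds because for $\varepsilon \leq 1$ and $R$ large the extra factor $R$ is dominated by $e^{c''\varepsilon^2 R^b}$ only if $\varepsilon^2 R^b$ is large; in the complementary regime $\varepsilon^2 R^b \leq$ const one simply enlarges $C$ as in the endgame of Lemma \ref{lemma:trunc_1}. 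Splitting into the two cases $\varepsilon^2 R^b \gtrsim \log R$ versus not, exactly as done in that lemma's proof, yields the claimed bound $Ce^{-cR^b\varepsilon^2}$ with new constants.

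**The rescaling to a finite range sequence.** For the final sentence: the field $\alpha^{f_{R/4}}_{R/4}$ has finite range of dependence $R$. Indeed $f_{R/4} = (q\chi_{R/4})\ast W$ is $(R/4)$-correlated because $q\chi_{R/4}$ is supported in a ball of radius $R/8 < R/4$; and $\alpha^{f_{R/4}}_{R/4}(z)$ depends only on the values of $f_{R/4}$ on $z + [0, R/4]\times\{0\}$, which for $z \in D$ lies within the $R/4$-neighborhood of $D$; so if $d(D_1,D_2)\geq R$ then the relevant regions for $D_1$ and $D_2$ are at distance at least $R - R/4 - R/4 = R/2 > R/4$, giving independence. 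Combined with the tail bound from \eqref{eq:trunc_3} applied with $R/4$ in place of $R$ (which costs only a constant factor in $c$), this verifies both bullet points of Definition \ref{def:approximations} with parameters $a=2$ and exponent $b$.

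**Main obstacle.** There is no serious obstacle; the proof is a bookkeeping exercise. The only points requiring a little care are (i) choosing the intermediate truncation correctly so that each of Lemma \ref{lemma:trunc_1} and Lemma \ref{lemma:trunc_2} is applied with a \emph{finite} value of the prefactor parameter where needed — in particular one must not try to use Lemma \ref{lemma:trunc_1} with $R_0 = \infty$ — and (ii) absorbing the polynomial prefactors and reconciling the two different exponents $2$ and $\tfrac{2\beta-2}{3}$ into a single exponent $b < b_0$, which is done by the same two-regime trick ($\varepsilon^2 R^b$ large vs. bounded) already used at the end of the proof of Lemma \ref{lemma:trunc_1}.
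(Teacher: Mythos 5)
Your proof is correct and follows the same route as the paper: decomposing $|\alpha^f-\alpha^{f_R}_R|$ through the intermediate $\alpha^f_R$, applying Lemma \ref{lemma:trunc_2} with $R_0=\infty$ to the first term and Lemma \ref{lemma:trunc_1} with $R_0=R$ to the second, then absorbing the polynomial prefactor and reconciling the two exponents into any $b<b_0$. Your verification of the $R$-decorrelation of $\alpha^{f_{R/4}}_{R/4}$ (via the support of $q\chi_{R/4}$ and the $[0,R/4]\times\{0\}$ lookahead) is somewhat more explicit than the paper's one-line assertion, but it is the same argument.
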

\begin{proof}
We have by triangular inequality and union bound,
\begin{align*}
    &\quad \Proba{\norm{\alpha^f-\alpha^{f_R}_R}{[0,1]^2}>\varepsilon} \\
    \leq &\quad \Proba{\norm{\alpha^f-\alpha^{f}_R}{[0,1]^2}>\varepsilon/2} + \Proba{\norm{\alpha^f_R-\alpha^{f_R}_R}{[0,1]^2}>\varepsilon/2} \\
    \leq &\quad Ce^{-cR^2\varepsilon^2}+CRe^{-cR^{\frac{2\beta-2}{3}}\varepsilon^2} \\
    \leq & \quad CRe^{-cR^{b_0}\varepsilon^2.}
\end{align*}
Since $b\in ]0,b_0[$ then by adjusting constants we get
\begin{equation}
    \Proba{\norm{\alpha^f-\alpha^{f_R}_R}{[0,1]^2}>\varepsilon} \leq Ce^{-cR^b\varepsilon^2}.
\end{equation}
Where from the second to third line we used Lemma \ref{lemma:trunc_1} and Lemma \ref{lemma:trunc_2}.
Now, if $D_1,D_2$ are two subsets of $\R^2$ such that $d(D_1,D_2)>R$ then the two collections $(\alpha^{f_{R/4}}_{R/4}(z))_{z\in D_1}$ and $(\alpha^{f_{R/4}}_{R/4}(z))_{z\in D_2}$ are independent, thus the two conditions in Definition \ref{def:approximations} are verified.
\end{proof}

\section{A renormalization argument}
\label{sec:renorm}
In this section we present a technical renormalization argument. This argument is technical and has already seen uses in various domains, see for instance \cite{Szn12}, \cite{DCGRS23}, \cite{DCRRV23}, \cite{ARS14} for an incomplete list of references. 
Let $d\geq 2$ be a dimension parameter. Although we will only use the case $d=2$, it is not much harder do present this argument for a general dimension.
Let $\sigma= (\sigma_n)_{n\in \N}$ and $\mu=(\mu_n)_{n\in \N}$ be two sequences of positive integers. Let $\lambda_0\in \mathbb{N}^*$. We define a sequence $(\lambda_n)_{n\in \N}$ by
\begin{equation}
    \forall n\in \N,\ \lambda_{n+1}= \lambda_n \mu_n.
\end{equation}
We denote by $\mathbb{L}_n$ the lattice
\begin{equation}
    \mathbb{L}_n := \lambda_n \Z^d.
\end{equation}
For $u\in \mathbb{L}_n$ we define $\Lambda_n(u)\subset \mathbb{L}_{n-1}$ by 
\begin{equation}
   \Lambda_n(u) = (u+[0,\lambda_n[^d)\cap \mathbb{L}_{n-1}.
\end{equation}
We observe that $\Lambda_n(u)$ contains exactly $\left(\frac{\lambda_n}{\lambda_{n-1}}\right)^d=\mu_{n-1}^d$ points.
 Let $(\Omega, \mathcal{F}, \mathbb{P})$ be a probability space. Suppose we have several collections of events of $\mathcal{F}$.
\begin{itemize}
    \item (\textit{Seed events}) A collection of events $\mathcal{A}^{(0)}= (\mathcal{A}^{(0)}_u)_{u\in \mathbb{L}_0}\in \mathcal{F}^{\mathbb{L}_0}.$ 
    \item (\textit{Auxiliary events}) For each $n\in \N^*$, a collection of events $\mathcal{B}^{(n)} := (\mathcal{B}^{(n)}_u)_{u\in \mathbb{L}_n}  \in \mathcal{F}^{\mathbb{L}_n}$.
\end{itemize}
A renormalization scheme (of dimension $d$) associated to $(\lambda_0, \mu,\sigma)$ is the data of a collection of seed events and collections of auxiliary events for each $n\in \N^*$. We write $\mathcal{R}=(\mathcal{A}^{(0)}, (\mathcal{B}^{(n)})_{n\in \N^*})$ such a renormalization scheme.

Given a renormalization scheme $\mathcal{R}=(\mathcal{A}^{(0)}, (\mathcal{B}^{(n)})_{n\in \N^*})$, we recursively define the events $\mathcal{A}_u^{(n)}$ for $n\in \N^*$ and $u\in \mathbb{L}_n$ as follows:
\begin{equation}
    \label{eq:def_renorm}
    \mathcal{A}^{(n)}_u := \mathcal{B}^{(n)}_u \cap \bigcap_{\overset{u_1,u_2 \in \Lambda_n(u)}{\norm{u_1-u_2}{\infty}\geq 5\sigma_{n-1}  \lambda_{n-1}}} \mathcal{A}^{(n-1)}_{u_1}\cup \mathcal{A}^{(n-1)}_{u_2},
\end{equation}
where the second intersection is taken over all points $u_1,u_2$ in $\Lambda_n(u)$ such that there are distant of at least $5\sigma_{n-1} \lambda_{n-1}$ for the sup norm.
To understand why this definition of renormalization argument is interesting we introduce the following definitions.
\begin{definition}
    Let $m\geq 0$ and $u\in \mathbb{L}_m$. There exists a unique sequence $(u_k)_{k\geq m}$ such that
    \begin{itemize}
        \item $u_m=u$.
        \item $\forall k\geq m,\ u_k \in \mathbb{L}_k.$
        \item $\forall k\geq m,\ u_k\in \Lambda_{k+1}(u_{k+1}).$
    \end{itemize}
    This sequence is called the \textit{renormalization sequence} associated to $u\in \mathbb{L}_m$
\end{definition}
\begin{definition}
    Let $\mathcal{R}=(\mathcal{A}^{0}, (\mathcal{B}^{(n)})_{n\in \N^*})$ a renormalization scheme associated to $(\lambda_0,\mu,\sigma)$. Let $n,m\in \N$ such that $n\geq m$ and $u\in \mathbb{L}_m$. Let $(u_k)_{k\geq m}$ the renormalization sequence associated to $u$. The point $u$ is said to be \textit{good up to scale $n$} if all events $\mathcal{A}^{(k)}_{u_k}$ occur for $m\leq k\leq n$.
    A point $u\in \mathbb{L}_0$ is said to be \textit{$n$-good} if it is good up to scale $n$.
\end{definition}
In particular a point $u\in \mathbb{L}_0$ being $n$-good implies that the seed event $\mathcal{A}^{(0)}_u$ occurs and that all auxiliary events $\mathcal{B}^{(k)}_{u_k}$ occur for $1\leq k\leq n$.
The main objective of the renormalization scheme is to find long paths of $n$-good points in $\mathbb{L}_0$ with high probability. However we need the renormalization scheme to satisfy some conditions for this to hold.
\begin{condition}
    \label{cond:c1}
    We say that $(\mu,\sigma)$ verifies Condition \ref{cond:c1} if
    \begin{equation}
            \forall n\in \N,\ \mu_n\geq 100 \sigma_n \text{ and }\sigma_n \geq 2.
    \end{equation}
\end{condition}
We argue that under Condition \ref{cond:c1}, then given a nearest-neighbor path in $\mathbb{L}_m$ of points good up to scale $n$ then one may extract a nearest-neighbor path in $\mathbb{L}_{m-1}$ of points good up to scale $n$. This process can be repeated to ultimately extract a nearest-neighbor path in $\mathbb{L}_0$ vertices that are $n$-good.
\begin{lemma}
\label{lemma:der_renorm_extract_path}
Let $\mathcal{R}=(\mathcal{A}^{0}, (\mathcal{B}^{(n)})_{n\in \N^*})$ be a renormalization scheme associated to $(\lambda_0,\mu,\sigma)$. Assume that Condition \ref{cond:c1} holds.
Let $n,m\in \N$ be two integers such that $n\geq m\geq 1$. Let $\gamma = (u_1,u_2,\dots,u_k)$ be a nearest-neighbor path of points in $\mathbb{L}_m$ (that is $\norm{u_i-u_{i+1}}{1}=\lambda_m$ for $1\leq i \leq k-1$). Assume that all the points $u_i$ are good up to scale $n$. Then there exists $\tilde{\gamma}=(v_1,v_2,\dots, v_l)$ a nearest-neighbor path of points in $\mathbb{L}_{m-1}$ such that
\begin{itemize}
    \item $v_1\in \Lambda_m(u_1).$
    \item $v_l\in \Lambda_m(u_k).$
    \item All the points $v_i$ (for $1\leq i \leq l$) are good up to scale $n$.
\end{itemize}
\end{lemma}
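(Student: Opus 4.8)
The plan is to prove the statement by induction on the number of scales $m$ we descend. It suffices to treat a single descent step: given a nearest-neighbor path $\gamma = (u_1,\dots,u_k)$ in $\mathbb{L}_m$ of points good up to scale $n$, produce a nearest-neighbor path in $\mathbb{L}_{m-1}$ of points good up to scale $n$ whose endpoints lie in $\Lambda_m(u_1)$ and $\Lambda_m(u_k)$ respectively; iterating this from scale $m$ down to scale $0$ gives the result. So from now on I fix $m \geq 1$ and work with a single step.

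The core combinatorial input is the structure of the defining relation \eqref{eq:def_renorm}: if $u \in \mathbb{L}_m$ is good up to scale $n$, then in particular $\mathcal{A}^{(m)}_u$ occurs, which means that for \emph{every} pair $u_1',u_2' \in \Lambda_m(u)$ with $\norm{u_1'-u_2'}{\infty} \geq 5\sigma_{m-1}\lambda_{m-1}$, at least one of $\mathcal{A}^{(m-1)}_{u_1'}$, $\mathcal{A}^{(m-1)}_{u_2'}$ occurs; and moreover each such $u_i'$ for which $\mathcal{A}^{(m-1)}_{u_i'}$ occurs is itself good up to scale $n$, because the renormalization sequence of $u_i'$ continues as the renormalization sequence of $u$ from scale $m$ on, and all of $\mathcal{A}^{(m)}_u, \mathcal{A}^{(m+1)}_{\cdot},\dots,\mathcal{A}^{(n)}_{\cdot}$ occur by hypothesis. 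Thus within each box $\Lambda_m(u_i)$ the set $G_i$ of points good up to scale $n$ is ``large'' in the sense that it is not avoidable by any pair of points that are $5\sigma_{m-1}\lambda_{m-1}$-separated. The elementary geometric consequence I would extract is: $G_i$ contains a point in each of the two ``halves'' of $\Lambda_m(u_i)$ in every coordinate direction, and more usefully, for any sub-box of $\Lambda_m(u_i)$ of side at least $5\sigma_{m-1}\lambda_{m-1}+1$, $G_i$ meets it; I would phrase this as: $G_i$ is a $5\sigma_{m-1}\lambda_{m-1}$-net in $\Lambda_m(u_i)$, or more precisely, cannot miss a pair of opposite faces separated by that distance.

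With this in hand the construction is the standard ``crossing of a tube'' argument. Two consecutive boxes $\Lambda_m(u_i)$ and $\Lambda_m(u_{i+1})$ share a face (since $\norm{u_i - u_{i+1}}{1} = \lambda_m$); I would choose in each $\Lambda_m(u_i)$ a point $v^{(i)} \in G_i$ lying in a central region of the box (the sub-box of side $\lambda_m - 10\sigma_{m-1}\lambda_{m-1}$ concentric with $\Lambda_m(u_i)$, which is nonempty because $\mu_{m-1} \geq 100\sigma_{m-1}$ by Condition \ref{cond:c1}, so $\lambda_m/\lambda_{m-1} = \mu_{m-1} \geq 100\sigma_{m-1} > 10\sigma_{m-1}$), and then, for each consecutive pair, I would show that one can find a point $w^{(i)} \in G_i \cup G_{i+1}$ near the shared face and connect $v^{(i)}$ to $w^{(i)}$ and $w^{(i)}$ to $v^{(i+1)}$ by nearest-neighbor paths in $\mathbb{L}_{m-1}$ that stay inside $G_i \cup G_{i+1}$. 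The point is that to move a path through a box one only needs to step by $\lambda_{m-1}$ at a time while staying in the good set, and the net property guarantees that at each stage a good neighbor (or a good point within distance $5\sigma_{m-1}\lambda_{m-1}$, bridgeable since that block too must contain a good point forced by the separation condition) is available; Condition \ref{cond:c1} ($\mu_n \geq 100\sigma_n$, $\sigma_n \geq 2$) provides exactly the room needed for these local detours not to leave the union of the two boxes. Concatenating these segments over $i = 1,\dots,k-1$ and removing loops yields a nearest-neighbor path $\tilde\gamma$ in $\mathbb{L}_{m-1}$ of points good up to scale $n$ from $v^{(1)} \in \Lambda_m(u_1)$ to $v^{(k)} \in \Lambda_m(u_k)$.

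The main obstacle I anticipate is not conceptual but the bookkeeping in the second and third paragraphs: making precise the ``net'' statement for $G_i$ from the pairwise condition in \eqref{eq:def_renorm}, and then verifying that the local connecting paths genuinely stay inside $G_i \cup G_{i+1}$ using only the separation $5\sigma_{m-1}\lambda_{m-1}$ and Condition \ref{cond:c1} — in particular one must be careful that when bridging between a good point and a nearby good point one actually produces a \emph{nearest-neighbor} path in $\mathbb{L}_{m-1}$, which may require invoking the net property at an intermediate scale or subdividing the bridge into steps each of which lands in the good set. I would handle this by proving a clean auxiliary lemma: \emph{if} $S \subseteq \Lambda_m(u) \cap \mathbb{L}_{m-1}$ is such that no pair of $\mathbb{L}_{m-1}$-points of $\Lambda_m(u)$ at $\norm{\cdot}{\infty}$-distance $\geq 5\sigma_{m-1}\lambda_{m-1}$ both lie outside $S$, then any two points of $S$ in the concentric sub-box of side $\lambda_m - 10\sigma_{m-1}\lambda_{m-1}$ are joined by a nearest-neighbor $\mathbb{L}_{m-1}$-path inside $S$ that stays within $\Lambda_m(u)$ — and then apply it box by box, gluing across shared faces.
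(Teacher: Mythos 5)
Your plan follows the paper's argument in all essentials: reduce to a single descent step, use the pairwise condition in \eqref{eq:def_renorm} to constrain where $\mathcal{A}^{(m-1)}$ can fail inside each $\Lambda_m(u_i)$, then route a nearest-neighbor path around the bad region and glue across shared faces, with Condition \ref{cond:c1} supplying the room. You also correctly spell out the (implicit in the paper) point that $v\in\Lambda_m(u)$ is good up to scale $n$ if and only if $\mathcal{A}^{(m-1)}_v$ occurs, because the renormalization sequence of $v$ coincides with that of $u$ from scale $m$ upward.

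One caution on the intermediate ``elementary geometric consequence'' you extract. You phrase it as a \emph{net} property: every sub-box of side $\geq 5\sigma_{m-1}\lambda_{m-1}+1$ meets $G_i$. This is a true consequence of the pairwise condition, but it is strictly weaker than what is actually used, and it is not by itself enough to run the path construction -- a thin diagonal stripe of bad sites, for instance, can satisfy the net property while disconnecting the good set. What the pairwise condition really gives, and what the paper uses directly, is that the bad set $\Lambda_m(u_i)\setminus G_i$ has $\norm{\cdot}{\infty}$-diameter $<5\sigma_{m-1}\lambda_{m-1}$ and is therefore contained in a \emph{single} box $B(u_i)$ of that side; since $\mu_{m-1}\geq 100\sigma_{m-1}$, this box occupies a tiny fraction of $\Lambda_m(u_i)$ and its complement in $\Lambda_m(u_i)$ is trivially path-connected in $\mathbb{L}_{m-1}$. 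Your auxiliary lemma is stated with the full pairwise hypothesis, so you would recover this in the proof, but if you internalize only the ``net'' version you will get stuck.

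The only other difference is bookkeeping. You connect a central good point in each box out to a good point near the shared face, using the auxiliary lemma plus some additional work at the boundary; the paper instead marks, on each discrete face $F_i^\pm(u)$, a connected subset $\tilde F_i^\pm(u)\subset F_i^\pm(u)\cap G$ of relative size $\geq 3/4$, all mutually connected within $\Lambda_m(u)\setminus B(u)$, and then the gluing on a shared face is immediate by the pigeonhole $3/4+3/4>1$. The face-subset formulation is slightly cleaner because the gluing requires no extra argument near the boundary; your central-box auxiliary lemma as stated does not reach the shared face and would need to be extended. This is routine once the ``bad set in a single box'' observation is in place.
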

\begin{proof}
    Recall that for $u\in \mathbb{L}_m$, then the box $\Lambda_m(u)$ is defined as
    $$\Lambda_m(u)= u+[0,\lambda_m[^d\cap \mathbb{L}_{m-1}.$$
    For $1\leq i \leq d$ we denote by $F_i^+(u)$ and $F_i^-(u)$ the  discrete faces of the box $\Lambda_m(u)$, that is
    \begin{align}
            F_i^-(u) &:= u+ ([0,\lambda_m[^{i-1}\times \{0\}\times [0,\lambda_m[^{d-i})\cap \mathbb{L}_{m-1}. \\
    F_i^+(u)& := u+ ([0,\lambda_m[^{i-1}\times \{\lambda_m-\lambda_{m-1}\}\times [0,\lambda_m[^{d-i})\cap \mathbb{L}_{m-1}.
    \end{align}
    Observe that under Condition \ref{cond:c1}, then for any direction $1\leq i \leq d$, $$|F_i^+|=|F_i^-|=\left(\frac{\lambda_m}{\lambda_{m-1}}\right)^{d-1}=(\mu_{m-1})^{d-1}\geq 100^{d-1}(\sigma_{m-1})^{d-1}.$$
    Observe also that under the event $\mathcal{A}^{(m)}_u$ then the set of $v\in \Lambda_m(u)$ such that the event $\mathcal{A}^{(m-1)}_v$ does not occur is included in a box $B(u)$ of the form $B(u) = b_0(u)+[0,5\sigma_{m-1} \lambda_{m-1}]^d\cap \mathbb{L}_{m-1}$ for some $b_0(u)\in \Lambda_m(u)$.
    We have $$(5\sigma_{m-1})^{d-1}\leq \frac{1}{4} 100^{d-1}(\sigma_{m-1})^{d-1}.$$
    Therefore, on the event $\mathcal{A}^{(m)}_u$ then for any $1\leq i \leq d$ we may find subsets
    $\tilde{F}_i^+(u)\subset F_i^+(u)$ and $\tilde{F}_i^-(u) \subset F_i^-(u)$ such that
    \begin{itemize}
        \item $|\tilde{F}_i^+(u)|\geq \frac{3}{4}|F_i^+|$ and $|\tilde{F}_i^-(u)|\geq \frac{3}{4}|F_i^-|$, and
        \item any two points in $\bigcup_{1\leq i\leq d}\tilde{F}_i^-(u)\cup \tilde{F}_i^+(u)$ are connected by a nearest-neighbor path in $\Lambda_{m}(u)\setminus B(u).$
    \end{itemize}
    We may now complete the proof of Lemma \ref{lemma:der_renorm_extract_path}. In fact, given two points $u$ and $v$ that are nearest neighbors in $\mathbb{L}_m$ and that are good up to scale $n$, then the two boxes $u+[0,\lambda_m]^d$ and $v+[0,\lambda_m]^d$ share a side (which corresponds to some $u+F^\pm_i$ and to some $v+F^\mp_i.$ Since $|\tilde{F}^\pm_i(u)|\geq \frac{3}{4}|F_i^\pm|$ and $|\tilde{F}^\mp_i(v)|\geq \frac{3}{4}|F_i^\mp|$ and $3/4+3/4>1$ then one may find two vertices $x\in \tilde{F}^\pm_i(u)$ and $y\in \tilde{F}^\mp(v)$ that are adjacent in $\mathbb{L}_{m-1}.$ We may use this connection to connect all points of $\bigcup_{1\leq i\leq d}\tilde{F}_i^-(u)\cup \tilde{F}_i^+(u)\cup \tilde{F}_i^-(v)\cup \tilde{F}_i^+(v)$
    by paths that are orthogonally connected in $\mathbb{L}_{m-1}$ and that stay in $\Lambda_m(u)\setminus B(u)\cup\Lambda_m(v)\setminus B(v).$
    We have proven the case $k=2$ of the lemma but this argument is straightforward to extend to general $k\geq 2$.
\end{proof}
Now that we have seen how a renormalization scheme will be of interest we consider the problem of finding conditions under which the events $\mathcal{A}^{(n)}_u$ will have high probability. We introduce the following definition
\begin{definition}
\label{def:support}
Let $\mathcal{R}=(\mathcal{A}^{0}, (\mathcal{B}^{(n)})_{n\in \N^*})$ be a renormalization scheme associated to $(\lambda_0, \mu,\sigma)$. We associate to each event of $\mathcal{A}^{(0)}$ and $\mathcal{B}^{(n)}$ a \textit{formal support} which is a subset of $\R^d$.
\begin{itemize}
    \item For $u\in \mathbb{L}_0$, the formal support of the seed event $\mathcal{A}^{(0)}_u$ is defined as $u+[-\sigma_0 \lambda_0,\sigma_0 \lambda_0]^d$.
    \item For $n\in \N^*$ and $u\in \mathbb{L}_n$, the formal support of the auxiliary event $\mathcal{B}^{(n)}_u$ is defined as $u+[-\sigma_n \lambda_n,\sigma_n \lambda_n]^d.$
\end{itemize}
We extend this definition of formal support to all $\mathcal{A}^{(n)}_u$ for $n\in \N^*$ and $u\in \mathbb{L}_n$ as follows, the formal support of the event $\mathcal{A}^{(n)}_u$ is defined as the union of the formal supports of all events intervening in the recursive definition of $\mathcal{A}^{(n)}_u$ (see \eqref{eq:def_renorm}).
\end{definition}

We introduce the following condition
\begin{condition}
    \label{cond:c2}
    Let $\mathcal{R}=(\mathcal{A}^{(0)}, \mathcal{B}^{(n)})$ be a renormalization scheme associated to some $(\lambda_0,\mu,\sigma).$
    Let $\mathcal{C}$ be the reunion of the collection $\mathcal{A}^{(0)}$ of seed events and of the collections $\mathcal{B}^{(n)}$ of auxiliary events for all $n\in \N^*.$ For $D\subset \R^d$ we denote by $\mathcal{C}(D)$ the sub-collection of $\mathcal{C}$ obtained by only keeping events whose formal supports are included in $D$.
    We say that $\mathcal{R}$ verifies Condition \ref{cond:c2} if for any two subsets $D_1,D_2\subset \R$ then $D_1\cap D_2=\emptyset$ implies that the two collections $\mathcal{C}(D_1)$ and $\mathcal{C}(D_2)$ are independent.
\end{condition}
As a consequence of this condition, we show that two events $\mathcal{A}^{(n)}_u$ and $\mathcal{A}^{(n)}_v$ are independent if $u,v\in \mathbb{L}_n$ are far enough from one another.
\begin{lemma}
\label{lemma:indep_renorm}
Let $\mathcal{R}=(\mathcal{A}^{0}, (\mathcal{B}^{(n)})_{n\in \N^*})$ a renormalization scheme associated to some $(\lambda_0,\mu,\sigma)$. Assume that both Conditions \ref{cond:c1} and \ref{cond:c2} hold.
Let $n\in \N$. Let $u,v\in \mathbb{L}_n=\lambda_n\Z^d$. If $\norm{u_1-u_2}{\infty}\geq 5\sigma_n \lambda_{n}$ the two events $\mathcal{A}^{(n)}_u$ and $\mathcal{A}^{(n)}_{v}$ are independent.
\end{lemma}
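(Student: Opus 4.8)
The plan is to prove the claim by induction on $n$, tracking carefully the formal supports of the events $\mathcal{A}^{(n)}_u$. The base case $n=0$ is immediate: the formal support of the seed event $\mathcal{A}^{(0)}_u$ is $u+[-\sigma_0\lambda_0,\sigma_0\lambda_0]^d$, so if $\norm{u-v}{\infty}\geq 5\sigma_0\lambda_0$ the two formal supports are disjoint cubes, and Condition \ref{cond:c2} gives independence directly.

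For the inductive step, the key is a bound on the size of the formal support of $\mathcal{A}^{(n)}_u$. By Definition \ref{def:support} and the recursive formula \eqref{eq:def_renorm}, the formal support of $\mathcal{A}^{(n)}_u$ is the union of the formal support of $\mathcal{B}^{(n)}_u$ (which is $u+[-\sigma_n\lambda_n,\sigma_n\lambda_n]^d$) together with the formal supports of all the events $\mathcal{A}^{(n-1)}_{u_1}$ for $u_1\in\Lambda_n(u)$. Since every $u_1\in\Lambda_n(u)$ satisfies $\norm{u_1-u}{\infty}\leq \lambda_n$, and — by the inductive hypothesis applied qualitatively, or rather by iterating the support estimate — the formal support of each $\mathcal{A}^{(n-1)}_{u_1}$ is contained in a cube centered at $u_1$ of half-side at most $C\sigma_{n-1}\lambda_{n-1}$ for a suitable absolute constant, one gets that the formal support of $\mathcal{A}^{(n)}_u$ is contained in $u+[-(\sigma_n\lambda_n + \lambda_n + C\sigma_{n-1}\lambda_{n-1}),\,\cdot\,]^d$. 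Under Condition \ref{cond:c1} one has $\lambda_n = \lambda_{n-1}\mu_{n-1} \geq 100\sigma_{n-1}\lambda_{n-1}$, so $\sigma_{n-1}\lambda_{n-1}\leq \lambda_n/100$ and $\lambda_n\leq \sigma_n\lambda_n$ (since $\sigma_n\geq 2$), whence the formal support of $\mathcal{A}^{(n)}_u$ is contained in the cube $u + [-\tfrac{5}{2}\sigma_n\lambda_n,\tfrac{5}{2}\sigma_n\lambda_n]^d$, say. Thus if $\norm{u-v}{\infty}\geq 5\sigma_n\lambda_n$, the formal supports of $\mathcal{A}^{(n)}_u$ and $\mathcal{A}^{(n)}_v$ are disjoint. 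Since $\mathcal{A}^{(n)}_u$ is a finite Boolean combination of events in $\mathcal{C}$ whose formal supports lie in the first cube, and likewise for $\mathcal{A}^{(n)}_v$ and the second cube, Condition \ref{cond:c2} implies the two collections, hence the two events, are independent.

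The only slightly delicate point — and the place where one must be careful rather than the place where there is a genuine obstacle — is pinning down the constant $C$ in the support estimate so that the induction closes, i.e. so that one really does get containment in a cube of half-side $\leq \tfrac{5}{2}\sigma_n\lambda_n$ (or whatever threshold is consistent with the constant $5$ appearing in the statement and in \eqref{eq:def_renorm}). This is purely a bookkeeping computation with the geometric sums $\sum_{k\leq n}\sigma_k\lambda_k$, which converges fast because of the factor $\mu_k\geq 100$ in Condition \ref{cond:c1}; I would state the support estimate as an auxiliary claim, prove it by the same induction, and then read off Lemma \ref{lemma:indep_renorm} as a corollary. I do not anticipate any conceptual difficulty: everything is driven by the separation of formal supports plus Condition \ref{cond:c2}, with Condition \ref{cond:c1} supplying the geometric room.
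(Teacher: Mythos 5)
Your proposal takes the same route as the paper: an induction showing that the formal support of $\mathcal{A}^{(n)}_u$ is contained in a cube $u+[-C\sigma_n\lambda_n,C\sigma_n\lambda_n]^d$, with Condition \ref{cond:c1} absorbing the lower-scale contributions and Condition \ref{cond:c2} then yielding independence once the cubes are disjoint. One small remark on your bookkeeping: the half-side $\tfrac{5}{2}\sigma_n\lambda_n$ is exactly borderline, since at distance exactly $5\sigma_n\lambda_n$ two such cubes share a face rather than being disjoint; if you run the induction with the tighter constant the paper uses, namely $2\sigma_n\lambda_n$ (which is what Condition \ref{cond:c1} actually delivers, since $\lambda_n+2\sigma_{n-1}\lambda_{n-1}\leq 2\sigma_n\lambda_n$), you get comfortable strict separation.
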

\begin{proof}
We observe that by Definition \ref{def:support} and Condition \ref{cond:c2} it is enough to show that the formal support of $\mathcal{A}^{(n)}_u$ and $\mathcal{A}^{(n)}_u$ are disjoint.
    We argue that for each $n\in \N$ and $u\in \mathbb{L}_n$ the support of $\mathcal{A}_u^{(n)}$is included in
    $$u+[-2\sigma_n \lambda_n,2\sigma_n \lambda_n]^d.$$
    This property is true for $n=0$ by definition of the formal support of the seed events.
    If we assume that this property is true up to $n-1$, then if $u\in \mathbb{L}_n$ we have by \eqref{eq:def_renorm} that the formal support of $\mathcal{A}_u^{(n)}$ is included in the union of the formal support of $\mathcal{B}_u^{(n)}$ and the formal supports of the $\mathcal{A}^{(n-1)}_{v}$ for $v\in \Lambda_n(u)$.
    Therefore the formal support of $\mathcal{A}_{u}^{(n)}$ is included in
    $$u+\left([-\sigma_n \lambda_n,\sigma_n \lambda_n]^d \cup [-2\sigma_{n-1} \lambda_{n-1}, \lambda_n+2\sigma_{n-1} \lambda_{n-1}]^d\right).$$
    By Condition \ref{cond:c1}, then $\frac{\sigma_{n-1}\lambda_{n-1}}{\lambda_n} = \frac{\sigma_{n-1}}{\mu_{n-1}}\leq \frac{1}{100}.$ Moreover, we have $\sigma_n\geq 2$. Therefore,
    $$\lambda_n + 2\sigma_{n-1}\lambda_{n-1}\leq 2\sigma_n\lambda_n \text{ and } -2\sigma_{n-1}\lambda_{n-1}\geq -2\lambda_n\sigma_n.$$
    This implies that the formal support of $\mathcal{A}_{u}^{(n)}$ is included in
    $$u+[-2\sigma \lambda_n,2\sigma \lambda_n]^d,$$
    which finishes the induction.
    We now conclude the proof. Let $u,v$ be two vertices of $\mathbb{L}_n$ such that $\norm{u-v}{\infty}\geq 5\sigma_n \lambda_n$, then the formal supports of $\mathcal{A}_u^{(n)}$ and $\mathcal{A}^{(n)}_v$ do not intersect which implies independence.
\end{proof}

We now prove that under some additional conditions then the events $\mathcal{A}^{(n)}$ will have high probability. More precisely consider the following conditions.
\begin{condition}
    \label{cond:c3}
    We say that a sequence $\mu=(\mu_n)_{n\in \N}$ verifies Condition \ref{cond:c3} if
    \begin{equation}
        \sum_{n=0}^\infty \frac{\log_2(\mu_n)}{2^n}<\infty,
    \end{equation}
    where $\log_2$ denotes the logarithm in base $2$.
\end{condition}

\begin{condition}
    \label{cond:c4}
    Let $\mathcal{R}=(\mathcal{A}^{(0)}, \mathcal{B}^{(n)})$ be a renormalization scheme associated to some $(\lambda_0,\mu,\sigma).$ Let $\varepsilon>0.$
    We say that $\mathcal{R}$ satisfies Condition \ref{cond:c4} for $\varepsilon$ if
        \begin{equation}
    \label{eq:c3}
   \sup_{u\in \mathbb{L}_0}\Proba{\left(\mathcal{A}^{(0)}_u\right)^c} < \varepsilon \quad \text{ and}
\end{equation}
\begin{equation}
    \label{eq:c4}
    \forall n\in \N^*,\ \sup_{u\in \mathbb{L}_n}\Proba{\left(\mathcal{B}^{(n)}_u\right)^c} < 2^{-\frac{1}{\varepsilon}2^n}.
\end{equation}
\end{condition}

\begin{proposition}
\label{prop:renorm_high_prob}
Let $d\geq 2$. Let $\mu=(\mu_n)_n$ and $\sigma=(\sigma_n)_n$ be two sequences of positive integers such that Conditions \ref{cond:c1} and \ref{cond:c3} hold.
There exists $\varepsilon_0>0$ depending only on $d$, $\mu$ and $\sigma$ such that the following holds.
For any $\lambda_0\in \N^*$ and for any $\mathcal{R}=(\mathcal{A}^{(0)}, (\mathcal{B}^{(n)})_{n\in \N^*})$ a renormalization scheme of dimension $d$ associated to $(\lambda_0, \mu,\sigma)$, if $\mathcal{R}$ verifies Condition \ref{cond:c2} and Condition \ref{cond:c4} for $\varepsilon_0$ then,
\begin{equation}
    \label{eq:conclu_renorm}
    \forall n\in\N,\ \sup_{u\in \mathbb{L}_n} \Proba{\left(\mathcal{A}^{(n)}_u\right)^c}< 2^{-2^n}.
\end{equation}
\end{proposition}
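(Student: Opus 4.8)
The plan is to prove \eqref{eq:conclu_renorm} by induction on $n$, bounding $\Proba{(\mathcal{A}^{(n)}_u)^c}$ in terms of $\Proba{(\mathcal{A}^{(n-1)}_v)^c}$ and $\Proba{(\mathcal{B}^{(n)}_u)^c}$ via the recursive definition \eqref{eq:def_renorm}. The base case $n=0$ is immediate from \eqref{eq:c3}, provided $\varepsilon_0$ is chosen at most $1/2$. For the inductive step, fix $u\in \mathbb{L}_n$ and write $p_{n-1}:=\sup_{v\in\mathbb{L}_{n-1}}\Proba{(\mathcal{A}^{(n-1)}_v)^c}$. By \eqref{eq:def_renorm} and a union bound, the complement $(\mathcal{A}^{(n)}_u)^c$ is contained in $(\mathcal{B}^{(n)}_u)^c$ together with the union, over the (at most $\mu_{n-1}^{2d}$) pairs $u_1,u_2\in\Lambda_n(u)$ with $\|u_1-u_2\|_\infty\ge 5\sigma_{n-1}\lambda_{n-1}$, of the events $(\mathcal{A}^{(n-1)}_{u_1})^c\cap(\mathcal{A}^{(n-1)}_{u_2})^c$. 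By Lemma \ref{lemma:indep_renorm} (which applies because $\|u_1-u_2\|_\infty\ge 5\sigma_{n-1}\lambda_{n-1}$ and Conditions \ref{cond:c1}, \ref{cond:c2} hold), each such intersection has probability at most $p_{n-1}^2$. Hence
\begin{equation}
    p_n \;\le\; \sup_{u\in\mathbb{L}_n}\Proba{(\mathcal{B}^{(n)}_u)^c} \;+\; \mu_{n-1}^{2d}\, p_{n-1}^2 \;\le\; 2^{-\frac{1}{\varepsilon_0}2^n} + \mu_{n-1}^{2d}\,p_{n-1}^2 ,
\end{equation}
using \eqref{eq:c4}.

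The crux is then to show this recursion, together with Condition \ref{cond:c3}, forces $p_n < 2^{-2^n}$ for all $n$ once $\varepsilon_0$ is small enough. I would make the quantitative ansatz $p_n \le 2^{-a_n 2^n}$ for a suitable sequence $a_n$ with $a_0 = \log_2(1/\varepsilon_0)$, say, and track how $a_n$ evolves. Plugging the ansatz in: $\mu_{n-1}^{2d} p_{n-1}^2 \le 2^{2d\log_2\mu_{n-1}} 2^{-2a_{n-1}2^{n-1}\cdot 2} = 2^{-(2a_{n-1} - 2d\,2^{-(n-1)}\log_2\mu_{n-1})2^n}$, and the $(\mathcal{B}^{(n)}_u)^c$ term is $2^{-(1/\varepsilon_0)2^n}$. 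So if $\varepsilon_0$ is small enough that $1/\varepsilon_0 \ge a_n + 1$ for all $n$ (which will hold if the $a_n$ stay bounded), the sum of the two terms is at most $2\cdot 2^{-(\min(2a_{n-1} - 2d\,2^{-(n-1)}\log_2\mu_{n-1},\,1/\varepsilon_0))2^n} \le 2^{-(a_n)2^n}$ provided $a_n \le 2a_{n-1} - 2d\,2^{-(n-1)}\log_2\mu_{n-1} - 2^{-n+1}$ (the extra $2^{-n+1}$ absorbs the factor $2$). Unwinding the recursion $a_n = 2a_{n-1} - c_{n-1}$ with $c_{n-1} := 2d\,2^{-(n-1)}\log_2\mu_{n-1} + 2^{-n+1}$ gives $a_n = 2^n\bigl(a_0 - \sum_{k=0}^{n-1} 2^{-k-1}c_k\bigr)$, and $\sum_k 2^{-k-1}c_k \le \sum_k \bigl(d\,2^{-2k}\log_2\mu_k + 2^{-2k}\bigr) < \infty$ precisely by Condition \ref{cond:c3} (even the weaker $\sum 2^{-2k}\log_2\mu_k<\infty$ suffices, but \ref{cond:c3} certainly implies it). Call this finite sum $S$. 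Choosing $\varepsilon_0$ with $\log_2(1/\varepsilon_0) = a_0 > S + 1$ makes $a_n \ge 2^n(a_0 - S) \ge 1$ for all $n$, hence $p_n \le 2^{-a_n 2^n} \le 2^{-2^n}$; one also checks $1/\varepsilon_0 = a_0 \ge a_n + 1$ is automatic since $a_0 \le a_n$... — here I need to be slightly careful, as $a_n$ may exceed $a_0$, so instead I fix $\varepsilon_0$ so that $1/\varepsilon_0$ dominates the (bounded) quantity $a_0 + 1$ and redo the bookkeeping keeping $a_n$'s growth controlled, or simply note $a_n 2^n$ is what matters and $1/\varepsilon_0\cdot 2^n$ is the competing bound; choosing $\varepsilon_0$ small handles both terms simultaneously.

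The main obstacle is purely the bookkeeping in this last paragraph: getting the constants in the ansatz $p_n\le 2^{-a_n 2^n}$ to close up cleanly, so that the "doubling minus a summable correction" recursion for $a_n$ keeps $a_n$ bounded below by a positive constant while simultaneously ensuring the $(\mathcal{B}^{(n)})^c$ contribution (governed by $1/\varepsilon_0$) never dominates. No single step is deep — the independence input (Lemma \ref{lemma:indep_renorm}), the combinatorial count $\mu_{n-1}^{2d}$ of pairs, and the summability (Condition \ref{cond:c3}) each do exactly one job — but one must choose $\varepsilon_0$ (equivalently $a_0$) strictly larger than the finite sum $S$ coming from Condition \ref{cond:c3}, and then verify the induction with that choice; the value of $\varepsilon_0$ depends only on $d$, $\mu$, $\sigma$ as claimed, since $S$ does.
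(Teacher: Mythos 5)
Your overall strategy — reduce to the recursion $p_n \le \Proba{(\mathcal{B}^{(n)}_u)^c} + \mu_{n-1}^{2d}\,p_{n-1}^2$ via a union bound and Lemma \ref{lemma:indep_renorm}, then close an induction with an ansatz $p_n \le 2^{-a_n 2^n}$ whose viability comes from Condition \ref{cond:c3} — is exactly the paper's strategy; the paper's parametrization $p_n \le 2^{-2^n}/a_n$ with $a_n = \tfrac12 a_{n-1}^2\mu_{n-1}^{-2d}$ is the same thing after writing $a_n' := 1 + 2^{-n}\log_2 a_n$. However, there is a concrete arithmetic error in your inductive step that you notice but do not repair. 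From $p_{n-1}\le 2^{-a_{n-1}2^{n-1}}$ one gets
\begin{equation}
    \mu_{n-1}^{2d}\,p_{n-1}^2 \;\le\; 2^{2d\log_2\mu_{n-1}}\cdot 2^{-2a_{n-1}2^{n-1}} \;=\; 2^{-\left(a_{n-1}-2d\,2^{-n}\log_2\mu_{n-1}\right)2^n},
\end{equation}
because the factor $2$ coming from squaring is absorbed into passing from $2^{n-1}$ to $2^n$: the coefficient of $2^n$ in the exponent is $a_{n-1}$, not $2a_{n-1}$. The right recursion is therefore additive, $a_n \le a_{n-1} - 2^{-n}\bigl(2d\log_2\mu_{n-1}+1\bigr)$ (the extra $2^{-n}$ absorbs the factor $2$ from the union of two terms), not the doubling recursion $a_n = 2a_{n-1}-c_{n-1}$ you wrote. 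This matters: your doubling recursion gives $a_n = 2^n(a_0 - S)$, which diverges whenever $a_0 > S$ (and collapses to $0$ when $a_0 = S$). With $a_n\to\infty$ the requirement $1/\varepsilon_0 \ge a_n + 2^{-n}$, needed so that the $(\mathcal{B}^{(n)})^c$-term $2^{-2^n/\varepsilon_0}$ does not dominate $2^{-a_n 2^n}\sim 2^{-(a_0-S)4^n}$, fails for every fixed $\varepsilon_0>0$ once $n$ is large — so the induction does not close, and the closing remark that ``choosing $\varepsilon_0$ small handles both terms simultaneously'' does not save it. With the corrected additive recursion, $a_n = a_0 - \sum_{k<n}c_k$ stays in $[a_0-\Sigma_0,\,a_0]$ where $\Sigma_0 := \sum_{n\ge 1}2^{-n}(1+2d\log_2\mu_{n-1})<\infty$ by Condition \ref{cond:c3}; taking $a_0 := 1+\Sigma_0$ gives $a_n\ge 1$ for all $n$, and taking $\varepsilon_0 \le 2^{-a_0}$ handles both the base case $p_0<\varepsilon_0\le 2^{-a_0}$ and, since $2^{a_0}\ge a_0+1 \ge a_n + 2^{-n}$, the comparison with the $(\mathcal{B}^{(n)})^c$-term. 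This corrected version is precisely the paper's proof (the paper chooses $\varepsilon_0 < \min\bigl(\tfrac{1}{2+\Sigma_0},\,2^{-1-\Sigma_0}\bigr)$, which is the same threshold).
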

\begin{proof}
Let $\mu=(\mu_n)_{n\in \N}$ and $\sigma=(\sigma_n)_{n\in \N}$ be such that Conditions \ref{cond:c1} and \ref{cond:c3} hold. Let $\lambda_0\in \N^*$ and $\mathcal{R}$ be a renormalization scheme associated to $(\lambda_0,\mu,\sigma)$ satisfying Condition \ref{cond:c2}.
 We may apply Lemma \ref{lemma:indep_renorm} and we see that any two events $\mathcal{A}^{(n-1)}_{u_1}$ and $\mathcal{A}^{(n-1)}_{u_2}$ are independent as soon as $\norm{u_1-u_2}{\infty}\geq 5\sigma_{n-1} L_{n-1}$.

For $n\in \N$ denote by $p_n$ the following:
\begin{equation}
    p_n := \sup_{u\in \mathbb{L}_n}\Proba{\left(\mathcal{A}_u^{(n)}\right)^c}.
\end{equation}
We aim to prove
\begin{equation}
    \forall n\in \N,\ p_n\leq 2^{-2^n}.
\end{equation}
Using the recursive definition of $\mathcal{A}^{(n)}$ (see \eqref{eq:def_renorm}), then by doing an union bound and applying the independence from Lemma \ref{lemma:indep_renorm} we obtain

\begin{equation}
    \label{eq:rec}
    \forall n\in \N^*,\ p_n \leq \sup_{u\in \mathbb{L}_n}\Proba{\left(\mathcal{B}^{(n)}_u\right)^c} + \left(\mu_{n-1}\right)^{2d}p_{n-1}^2.
\end{equation}
Unfortunately, it is not possible to directly show the conclusion by induction using \eqref{eq:rec}. One needs to modify a little the induction property.

We denote the logarithm in base $2$ by $\log_2$. Write
$$\Sigma_0 := \sum_{n=1}^\infty \frac{1+2d\log_2(\mu_{n-1})}{2^n}.$$
By Condition \ref{cond:c3}, $\Sigma_0$ is a positive constant depending only on $d$ and $\mu$.
Define a sequence $(a_n)_{n\in \N}$ such that $a_0:= 2^{\Sigma_0}$ and 
\begin{equation}
\label{eq:rec_sequence_a}
    \forall n\in \N^*,\ a_{n}:=\frac{1}{2}a_{n-1}^2(\mu_{n-1})^{-2d}.
\end{equation}
We claim the following fact
\begin{claim}
    \label{claim:a_sequence}
    $\forall n\in \N,\ 0\leq \log_2(a_n)\leq \log_2(a_0)2^n.$
\end{claim}
\begin{proof}
Taking the logarithm in \eqref{eq:rec_sequence_a} we get
$$\forall n\in \N^*,\ \log_2(a_n)=2\log_2(a_{n-1})-1-2d\log_2(\mu_{n-1}).$$
We divide by $2^n$ to obtain 
$$\forall n\in \N^*,\ \frac{\log_2(a_n)}{2^n}=\frac{\log_2(a_{n-1})}{2^{n-1}}-\frac{1+2d\log_2(\mu_{n-1})}{2^n}.$$
By cancellation we find
$$\forall n\in \N,\  \frac{\log_2(a_0)}{2^0}-\Sigma_0\leq \frac{\log_2(a_n)}{2^n}\leq \frac{\log_2(a_0)}{2^0}.$$
Since $\log_2(a_0)=\Sigma_0$ (by definition of $a_0$) we get the result.
\end{proof}

Choose $\varepsilon_0>0$ small enough (depending only on $d$, $\mu$) such that
\begin{equation}
    \varepsilon_0 < \min\left(\frac{1}{2+\Sigma_0}, \frac{1}{2a_0}\right).
\end{equation}
We will prove that if $\mathcal{R}$ verifies Condition \ref{cond:c4} for $\varepsilon_0$ then we have the conclusion. More precisely we will show by induction that
\begin{equation}
    \forall n\in \N,\ p_n \leq \frac{2^{-2^n}}{a_n}.
\end{equation}
Indeed, the property is true for $n=0$ by \eqref{eq:c3} and the fact that $\varepsilon_0<\frac{1}{2a_0}.$ If the property holds for $n-1$ then by \eqref{eq:rec}, \eqref{eq:rec_sequence_a} and \eqref{eq:c4} we have
\begin{equation}
    p_n \leq 2^{-\frac{1}{\varepsilon_0}2^n}+(\mu_{n-1})^{2d}\left(\frac{2^{-2^{n-1}}}{a_{n-1}}\right)^2\leq 2^{-\frac{1}{\varepsilon_0}2^n} + \frac{1}{2}\frac{2^{-2^n}}{a_n}.
\end{equation}
To conclude that $p_n\leq \frac{2^{-2^n}}{a_n}$ it is therefore enough to check that
$$2^{-\frac{1}{\varepsilon_0}2^n}\leq \frac{1}{2}\frac{2^{-2^n}}{a_n}.$$
Taking the logarithm, this condition is equivalent to
$$-\frac{1}{\varepsilon_0}2^n \leq -2^n-1-\log_2(a_n) \Leftrightarrow \varepsilon_0\leq\frac{1}{1+\frac{1}{2^n}+\frac{\log_2(a_n)}{2^n}}.$$
According to Fact \ref{claim:a_sequence}, we know that for $n\in \N^*$, $\frac{\log_2(a_n)}{2^n}\leq \log_2(a_0)=\Sigma_0.$
Moreover we also have $\frac{1}{2^n}\leq 1$. Therefore we get, $$\varepsilon_0<\frac{1}{2+\Sigma_0}\leq \frac{1}{1+\frac{1}{2^n}+\frac{\log_2(a_n)}{2^n}}.$$ This concludes the induction and we have $$\forall n\in\N,\ p_n \leq \frac{2^{-2^n}}{a_n}.$$
Since we have seen in Fact \ref{claim:a_sequence} that $\log_2(a_n)\geq 0$, then $a_n\geq 1$ and we get
$$\forall n\in\N,\ p_n \leq 2^{-2^n},$$
which is the conclusion of Proposition \ref{prop:renorm_high_prob}.
\end{proof}
We comment that a key element of the statement of Proposition \ref{prop:renorm_high_prob} is that $\varepsilon_0>0$ may be chosen independently from $\lambda_0$. As such, a natural way to use Proposition \ref{prop:renorm_high_prob} is to take $(\mu,\sigma)$ satisfying Conditions \ref{cond:c1} and \ref{cond:c3} and to work with a sequence of a renormalization schemes $(\mathcal{R}_{\lambda})_{\lambda\geq 1}$ such that $\mathcal{R}_\lambda$ is associated to $(\lambda, \mu, \sigma)$ (all $\mathcal{R}_\lambda$ share the same $(\mu,\sigma)$) and $\mathcal{R}_\lambda$ satisfies Condition \ref{cond:c2}. The strategy is then to check that Condition \ref{cond:c4} is verified asymptotically when $\lambda$ goes to infinity (that is we show that for every $\varepsilon>0$ then for $\lambda$ big enough, $\mathcal{R}_\lambda$ satisfies Condition \ref{cond:c4} for this $\varepsilon$). This will ensure that we may find $\lambda_0$ for which $\mathcal{R}_{\lambda_0}$ satisfies the conclusion of Proposition \ref{prop:renorm_high_prob}.

\section{Proof of Theorem \ref{thm:continuous_l_grand}}
\label{sec:4}
This section is dedicated to the proof of Theorem \ref{thm:continuous_l_grand}. This proof essentially relies on the renormalization argument developed in Section \ref{sec:renorm} together with the construction of the finite range approximations of the field $\alpha^f$ (see Definition \ref{def:approximations}). However we will also need to use some standard results of percolation. We begin by introducing a few notations.

\begin{definition}
Given a function $g : \R^2 \to \R$ and a level $\ell\in \R$, we denote by $\mathcal{E}_\ell(g)$ the set,
\begin{equation}
    \mathcal{E}_\ell(g) := \{x\in \R^2\ |\ g(x)+\ell\geq 0\}.
\end{equation}
\end{definition}
We recall the notion of crossing events.
\begin{definition}
\label{def:cross}
Let $\mathcal{R}=[a,b]\times [c,d]\subset \R^2.$ be a rectangle (with $a<b$ and $c<d$).
We denote by $\cross^h(\mathcal{R})$ the set of functions $g : \R^2 \to \R$ such that there exists a connected component of $\mathcal{E}_0(g)\cap \mathcal{R}$ that intersects both sides $\{a\}\times [c,d]$ and $\{b\}\times [c,d]$. Similarly we denote by $\cross^v(\mathcal{R})$ the set of functions $g : \R^2 \to \R$ such that there exists a connected component of $\mathcal{E}_0(g)\cap \mathcal{R}$ that intersects both sides $[a,b]\times \{c\}$ and $[a,b]\times \{d\}.$
\end{definition}
In the following we will freely make use of the following theorem.
\begin{theorem}
\label{thm:crossings}
Let $g : \R^2 \to \R$ be a random field that admits a $(a,b)$-sequence of finite range approximations with $a>0$ and $b>1$ (see Definition \ref{def:approximations}). For $\rho\geq 1$ let $\mathcal{R}^h_\rho := [0,\rho]\times [0,1]$ and $\mathcal{R}^v_\rho = [0,1]\times [0,\rho]$.
Let $\ell\in \R$. Assume that there exists $\rho_0>1$ such that the following holds
\begin{align}
    \inf_{x\in \R^2}\Proba{g+\ell\in \cross^h(x+\lambda\mathcal{R}^h_{\rho_0})}\xrightarrow[\lambda\to \infty]{}1, \\
    \inf_{x\in \R^2}\Proba{g+\ell\in \cross^v(x+\lambda\mathcal{R}^v_{\rho_0})}\xrightarrow[\lambda\to \infty]{}1.
\end{align}
Then, for any $\ell'>\ell$, almost surely, the set $\mathcal{E}_{\ell'}(g) = \{g+\ell'\geq 0\}$ contains a unique unbounded component and the set $\{g+\ell'<0\}$ does not contain any unbounded connected component.
\end{theorem}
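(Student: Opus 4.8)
The plan is to derive the statement from the renormalization scheme of Section~\ref{sec:renorm}, fed with the $(a,b)$-sequence of finite range approximations $(g_R)_{R\in\N^*}$ of $g$ supplied by Definition~\ref{def:approximations}, and to close the argument with the standard gluing constructions of Appendix~\ref{sec:appendixA}. Fix $\ell'>\ell$ and put $\varepsilon:=(\ell'-\ell)/3>0$. The elementary fact used throughout is that if $\sup_D|g-g_R|\le\eta$ on a set $D$ then $\mathcal{E}_\ell(g)\cap D\subseteq\mathcal{E}_{\ell+\eta}(g_R)\cap D\subseteq\mathcal{E}_{\ell+2\eta}(g)\cap D$, so that a crossing (in the sense of Definition~\ref{def:cross}) of a rectangle by $g+\ell$ is also a crossing by $g_R+(\ell+\eta)$, and a crossing by $g_R+(\ell+\eta)$ is also one by $g+(\ell+2\eta)$. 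Combining this with the tail bound of Definition~\ref{def:approximations} and a union bound over the $O(\lambda^2)$ unit squares meeting a rectangle of scale $\lambda$, the crossing hypotheses for $g+\ell$ upgrade, for any choice $R=R(\lambda)$ with $R^b\gg\log\lambda$, to $\inf_x\Proba{g_R+(\ell+\varepsilon)\in\cross^{h}(x+\lambda\mathcal{R}^h_{\rho_0})}\to1$ and similarly for $\cross^v$, as $\lambda\to\infty$.

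I would then set up the renormalization scheme with $\sigma_n:=2$ and $\mu_n:=M$ for a large integer constant $M$ (depending only on $a,b,\rho_0$); Conditions~\ref{cond:c1} and~\ref{cond:c3} hold, and we let $\varepsilon_0>0$ be the constant produced by Proposition~\ref{prop:renorm_high_prob} for $d=2$ and these $(\mu,\sigma)$. At a base scale $\lambda_0$ (to be taken large) and with $R_0:=\lceil\lambda_0^{1/2}\rceil$ (so $R_0\le\lambda_0$), define the seed event $\mathcal{A}^{(0)}_u$, $u\in\mathbb{L}_0$, as the event that $g_{R_0}+(\ell+\varepsilon)$ realizes a fixed finite pattern of horizontal and vertical $\rho_0$-rectangle crossings, well inside $u+[-\sigma_0\lambda_0,\sigma_0\lambda_0]^2$, chosen (via the gluing lemmas of Appendix~\ref{sec:appendixA}) so that the resulting piece of $\mathcal{E}_{\ell+\varepsilon}(g_{R_0})$ is connected, crosses the cell both ways, and merges with that of any adjacent cell also seeing its seed event; by the previous paragraph and a union bound over the $O(1)$ rectangles involved, $\sup_u\Proba{(\mathcal{A}^{(0)}_u)^c}\to0$ as $\lambda_0\to\infty$. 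For $n\ge1$ set $\delta_n:=\varepsilon 2^{-n-1}$ and $\mathcal{B}^{(n)}_u:=\{\sup_{u+[-\sigma_n\lambda_n/2,\sigma_n\lambda_n/2]^2}|g_{R_n}-g_{R_{n-1}}|\le\delta_n\}$, where $R_n\in\N^*$ is chosen with $R_n\le\lambda_n=\lambda_0M^n$ (possible once $M$ is large) yet with $R_{n-1}^b$ large enough that the tail bound of Definition~\ref{def:approximations}, applied to $g_{R_n}-g_{R_{n-1}}$ over a box of side $O(\lambda_n)$, gives $\sup_u\Proba{(\mathcal{B}^{(n)}_u)^c}<2^{-2^n/\varepsilon_0}$. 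Since $\mathcal{A}^{(0)}_u$ reads only $g_{R_0}$, and $\mathcal{B}^{(n)}_u$ only $g_{R_n},g_{R_{n-1}}$, inside its formal support with a margin $\ge R_n$ to the boundary, the (joint) finite range of the approximations gives Condition~\ref{cond:c2}. Taking $\lambda_0$ so large that $\sup_u\Proba{(\mathcal{A}^{(0)}_u)^c}<\varepsilon_0$, Condition~\ref{cond:c4} holds for $\varepsilon_0$ and Proposition~\ref{prop:renorm_high_prob} yields $\sup_{u\in\mathbb{L}_n}\Proba{(\mathcal{A}^{(n)}_u)^c}<2^{-2^n}$ for every $n$.

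The rest is routine. If $u\in\mathbb{L}_0$ is $n$-good, telescoping the auxiliary events gives $\sup_{\text{cell}}|g_{R_n}-g_{R_0}|\le\sum_{k=1}^n\delta_k<\varepsilon$, and with $\mathcal{A}^{(0)}_u$ this shows $g_{R_n}+(\ell+2\varepsilon)$ realizes the connected crossing pattern of the cell at $u$; on the event $\{\sup|g-g_{R_n}|\le\varepsilon\}$ over the relevant region this pattern is then also realized by $g+\ell'$ (recall $\ell+3\varepsilon=\ell'$). For each $N$ let $F_N$ be the event that every $\mathbb{L}_N$-cell inside $[-\lambda_{N+2},\lambda_{N+2}]^2$ is $N$-good; then $\Proba{F_N^c}$ is at most a polynomial in $\lambda_N$ times $2^{-2^N}$, hence summable, and likewise $\Proba{\sup_{[-2\lambda_{N+2},2\lambda_{N+2}]^2}|g-g_{R_N}|>\varepsilon}$ is summable by Definition~\ref{def:approximations}. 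On $F_N$, Lemma~\ref{lemma:der_renorm_extract_path} (applied to the paths and circuits of $N$-good cells) extracts a connected web of $N$-good $\mathbb{L}_0$-cells filling the annular region, whose crossing pieces of $\mathcal{E}_{\ell'}(g)$ glue into a connected circuit of $\mathcal{E}_{\ell'}(g)$ around the origin, joined to the corresponding circuit at scale $N-1$. By Borel--Cantelli these nested, linked circuits exist almost surely for all large $N$, and their union is an unbounded connected component of $\mathcal{E}_{\ell'}(g)$. An unbounded connected subset of $\{g+\ell'<0\}$ would have to cross one of these circuits, which is impossible, so $\{g+\ell'<0\}$ has no unbounded component; uniqueness then follows from the standard planar duality argument, or directly from the observation that for large $N$ the construction yields a unique crossing cluster of $\mathcal{E}_{\ell'}(g)$ in $[-\lambda_N,\lambda_N]^2$, which every unbounded component of $\mathcal{E}_{\ell'}(g)$ must meet.

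I expect the main obstacle to be the complete absence of the FKG inequality and of any rotational or $\pm$-symmetry for $g$: the upgrade of ``crossings of a single aspect ratio $\rho_0$'' to the rich, connected, self-reinforcing crossing patterns required of the seed events cannot use the square-root trick, and must be carried out entirely through the independence of the finite range approximations together with the combinatorial gluing of Appendix~\ref{sec:appendixA}. Intertwined with this is the need to choose the approximation ranges $R_n$ in the Goldilocks window that keeps them small enough ($R_n\le\lambda_n$, so that the formal-support margins demanded by Condition~\ref{cond:c2} survive) while making $R_n^b$ large enough to meet the doubly-exponential requirement~\eqref{eq:c4} of Condition~\ref{cond:c4}; it is here, and in the summability of the error terms in the final Borel--Cantelli step, that the quantitative strength of the approximation (in particular $b>1$) is used. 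A minor but ubiquitous nuisance is the bookkeeping of the three level shifts $\ell\to\ell+\varepsilon\to\ell+2\varepsilon\to\ell+3\varepsilon=\ell'$ produced by replacing $g$ with $g_{R_n}$ and back.
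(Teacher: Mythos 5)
Your route is genuinely different from the paper's. The paper proves Theorem~\ref{thm:crossings} entirely inside Appendix~\ref{sec:appendixA} via the bootstrap Proposition~\ref{prop:crossings}: one fixes a single seven-rectangle gluing pattern (Figure~\ref{fig:multicross}), obtains, with the help of Lemma~\ref{lemma:decor}, the quadratic recursion $p_{n+1}\le 49\,p_n^2+C_1e^{-c_1\lambda_n}$ on crossing failure probabilities along scales $\lambda_{n+1}=2\lambda_n+\lambda_n^\delta$ and slowly increasing levels $\ell_n\uparrow\ell'$, solves this recursion directly, and then invokes Lemma~\ref{lemma:cross_imply_giant}. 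The constraint $b>1$ appears precisely when choosing $\delta,\gamma$ with $b\delta-a\gamma>1$ so that the error term in the recursion is $O(e^{-c\lambda_n})$. The renormalization machinery of Section~\ref{sec:renorm} is never touched in that proof; it is used in the paper only later, for Theorems~\ref{thm:continuous_l_grand} and~\ref{thm:discrete_correlated}, and there the verification of Condition~\ref{cond:c2} leans on the explicit white-noise representation of the field.

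That last point is where your argument has a real gap. You feed the renormalization scheme with seed events that read $g_{R_0}$ and auxiliary events $\mathcal B^{(n)}_u=\{\sup|g_{R_n}-g_{R_{n-1}}|\le\delta_n\}$ that read two different approximations, and you justify Condition~\ref{cond:c2} by invoking ``the (joint) finite range of the approximations''. But Definition~\ref{def:approximations} gives no such joint property: it says that, for a \emph{fixed} $R$, the field $g_R$ restricted to regions at distance $\ge R$ decorrelates. It says nothing about independence between $g_{R}$ on one region and $g_{R'}$ on a disjoint region for $R\ne R'$, and a fortiori nothing about joint independence of the whole family $(g_{R_0},\dots,g_{R_n})$ across disjoint regions. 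Yet this is exactly what Lemma~\ref{lemma:indep_renorm}, and hence Proposition~\ref{prop:renorm_high_prob}, needs, because the recursively built events $\mathcal A^{(n)}_u$ involve $g_{R_0},g_{R_1},\dots,g_{R_n}$ simultaneously. So in the abstract setting of Theorem~\ref{thm:crossings} (an arbitrary random field $g$ with an $(a,b)$-sequence of finite range approximations), your Condition~\ref{cond:c2} is simply not verified, and Proposition~\ref{prop:renorm_high_prob} cannot be applied. The paper circumvents exactly this obstacle by never comparing two different $g_R$'s: Lemma~\ref{lemma:decor} compares $g$ with a single $g_R$ at a time, replacing exact independence by quasi-independence, and the quadratic bootstrap of Proposition~\ref{prop:crossings} is designed to absorb the resulting additive error $C\lambda_n^2e^{-c\varepsilon^aR^b}$. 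To rescue your route you would need either to add a joint finite range hypothesis to Theorem~\ref{thm:crossings} (which would restrict its scope to, e.g., fields built from a white noise), or to restate Proposition~\ref{prop:renorm_high_prob} with quasi-independence in place of Condition~\ref{cond:c2}; the latter is essentially what the appendix does in disguise. As a smaller remark, your tail estimates on the auxiliary and final Borel--Cantelli steps appear to go through with only $b>0$, which should have been a warning sign that some ingredient forcing $b>1$ was being bypassed; in the paper that ingredient is the requirement $b\delta-a\gamma>1$ in Proposition~\ref{prop:crossings}.
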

Since the ideas of the proof of Theorem \ref{thm:crossings} are pretty classical, we defer the proof of Theorem \ref{thm:crossings} to Appendix \ref{sec:appendixA}.
In order to prove Theorem \ref{thm:continuous_l_grand} we also need the following intermediate result.
\begin{lemma}
\label{lemma:seed_high_prob}
Let $f=q\ast W$ where $q$ satisfies Assumption \ref{a:q} for $\beta>1.$ There exists a sequence $(\ell_\lambda)_{\lambda\in \N^*}$ such that
\begin{equation}
    \Proba{\forall z \in [0,\lambda]^2,\ \alpha_{\lambda}^{f_\lambda}(z)\leq \ell_\lambda} \xrightarrow[\lambda \to \infty]{}1.
\end{equation}
\end{lemma}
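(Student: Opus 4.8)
The statement asks for a (deterministic) sequence $(\ell_\lambda)$ such that with probability tending to $1$, the truncated-and-capped field $\alpha_\lambda^{f_\lambda}$ stays below $\ell_\lambda$ on the box $[0,\lambda]^2$. The natural idea is to take $\ell_\lambda$ to grow slowly, say like a small power of $\log\lambda$ or like $\sqrt{\log\lambda}$, and to control the supremum of $\alpha_\lambda^{f_\lambda}$ by a union bound over a grid of unit boxes covering $[0,\lambda]^2$, using the Gaussian concentration estimates of Section~\ref{sec:2}.

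Let me think through what $\alpha_\lambda^{f_\lambda}$ actually is: by \eqref{eq:def_alpha_R_g}, $\alpha_\lambda^{f_\lambda}(z) = \sup_{r\in[0,\lambda]}\tau_{f_\lambda}(z,z+re_1)$. For $r\in(0,\lambda]$ we have $\tau_{f_\lambda}(z,z+re_1) = \tfrac{f_\lambda(z+re_1)-f_\lambda(z)}{r}$, and for $r=0$ it is $\langle \nabla f_\lambda(z),e_1\rangle$. So on the box $z\in[0,\lambda]^2$, the value $\alpha_\lambda^{f_\lambda}(z)$ only depends on the values of $f_\lambda$ and $\nabla f_\lambda$ on the rectangle $[0,2\lambda]\times[0,\lambda]$ (actually $[0,\lambda]^2$ shifted by $re_1$ with $r\le\lambda$, so on $[0,2\lambda]\times[0,\lambda]$). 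There are two competing bounds, exactly as in the proof of Lemma~\ref{lemma:trunc_1}: for $r\geq 1$ we bound $|\tau_{f_\lambda}(z,z+re_1)| \leq 2\sup_{[0,2\lambda]\times[0,\lambda]}|f_\lambda|$; for $0\leq r\leq 1$ we bound $|\tau_{f_\lambda}(z,z+re_1)| \leq \sup_{[0,\lambda]^2+[0,1]^2}|\nabla f_\lambda|$ (using $|\tau(z,z+re_1)|\le \|\nabla f_\lambda\|$ on the relevant segment). Hence $\sup_{z\in[0,\lambda]^2}\alpha_\lambda^{f_\lambda}(z) \leq 2\sup_{[0,2\lambda]\times[0,\lambda]}|f_\lambda| + \sup_{[0,2\lambda]\times[0,\lambda]}\|\nabla f_\lambda\|$.

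So the plan reduces to: control $\sup$ over a box of side $\sim\lambda$ of $|f_\lambda|$ and $\|\nabla f_\lambda\|$. By stationarity and Proposition~\ref{prop:BORELL_TIS2}, for each fixed unit box $B$ in a grid covering $[0,2\lambda]\times[0,\lambda]$, $\Proba{\sup_B|f_\lambda| > u} \leq \tfrac1c e^{-cu^2}$ (with $c$ uniform in the truncation parameter $\lambda\in[1,\infty]$), and similarly for the gradient. There are $O(\lambda^2)$ such unit boxes. A union bound gives $\Proba{\sup_{[0,2\lambda]\times[0,\lambda]}|f_\lambda| > u} \leq C\lambda^2 e^{-cu^2}$. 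Choosing $u = u_\lambda := A\sqrt{\log\lambda}$ with $A$ large enough (say $A > \sqrt{3/c}$) makes this $\to 0$. Then setting $\ell_\lambda := 3u_\lambda = 3A\sqrt{\log\lambda}$ (to absorb both the factor $2$ on $|f_\lambda|$ and the gradient term) gives $\Proba{\exists z\in[0,\lambda]^2:\ \alpha_\lambda^{f_\lambda}(z) > \ell_\lambda}\to 0$, which is the claim.

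The main (and only mildly delicate) point is to make sure all constants in the Gaussian tail bounds are uniform in the truncation level $\lambda$ — but this is exactly the content of Proposition~\ref{prop:BORELL_TIS2}, where the convention $f_\infty=f$ and the stated uniformity over $R\in[1,\infty]$ give us precisely what we need. Everything else is a routine union bound. I will write the argument in roughly the following steps: (i) reduce $\sup_{[0,\lambda]^2}\alpha_\lambda^{f_\lambda}$ to the sup of $|f_\lambda|$ and $\|\nabla f_\lambda\|$ over $[0,2\lambda]\times[0,\lambda]$ via the two-regime bound above; (ii) cover that rectangle by $O(\lambda^2)$ unit boxes and apply Proposition~\ref{prop:BORELL_TIS2} plus a union bound; (iii) pick $\ell_\lambda = C\sqrt{\log\lambda}$ with $C$ large and conclude.
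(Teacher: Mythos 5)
Your proof is correct, and it takes a genuinely different (and in some ways more informative) route than the paper's. The paper's proof transfers the problem to $\alpha^f$: it first invokes Lemma~\ref{lemma:technical} (the almost-sure boundedness of $r(z)$ on $[0,1]^2$, and hence of $\alpha^f$ there) to find $\ell_\lambda$ with $\Proba{\exists z\in[0,1]^2:\ \alpha^f(z)>\ell_\lambda-1}\le\lambda^{-3}$, applies stationarity and a union bound over $\lambda^2$ unit boxes, and then uses Corollary~\ref{cor:trunc_3} to pass from $\alpha^f$ to $\alpha^{f_\lambda}_\lambda$ at a cost of an extra $1$. You instead bound $\alpha^{f_\lambda}_\lambda$ directly from its definition via the two-regime split (large $r$: numerator bounded by $2\sup|f_\lambda|$ and $r\ge1$; small $r$: mean value theorem giving $\sup\|\nabla f_\lambda\|$), and then apply Proposition~\ref{prop:BORELL_TIS2} plus a union bound over $O(\lambda^2)$ unit cells. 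Both arguments are sound, but yours is self-contained at the level of Proposition~\ref{prop:BORELL_TIS2} (it does not need Lemma~\ref{lemma:technical} or Corollary~\ref{cor:trunc_3}) and, crucially, it quantifies $\ell_\lambda=O(\sqrt{\log\lambda})$, whereas the paper's proof only gives existence of some sequence $\ell_\lambda\to\infty$ — a limitation the paper itself acknowledges at the start of Section~\ref{sec:5}. One small detail worth stating explicitly when you write it up: for the small-$r$ regime the segment $[z,z+re_1]$ with $r\le1$ and $z\in[0,\lambda]^2$ is contained in $[0,\lambda+1]\times[0,\lambda]$, so the gradient supremum should be taken over that rectangle (or, for simplicity, over $[0,2\lambda]\times[0,\lambda]$ as you use for the field itself); and that the uniformity in the truncation level is exactly the content of Proposition~\ref{prop:BORELL_TIS2} with $R\in[1,\infty]$, which you correctly identify as the key point.
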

\begin{proof}
Let $\lambda \in \N^*$. By Lemma \ref{lemma:technical} we have
\begin{equation}
\label{eq:intermediate1789}
    \Proba{\forall z\in [0,1]^2,\ \alpha^f(z) \leq \ell}\xrightarrow[\ell \to \infty]{}1.
\end{equation}
Indeed, the fifth item of Lemma \ref{lemma:technical} show that almost surely there exists a finite $C>0$ such that
$$\forall z \in [0,1]^2,\ \alpha^f(z)= \sup_{r\in [0,C]}\tau_f(z,z+re_1).$$
By continuity and compactness this implies \eqref{eq:intermediate1789}. By \eqref{eq:intermediate1789} we may find $\ell_\lambda\in \R$ big enough so that
\begin{equation}
    \label{eq:intermdiate1790}
    \Proba{\exists z\in [0,1]^2,\ \alpha^f(z) > \ell_\lambda-1}\leq \frac{1}{\lambda^3}.
\end{equation}
Therefore, by stationarity and an union bound, we find using \eqref{eq:intermdiate1790} that
\begin{equation}
    \label{eq:intermediate1791}
    \Proba{\forall z\in [0,\lambda]^2,\ \alpha^f(z)\leq \ell_\lambda-1}\geq 1-\frac{1}{\lambda}\xrightarrow[\lambda\to \infty]{}1.
\end{equation}
Moreover, by Corollary \ref{cor:trunc_3} together with an union bound we find that
\begin{equation}
    \label{eq:intermediate1792}
    \Proba{\forall z\in [0,\lambda]^2,\ \left|\alpha_\lambda^{f_\lambda}(z) - \alpha^f(z)\right|<1}\geq 1 - C\lambda^2 e^{-c\lambda^b},
\end{equation}
where $0<b<\min\left(\frac{2\beta-2}{3},2\right)$ and where $c,C>0$ are two positive constants depending on $q$ and $b$.
Finally, \eqref{eq:intermediate1792} together with \eqref{eq:intermediate1791} conclude the proof.
\end{proof}
We now provide the proof of Theorem \ref{thm:continuous_l_grand}.
\begin{proof}[Proof of Theorem \ref{thm:continuous_l_grand}.]
We will apply our renormalization argument in dimension $d=2$. Denote by $b_0$ the following quantity
\begin{equation}
    \label{eq:def_exponent_b}
    b_0:= \min\left(2,\frac{2\beta-2}{3}\right).
\end{equation}
Since $\beta>\frac{5}{2}$ we have $b_0>1$. In the following we fix $b\in ]1,b_0[$ (for instance $b=\frac{1+b_0}{2}$).
We introduce two sequences $\mu=(\mu_n)_{n\in \N}$ and $\sigma=(\sigma_n)_{n\in \N}$ which are actually stationary sequences.
$$\forall n\in \N,\ \mu_n := 10^6 \text{ and } \sigma_n := 100.$$
It follows that $\mu$ and $\sigma$ satisfy Conditions \ref{cond:c1} and \ref{cond:c3}. Let $\varepsilon_0>0$ given by Proposition \ref{prop:renorm_high_prob}. Let $(\ell_\lambda)_{\lambda\in \N^*}$ be the sequence given by Lemma \ref{lemma:seed_high_prob}.
Let $\lambda_0\in \N^*$ (this parameter will be fixed later). Depending on $\lambda_0$ we have a sequence $(\lambda_n)_{n\in \N}$ which we recall is given by
$$\forall n\in \N,\ \lambda_{n+1}:=\mu_n\lambda_n.$$
We also introduce $\mathcal{R}_{\lambda_0}$ a renormalization scheme associated to $(\lambda_0,\mu,\sigma).$ In order to define this renormalization scheme we define its collection of seed events and auxiliary events. For the seed events, for $u\in \mathbb{L}_0=\lambda_0\mathbb{Z}^2$ we set
\begin{equation}
    \label{eq:def_seed_event_1}
    \mathcal{A}^{(0)}_u := \left\{\forall z\in u+[0,\lambda_0]^2,\ \alpha^{f_{\lambda_0}}_{\lambda_0}(z)\leq \ell_{\lambda_0}\right\}.
\end{equation}
For the auxiliary events, given $n\in \N^*$ and $u\in \mathbb{L}_n = \lambda_n\Z^2$ we set
\begin{equation}
    \mathcal{B}_u^{(n)} := \left\{\norm{\alpha_{\lambda_n}^{f_{\lambda_n}} - \alpha_{\lambda_{n-1}}^{f_{\lambda_{n-1}}}}{u+[0,\lambda_n]^2} \leq \frac{1}{2^n} \right\}.
\end{equation}
We see from the definition of the auxiliary and seed events and the definition of $\alpha^{f_\lambda}_\lambda$ that Condition \ref{cond:c2} is verified (for any choice for $\lambda_0$). Indeed, the seed event $\mathcal{A}_u^{(0)}$ is measurable with respect to the restriction of the white noise $W$ to the box $u+[-4\lambda_0,4\lambda_0]^2$ which is included in the formal support of $\mathcal{A}_u^{(0)}$ since $\sigma_0= 100$ (see Definition \ref{def:support}). Similarly, the auxiliary event $\mathcal{B}_u^{(n)}$ is measurable with respect to the restriction of the white noise $W$ in the box $u+[-4\lambda_n,4\lambda_n]^2$ which again is included in the formal support of $\mathcal{B}_u^{(n)}$ by our choice of $\sigma.$

We now show that when $\lambda_0$ is big enough, then Condition \ref{cond:c4} is verified for $\varepsilon_0.$
 In fact, applying Corollary \ref{cor:trunc_3} together with a triangular inequality we get for any $n\in \N^*$ and for any $u\in \mathbb{L}_n$,
\begin{equation}
    \Proba{\mathcal{B}_u^{(n)}} \geq 1-C\lambda_n^2e^{-c(\lambda_{n-1})^{b}\frac{1}{4^n}}.
\end{equation}
Since $\lambda_n = \mu_{n-1} \lambda_{n-1}=10^6\lambda_{n-1}$ and since $(\lambda_{n-1})^{b}\geq (\lambda_0)^b(10^6)^{n-1}$ (which uses $b>1$) we see that adjusting constants we may find $\lambda_0$ big enough such that for all $n\in \N^*$ and $u\in \mathbb{L}_n$,
\begin{equation}
    \label{eq:intermdiate4798}
    \Proba{\mathcal{B}_u^{(n)}}\geq 1-2^{-c\lambda_02^n}.
\end{equation}
Lemma \ref{lemma:seed_high_prob} together with \eqref{eq:intermdiate4798} show that when $\lambda_0$ is big enough then $\mathcal{R}_{\lambda_0}$ satisfies Condition \ref{cond:c4} for $\varepsilon_0$. We now fix $\lambda_0\in \N^*$ big enough such that Condition \ref{cond:c4} holds, and we work with the renormalization scheme $\mathcal{R}_{\lambda_0}$. In particular, 
Proposition \ref{prop:renorm_high_prob} yields
\begin{equation}
    \label{eq:high_prob_2}
    \forall n\in \N,\ \forall u\in \mathbb{L}_n,\  \Proba{\mathcal{A}^{(n)}_u}\geq 1-2^{-2^n}.
\end{equation}
Let $\lambda>\lambda_0$ and $N=N(\lambda)\in \N$ be the largest integer $N\in \N$ such that $\lambda_N <\lambda.$ Denote by $k_0=k_0(\lambda)$ the integer $k_0 := \lceil 2\lambda/\lambda_N\rceil.$
For $-1\leq k\leq k_0$ we denote by $u_k\in \mathbb{L}_N$ the vertex
\begin{equation}
    u_k := (k\lambda_N, 0)\in \mathbb{L}_N.
\end{equation}
Note that since $10^6 \lambda_N\geq \lambda$ we have $k_0\leq 2\times 10^6+2$ (which is an absolute constant).
Denote by $\mathcal{G}_\lambda$ the following event,
\begin{equation}
    \mathcal{G}_\lambda:= \bigcap_{k=-1}^{k_0}\mathcal{A}^{(N)}_{u_k}.
\end{equation}
Due to \eqref{eq:high_prob_2} and since $N\geq \frac{\log(\lambda/\lambda_0)}{\log(10^6)}-1$ we find by an union bound that
\begin{equation}
    \label{eq:high_prob_G_lambda}
    \Proba{\mathcal{G}_\lambda}\xrightarrow[\lambda \to \infty]{}1.
\end{equation}
On the event $\mathcal{G}_\lambda$, the vertices $u_{-1},\dots, u_{k_0}$ form in $\mathbb{L}_{N}$ a nearest-neighbor path of points good up to scale $N$. Therefore applying repeatedly Lemma \ref{lemma:der_renorm_extract_path}, we may find an integer $l\in \N^*$ and vertices $v_1,\dots,v_l$ of $\mathbb{L}_0$ that form a nearest-neighbor path of points in $\mathbb{L}_0$ such that $v_1 \in [-\infty,0]\times [0,\lambda_N]$, $v_l\in [2\lambda, \infty[\times [0,\lambda_N]$ and such that all points $v_i$ are good up to scale $N$.
In particular, for any $1\leq i \leq l$, in the box $B_i := v_i+[0,\lambda_0]^2$ the following holds:
\begin{itemize}
    \item $\forall z\in B_i,\ \alpha^{f_{\lambda_0}}_{\lambda_0}(z)\leq \ell_{\lambda_0}$
    \item $\forall z\in B_i,\ \forall n\in \llbracket 1, N\rrbracket,\ |\alpha^{f_{\lambda_n}}_{\lambda_n}(z) - \alpha^{f_{\lambda_{n-1}}}_{\lambda_{n-1}}(z)|\leq \frac{1}{2^n}.$
\end{itemize}
In fact, the first item is due to the fact that the event $\mathcal{A}^{(0)}_{v_i}$ must occur, and the second item comes from the fact that $v_i$ sits in a tower of scales where all the events $\mathcal{B}^{(n)}_u$ occur (for $n\leq 1\leq N$ and $u\in \mathbb{L}_n$ such that $u+[0,\lambda_n[^2$ contains $v_i$).
In particular, for all $1\leq i \leq l$ and for all $z\in B_i$ then,
\begin{equation}
    \alpha^{f_{\lambda_N}}_{\lambda_N}\leq \ell_{\lambda_0}+1.
\end{equation}
Denote by $\mathcal{G}'_\lambda$ the event,
\begin{equation}
    \mathcal{G}'_\lambda := \left\{\norm{\alpha^f-\alpha^{f_{\lambda_N}}_{\lambda_N}}{[0,2\lambda]\times[0,\lambda]}<1\right\}.
\end{equation}
Then, on the event $\mathcal{G}_\lambda\cap \mathcal{G}'_\lambda$, for all $1\leq i \leq l$ and for all $z\in B_i$ we have
\begin{equation}
    \alpha^f(z)\leq \ell_{\lambda_0}+2.
\end{equation}
Note that the union of all boxes $B_i$ is a connected set that is included in $\mathbb{R}\times [0,\lambda]$ and that intersects $\{0\}\times [0,\lambda]$ and $\{2\lambda\}\times [0,\lambda].$ Hence, on the event $\mathcal{G}_\lambda \cap \mathcal{G}'_\lambda$ there exists a horizontal crossing of the rectangle $[0,2\lambda]\times [0,\lambda]$ by the set $\{\alpha^f\leq \ell_{\lambda_0}+2\}.$
It remains to see that the event $\mathcal{G}_\lambda \cap \mathcal{G}'_\lambda$ has high probability. We already know by  \eqref{eq:high_prob_G_lambda} that $\mathcal{G}_\lambda$ has high probability. Moreover, by Corollary \ref{cor:trunc_3} (and the fact that $\lambda_N \geq \lambda/10^6$) we have
\begin{equation}
    \label{eq:high_prob_G_lambda_2}
    \Proba{\mathcal{G}'_\lambda}\geq 1-C\lambda_N^2e^{-c\lambda_N^b}.
\end{equation}
This also shows that $\mathcal{G}'_\lambda$ has high probability. By an union bound, the intersection $\mathcal{G}_\lambda\cap \mathcal{G}'_\lambda$ also has high probability. Therefore, we have proven that the set $\{\alpha^f\leq \ell_{\lambda_0}+2\}$ crosses horizontal rectangles of aspect ratio two by one with high probability. The exact same argument also applies to see that $\{\alpha^f \leq \ell_{\lambda_0}+2\}$ crosses vertical rectangles with high probability. We can now apply Theorem \ref{thm:crossings} (the field $\alpha^f$ admits a $(a,b)$-sequence of finite range approximations with $a>0$ and $b>1$ by Corollary \ref{cor:trunc_3}). This yields that for any $\ell>\ell_{R_0}+2$, almost surely, the set $\{\alpha^f\leq \ell\}$ contains a unique unbounded component and the set $\{\alpha^f \geq \ell\}$ does not contain any unbounded component. This concludes the proof of Theorem \ref{thm:continuous_l_grand}.
\end{proof}

\section{Proof of Theorem \ref{thm:discrete_correlated}}
\label{sec:5}
This section is dedicated to the proof of Theorem \ref{thm:discrete_correlated}. Recall the definitions of $X$ and $\alpha_X$ in \eqref{eq:defX} and \eqref{def:alpha_intro_discrete}.
Similarly to what was done in the continuous setting for $R,R_0\geq 1$ we introduce the following fields.
\begin{align}
    &\alpha^X_R(u) := \sup_{r\in \llbracket 1,R\rrbracket}\frac{f(u+re_1)-f(u)}{r} \\
   & \alpha^{X_{R_0}}(u) := \sup_{r\in \mathbb{N}^*}\frac{f_{R_0}(u+re_1)-f_{R_0}(u)}{r}\\
   & \alpha^{X_{R_0}}_R(u) := \sup_{r\in \llbracket 1,R\rrbracket}\frac{f_{R_0}(u+re_1)-f_{R_0}(u)}{r}.
\end{align}
and we observe that Lemmas \ref{lemma:trunc_1}, \ref{lemma:trunc_2} and Corollary \ref{cor:trunc_3} also hold for these approximations of $\alpha^X$ (by using the same arguments). One would think that we could directly apply the same proof as for Theorem \ref{thm:continuous_l_grand} to obtain Theorem \ref{thm:discrete_correlated}. It appears that the same proof is indeed applicable for the regime of large $\ell.$ However, for the regime of small $\ell$ then due to a sprinkling and to the fact that one does not quantify the sequence $(\ell_\lambda)$ in Lemma \ref{lemma:seed_high_prob}, then we could end-up proving that $\{\alpha^X\geq \ell\}$ admits a unique infinite cluster for $\ell<0$ which is completely trivial and not what we would like to prove. We need to be more careful about our approach. A key result of this section if Proposition \ref{prop:peierls_correl}.
\begin{proposition}
\label{prop:peierls_correl}
Let $f=q\ast W$ where $q$ satisfies Assumption \ref{a:q} for some $\beta>1$ and Assumption \ref{a:q2} for $(1,\delta)$, where $\delta>0$.
Let $\rho\in ]0,1[$. There exists $\delta_0(\rho)>0$ (depending only on $\rho$) such that, if $\delta<\delta_0(\rho)$, there exists a constant $R_0\in \N^*$ such that for any $A\subset \Z^2$ finite subset and any $R\geq R_0$ then,
\begin{equation}
    \Proba{\forall u \in A,\ \alpha^X_R(u)\leq 0} \leq \rho^{|A|}.
\end{equation}
\end{proposition}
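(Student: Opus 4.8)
The plan is to bound the probability of the event $\mathcal{D}_A := \{\forall u\in A,\ \alpha^X_R(u)\leq 0\}$ by a geometric bound in $|A|$. The key point is that $\alpha^X_R(u)\leq 0$ forces, for each $u\in A$, that $X_{u+re_1}\leq X_u$ for every $r\in\llbracket 1,R\rrbracket$; in particular, along any horizontal line $\{y\}\times\Z$, the values of $X$ at the points of $A$ lying on that line, together with the next $R$ sites to their right, must be ordered in a very restrictive way. I would first reduce to controlling, on a single horizontal line, the probability that a prescribed finite family of Gaussian variables is non-increasing along a long horizontal stretch: if $A$ meets the line $\{y\}\times\Z$ in the abscissas $x_1<x_2<\dots<x_m$, then on $\mathcal{D}_A$ we have $X_{(x_i+1,y)}\le X_{(x_i,y)}$ and more generally monotonicity on each block $\llbracket x_i, x_i+R\rrbracket$. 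Choosing $R$ large enough that these blocks ``chain together'' (or simply using that each $u\in A$ individually imposes $R$ inequalities), one sees that $\mathcal{D}_A$ is contained in an event of the form ``a certain set of $\gtrsim R|A|$ Gaussian increments are all $\le 0$''.

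The heart of the argument is then a quantitative estimate: for a stationary Gaussian field $X$ on $\Z^2$ with unit variance and $\sum_{\|u\|_\infty\ge 1}|\cov(X_u,X_0)|<\delta$ with $\delta$ small, the probability that $X$ is non-increasing along a horizontal segment of length $R$ decays like $c^{R}$ for a constant $c=c(\delta)<1$, and moreover this persists when we ask for such monotone behaviour simultaneously near each point of $A$, yielding a bound $\rho^{|A|}$ once $R\ge R_0(\rho)$. To prove this I would use a comparison with the independent case: by Assumption \ref{a:q2}, writing $X = \sqrt{1-\delta'}\,Y + \sqrt{\delta'}\,Z$-type decompositions is not literally available, but one can still control things via Slepian / Gaussian correlation inequalities, or more robustly, condition on the field outside a band and use that the conditional law of $X$ on the relevant sites is a Gaussian perturbation (of operator norm $O(\delta)$) of an i.i.d.\ standard Gaussian vector. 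For an i.i.d.\ standard Gaussian vector of length $R+1$, the probability of being non-increasing is exactly $1/(R+1)!\le e^{-R}$; a perturbation argument (e.g.\ bounding the Radon--Nikodym derivative between the correlated Gaussian law on a block and the i.i.d.\ one in terms of $\delta$, then using Cauchy--Schwarz to transfer the probability bound, and finally disintegrating over disjoint-enough blocks to make the errors multiply rather than add) gives the desired $\rho^{|A|}$ for $\delta<\delta_0(\rho)$ and $R\ge R_0$.

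The main obstacle, and the step requiring the most care, is exactly this transfer from the i.i.d.\ estimate to the correlated one while keeping the bound \emph{multiplicative} in $|A|$: naively the Radon--Nikodym / correlation-inequality errors accumulate additively over the many points of $A$, which destroys the geometric bound. The way around this is to organise the sites involved into groups that are pairwise at distance $\ge R_0$ in the $e_1$ direction (using that $\alpha^X_R(u)\le 0$ for \emph{every} $u\in A$ lets us keep, for each $u$, its own block of $R$ rightward increments, and to extract a sub-collection of $\gtrsim |A|$ such blocks that are sufficiently separated), so that Assumption \ref{a:q2} makes the joint law across distinct blocks within total-variation distance $O(\delta)$ of a product law of per-block laws. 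One then only pays a factor $(1+O(\delta))$ per block, and the per-block probability $e^{-R}$ absorbs it once $R\ge R_0$; taking $\delta_0(\rho)$ small and $R_0$ large (depending on $\rho$) finishes the proof. Handling the geometry of extracting the separated blocks from an arbitrary finite set $A$ — possibly splitting $A$ according to residues of the abscissa modulo a multiple of $R_0$ and treating the densest residue class — is the routine-but-delicate bookkeeping I would need to carry out in detail.
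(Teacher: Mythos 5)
Your high-level plan has the right shape: reduce to single rows, use that $\alpha^X_R(u)\le 0$ forces $X_u$ to dominate a window of size $R$, estimate this via an i.i.d.\ permutation probability, and try to keep the bound multiplicative in $|A|$. But there is a genuine gap, and your proposed technical route is not the one the paper takes. The paper's proof hinges on Lemma~\ref{lemma:order}: for any finite $A\subset\Z^2$ and any partition $A=\sqcup_i A_i$, the probability that $X_{|A_i}$ follows a prescribed order for every $i$ simultaneously is between $\prod_i C(\delta)^{-n_i}/n_i!$ and $\prod_i C(\delta)^{n_i}/n_i!$, with $C(\delta)=\sqrt{(1+\delta)/(1-\delta)}$. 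The proof is a permutation-symmetry argument (change the ordering by an orthogonal permutation matrix, compare the two quadratic forms by the condition number $\lambda_{\max}/\lambda_{\min}$ of $\Sigma_A$, then control that ratio by Gershgorin using Assumption~\ref{a:q2} for $(1,\delta)$, and finally sum over all permutations). Crucially, this gives the \emph{product} bound with no separation requirement whatsoever between the $A_i$'s — the decoupling comes from exchangeability, not from quasi-independence. Your proposals — Slepian (which compares monotone events, not ordering events), a Radon--Nikodym bound (unbounded density ratio), or a ``total-variation $O(\delta)$'' comparison to a product law (additive, hence does not multiply over blocks) — do not cleanly deliver this multiplicative estimate, and the Cauchy--Schwarz version would lose a square root and introduce a $e^{O(n\delta^2)}$ prefactor that you would then need to track carefully against $R_0$ and $\rho$.

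The more concrete gap is the bookkeeping you defer. You plan to ``extract a sub-collection of $\gtrsim|A|$ blocks that are sufficiently separated''. Consider $A$ a single horizontal segment of $n=|A|$ consecutive sites. Pairwise $e_1$-separation $\ge R_0$ leaves only $\sim n/R_0$ windows; a per-window probability of order $1/R_0$ (the probability that one coordinate is maximal among $R_0$) then yields $R_0^{-n/R_0}$, which is \emph{not} $\le\rho^n$ once $\rho< e^{-1/e}$, no matter how you choose $R_0$. The true decay in this regime is much faster, of order $1/n!$, precisely because the $R$-connected block is \emph{not} decomposed into separated windows: the windows overlap and force a global near-ordering. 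The paper handles this by a dichotomy on the decomposition of $A$ into maximal $R$-connected blocks $B^i_j$: if at least half the mass sits in blocks of size $\ge n_0(\rho)$, apply Lemma~\ref{lemma:order} block-by-block and use $C(\delta)^m/m!\le\rho^{2m}$ for $m\ge n_0$; otherwise there are at least $|A|/(2n_0)$ blocks of size $<n_0$, pick one representative per block (these are automatically $>R$-separated along their row), attach a disjoint right-window $\mathcal{R}(u,v)$ of length $R_0$ to each, and apply Lemma~\ref{lemma:order} to that collection to get $(C(\delta)^{R_0}/R_0)^{|A'|}\le\rho^{2n_0|A'|}\le\rho^{|A|}$ after tuning $R_0$ then $\delta_0$. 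Without this dichotomy, your extraction argument does not close the dense case.
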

We first show how Proposition \ref{prop:peierls_correl} allows us to conclude. We use Proposition \ref{prop:peierls_correl} together with a Peierls argument to prove the following technical result.
\begin{lemma}
\label{lemma:seed_high_prob2}
Let $f=q\ast W$ where $q$ satisfies Assumption \ref{a:q} for some $\beta>16$ and \ref{a:q2} for $(1,\delta).$
Given $\lambda \in \N^*$ we denote by $\mathcal{A}_\lambda$ the event that there exists a nearest-neighbor loop $\gamma$ in $\Z^2$ that separates $\partial [0,\lambda]^2$ from $\partial [-\lambda,2\lambda]^2$ and that only uses vertices $v\in \Z^2$ such that $\alpha^{X_\lambda}_\lambda(v) > \frac{1}{\lambda^5}.$ There exists an absolute constant $\delta_0>0$ such that if $\delta\in ]0,\delta_0[$ then,
\begin{equation}
    \label{eq:Au0goesto11}
    \Proba{\mathcal{A}_\lambda}\xrightarrow[\lambda\to \infty]{}1.
\end{equation}
\end{lemma}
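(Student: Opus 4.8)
The plan is to combine a Peierls argument with planar duality, after first reducing from the finite-range field $\alpha^{X_\lambda}_\lambda$ to the genuine discrete field $\alpha^X_\lambda$, at which point Proposition \ref{prop:peierls_correl} applies. Call a vertex $v\in\Z^2$ \emph{bad} if $\alpha^{X_\lambda}_\lambda(v)\le\lambda^{-5}$. By the standard duality between $4$-connected open circuits and $\ast$-connected (i.e.\ $\ell^\infty$-nearest-neighbour) closed paths for site percolation on $\Z^2$, if $\mathcal{A}_\lambda$ fails then there is a self-avoiding $\ast$-path $\gamma\subset\Z^2\cap[-\lambda,2\lambda]^2$ joining $\partial[0,\lambda]^2$ to $\partial[-\lambda,2\lambda]^2$ all of whose vertices are bad. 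Such a path has between $\lambda$ and $(3\lambda+1)^2$ vertices and its endpoint on $\partial[0,\lambda]^2$ has $O(\lambda)$ possible positions; since a rooted self-avoiding $\ast$-path of length $n$ has at most $8^n$ realisations, a union bound gives
\[
\Proba{\mathcal{A}_\lambda^c}\ \le\ \Proba{\mathcal{E}_\lambda^c}\ +\ C\lambda\sum_{n\ge\lambda}8^{\,n}\,\sup_{|\gamma|=n}\Proba{\forall v\in\gamma,\ \alpha^X_\lambda(v)\le 2\lambda^{-5}},
\]
where $\mathcal{E}_\lambda$ is the truncation event below. It is essential to pull $\Proba{\mathcal{E}_\lambda^c}$ \emph{outside} the sum: $\sum_{n\le(3\lambda+1)^2}8^n$ grows faster than any polynomial in $\lambda$, whereas the summand, once estimated, is geometrically small.

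Set $\mathcal{E}_\lambda:=\{\forall u\in\Z^2\cap[-\lambda,2\lambda]^2,\ |\alpha^X_\lambda(u)-\alpha^{X_\lambda}_\lambda(u)|\le\lambda^{-5}\}$. By Lemma \ref{lemma:trunc_1} (which holds verbatim for the discrete approximations), applied with $R_0=R=\lambda$ and $\varepsilon=\lambda^{-5}$, together with a union bound over the $O(\lambda^2)$ lattice points of the box, $\Proba{\mathcal{E}_\lambda^c}\le C\lambda^3 e^{-c\lambda^{b-10}}$ with $b:=\tfrac{2\beta-2}{3}$; the hypothesis $\beta>16$ is precisely what makes $b>10$, hence $\Proba{\mathcal{E}_\lambda^c}\to 0$. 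On $\mathcal{E}_\lambda$, every bad vertex lying in $[-\lambda,2\lambda]^2$ satisfies $\alpha^X_\lambda(v)\le 2\lambda^{-5}$, which is what legitimises the displayed inequality.

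To finish, fix $\rho:=\tfrac1{16}$, let $\delta_0$ be the constant $\delta_0(\tfrac1{16})$ and $R_0$ the integer furnished by Proposition \ref{prop:peierls_correl}, and assume $\delta<\delta_0$ (this $\delta_0$ is the absolute constant in the statement). For $\lambda\ge R_0$, Proposition \ref{prop:peierls_correl} applied with $R=\lambda$ and $A=\gamma$ — in the mild variant where the threshold $0$ is replaced by $2\lambda^{-5}$, which the same proof yields since over the range $r\le\lambda$ this perturbs the defining constraint $\alpha^X_\lambda(v)\le 0$ only by the amount $2\lambda^{-5}\cdot\lambda=2\lambda^{-4}$ — gives $\Proba{\forall v\in\gamma,\ \alpha^X_\lambda(v)\le 2\lambda^{-5}}\le 16^{-n}$ for all $\lambda$ large enough, uniformly over $\ast$-paths $\gamma$ with $|\gamma|=n$. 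Plugging this in, the series becomes $C\lambda\sum_{n\ge\lambda}(8/16)^n=C\lambda\,2^{-\lambda+1}\to 0$; combined with $\Proba{\mathcal{E}_\lambda^c}\to 0$ this gives $\Proba{\mathcal{A}_\lambda^c}\to 0$, i.e.\ \eqref{eq:Au0goesto11}.

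The main obstacle is the last step: one must run Proposition \ref{prop:peierls_correl} with the vanishing \emph{positive} threshold $2\lambda^{-5}$ rather than with $0$, and — crucially — the resulting bound must be uniform in $|A|$, since a single $\lambda$ has to control all path lengths $n$ simultaneously (a pointwise-in-$A$ continuity argument, letting the threshold $\downarrow 0$ for each fixed $A$, would not do). Concretely one has to verify that the Gaussian-ordering estimate underlying Proposition \ref{prop:peierls_correl} is stable under an $O(\lambda^{-4})$ shift of the means of the Gaussian vectors involved, with constants independent of the number of sites; this is where the explicit form of $\alpha^X$ as a supremum of increments, emphasised in the introduction, is exploited. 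The duality input and the Peierls counting are routine, and the only other quantitative constraint is $\beta>16$, needed exactly so that the global comparison of $\alpha^{X_\lambda}_\lambda$ and $\alpha^X_\lambda$ on the box holds with probability tending to $1$.
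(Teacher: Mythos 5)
Your proposal has the right ingredients (duality/Peierls, the truncation comparison via Lemma \ref{lemma:trunc_1} needing $\beta>16$, and the observation that the order of operations matters for the union bound), but it has a genuine gap at the place you yourself flag as ``the main obstacle.'' You invoke Proposition \ref{prop:peierls_correl} with the threshold $0$ replaced by $2\lambda^{-5}$ and claim this is a ``mild variant'' that ``the same proof yields.'' That is not something that follows by inspection: the proof of Proposition \ref{prop:peierls_correl} rests on Lemma \ref{lemma:order}, which is an estimate on the probability of \emph{exact} linear orderings of a Gaussian vector, exploiting the fact that the ordering cells $F_A^\sigma$ partition $\R^A$ (so that summing over $\sigma'$ gives mass $1$). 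After replacing $\{\alpha^X_\lambda(v)\le 0\}$ by $\{\alpha^X_\lambda(v)\le 2\lambda^{-5}\}$, the constraint ``$X_{b_j}\le X_{b_i}+2\lambda^{-4}$ for $b_j>b_i$'' no longer forces the exact descending order on $R$-connected blocks, so the cells overlap and the partition argument breaks. Reinterpreting the shift as a tilt $Y_u=X_u-2\lambda^{-5}u_1$ makes $Y$ a \emph{non-centered} Gaussian vector, and Lemma \ref{lemma:order} is stated and proved only for centered ones. One would have to re-derive a version of Lemma \ref{lemma:order} with uniform control in the shift and in $|A|$ simultaneously, which is exactly the work you defer; without it the Peierls sum cannot be closed.

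The paper sidesteps this entirely by inserting one extra event. Besides the loop event at threshold $0$ (call it $\mathcal{G}^1_\lambda$, to which Proposition \ref{prop:peierls_correl} applies \emph{verbatim}) and the truncation-comparison event $\mathcal{G}^3_\lambda$ (your $\mathcal{E}_\lambda$), it adds the anti-concentration event $\mathcal{G}^2_\lambda$ that no $v\in[-\lambda,2\lambda]^2\cap\Z^2$ has $\alpha^X_\lambda(v)\in[0,2\lambda^{-5}]$. This has probability $1-O(\lambda^{-1})$ by a single union bound and the boundedness of the Gaussian density of each $X_{u+re_1}-X_u$, $r\le\lambda$. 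On $\mathcal{G}^1_\lambda\cap\mathcal{G}^2_\lambda\cap\mathcal{G}^3_\lambda$ the loop vertices automatically satisfy $\alpha^X_\lambda(v)>2\lambda^{-5}$ and hence $\alpha^{X_\lambda}_\lambda(v)>\lambda^{-5}$, which is $\mathcal{A}_\lambda$. This decomposition applies anti-concentration \emph{once over the box} rather than trying to fold it into the per-path Peierls estimate, and so requires no modification of Proposition \ref{prop:peierls_correl}. Your write-up should be reorganized along these lines; as it stands, the crucial uniform-in-$|A|$ estimate for the tilted ordering probabilities is asserted rather than proved.
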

\begin{proof}
We write $\mathcal{G}_\lambda^1\cap \mathcal{G}_\lambda^2\cap \mathcal{G}_\lambda^3\subset \mathcal{A}_\lambda$ where
\begin{itemize}
    \item $\mathcal{G}_\lambda^1$ is the event that there exists a nearest-neighbor loop in $\Z^2$ separating $\partial [0,\lambda]^2$ from $\partial[-\lambda,2\lambda]^2$ that only uses vertices $v\in \Z^2$ such that $\alpha^{X}_{\lambda}(v) > 0$
    \item $\mathcal{G}_\lambda$ is the event that there does not exist a vertex $v\in [-\lambda,2\lambda]^2\cap \mathbb{Z}^2$ such that $\alpha^X_{\lambda}(v)\in [0,\frac{2}{(\lambda_0)^5}].$
    \item $\mathcal{G}_3$ is the event that for all $v\in [-\lambda,2\lambda]^2\cap \Z^2$ we have $$|\alpha^{X}_{\lambda}(v)- \alpha^{X_{\lambda}}_{\lambda}(v)|<\frac{1}{\lambda^5}.$$
\end{itemize}
In order to prove \eqref{eq:Au0goesto11} it is enough to prove that the three events $\mathcal{G}_\lambda^1$, $\mathcal{G}_\lambda^2$ and $\mathcal{G}_\lambda^3$ have high probability when $\lambda$ goes to infinity. For $\mathcal{G}_\lambda^1$, take $\rho=\frac{1}{10}$ and $\delta_0>0$ given by Proposition \ref{prop:peierls_correl}. Then by a classical Peierls argument we see that $\mathcal{G}_\lambda^1$ has high probability when $\lambda$ goes to infinity. For $\mathcal{G}_\lambda^2$, by stationarity and a union bound we find that
\begin{align*}
    \Proba{(\mathcal{G}_\lambda^2)^c}&\leq 10\lambda^2\Proba{\alpha^X_{\lambda}(0)\in [0,2\lambda^{-5}]} \\
    & \leq 10\lambda^2\Proba{\exists r\in \llbracket 1, \lambda\rrbracket,\  0<X_{r,0}-X_{0,0}<2\lambda^{-4}} \\
    & \leq O\left(\lambda^{-1}\right).
\end{align*}
where in the last line we use the fact that each $X_{r,0}-X_0$ is a Gaussian random variable which is centered and of variance lower bounded by some positive quantity independent from $r$. This proves that the event $\mathcal{G}_\lambda^2$ has high probability when $\lambda$ goes to $\infty$. Finally, by Lemma \ref{lemma:trunc_1} (applied for $\varepsilon=\lambda^{-5}$ and $R=R_0=\lambda$) and an union bound, the event $\mathcal{G}_3$ has high probability as soon as 
$$\frac{2\beta-2}{3}> 10.$$
This condition is satisfied since we assume $\beta>16.$ This concludes the proof of Lemma \ref{lemma:seed_high_prob2}.
\end{proof}
We now provide the proof of Theorem \ref{thm:discrete_correlated}.
\begin{proof}[Proof of Theorem \ref{thm:discrete_correlated}]
We only prove the fact that $\{\alpha>\ell_2\}$ contains a unique unbounded cluster when $\ell_2>0$ is small enough. In fact the same arguments as in the proof of Theorem \ref{thm:continuous_l_grand} easily imply that $\{\alpha<\ell_1\}$ contains a unique unbounded cluster when $\ell_1$ is big enough.

Let $\mu=(\mu_n)_{n\in \N}$ and $\sigma = (\sigma_n)_{n\in \N}$ be two sequences of integers that are stationary and defined by
$$\forall n\in \N,\ \mu_n:=10^6\text{ and } \sigma_n := 100.$$
Then $\mu$ and $\sigma$ verify Conditions \ref{cond:c1} and \ref{cond:c3}. We therefore denote by $\varepsilon_0>0$ the parameter given by Proposition \ref{prop:renorm_high_prob}.
Let $\lambda_0\in \mathbb{N}^*$ to be fixed later. Recall that we define a sequence $(\lambda_n)_{n\in \N}$ by
$$\forall n\in \N,\ \lambda_{n+1}:= \mu_n\lambda_n.$$
Depending on $\lambda_0$ we introduce a renormalization scheme $\mathcal{R}_{\lambda_0}$ associated to $(\lambda_0,\mu,\sigma)$ for which the seed events and auxiliary events are defined as follows. For the seed events, if $u\in \mathbb{L}_0=\lambda_0\Z^2$, we denote by $\mathcal{A}_u^{(0)}$ the event that there exists a nearest-neighbor loop in $\Z^2$ separating $\partial (u+[0,\lambda_0]^2)$ and $\partial(u+[-\lambda_0,2\lambda_0]^2)$ and included in $\{\alpha^{X_{\lambda_0}}_{\lambda_0} > \frac{1}{(\lambda_0)^{5}}\}.$
For the auxiliary events, for $n\in \N^*$ and $u\in \mathbb{L}_n=\lambda_n\mathbb{Z}^2$, we denote by $\mathcal{B}_u^{(n)}$ the following event
\begin{equation}
    \mathcal{B}_u^{(n)}:= \left\{\forall v\in u+[-2\lambda_n,2\lambda_n]^2\cap \Z^2,\ |\alpha_{\lambda_0}^{X_{\lambda_n}}(v)-\alpha_{\lambda_0}^{X_{\lambda_{n-1}}}(v)|\leq \frac{2^{-n}}{2\lambda_0^5}\right\}.
\end{equation}
By Lemma \ref{lemma:trunc_1}, and since $\frac{2\beta-2}{3}>10$ we have
\begin{equation}
    \Proba{\left(\mathcal{B}_u^{(n)}\right)^c} \leq C\lambda_0^3(10^6)^{2n}\exp\left(-c4^{-n}(\lambda_0)^{\frac{2\beta-2}{3}-10}(10^6)^n\right).
\end{equation}
Therefore, we may find a small $\eta>0$ such that when $\lambda_0\geq 1$ is big enough, we have
\begin{equation}
    \label{eq:intermediate479863}
    \forall n\in \N^*,\ \Proba{\left(\mathcal{B}_u^{(n)}\right)^c} \leq 2^{-c\lambda_0^\eta 2^n}
\end{equation}
The rest of the proof is similar to the proof of Theorem \ref{thm:continuous_l_grand} and we describe the steps shortly. By Lemma \ref{lemma:seed_high_prob2} and \eqref{eq:intermediate479863} we see that when $\lambda_0$ is big enough then $\mathcal{R}_{\lambda_0}$ verifies Condition \ref{cond:c4} for $\varepsilon_0$. Moreover, as in the proof of Theorem \ref{thm:continuous_l_grand}, it is easy to check that $\mathcal{R}_{\lambda_0}$ also satisfies Condition \ref{cond:c2} (for any $\lambda_0\in \N^*$). We now fix $\lambda_0$ big enough so that Condition \ref{cond:c4} is verified. Proposition \ref{prop:renorm_high_prob} implies that,
\begin{equation}
    \forall n\in \N^*,\ \forall u\in \mathbb{L}_n,\ \Proba{\mathcal{A}^{(n)}_u}\geq 1-2^{-2^n}.
\end{equation}
Let $\lambda> \lambda_0$ and $N=N(\lambda)\in \N$ be the largest integer such that $L_N\leq \lambda$.
By the same construction as in the proof of Theorem \ref{thm:continuous_l_grand}. We see that with high probability (in terms of $\lambda$) we may find an nearest-neighbor path $v_0,\dots,v_k$ in $\mathbb{L}_0$ such that
\begin{enumerate}
    \item All $v_i$ are in $[-\lambda,3\lambda]\times [0,\lambda]$
    \item $v_0$ is in $[-\lambda,0]\times [0,\lambda]$ and $v_k$ in $[2\lambda,3\lambda]\times [0,\lambda].$
    \item For any $0\leq i\leq k$, the event $\mathcal{A}^{(0)}_{v_i}$ occurs.
    \item For any $0\leq i \leq k$, for any $1\leq n\leq N$, then for all $w\in v_i+[-\lambda_0,2\lambda_0]^2\cap \Z^2$ we have $$\left|\alpha_{\lambda_0}^{X_{\lambda_n}}(w)-\alpha_{\lambda_0}^{X_{\lambda_{n-1}}}(w)\right|\leq \frac{2^{-n}}{2\lambda_0^5}.$$
\end{enumerate}
This implies that with high probability then the rectangle $[0,2\lambda]\times [0,\lambda]$ is crossed by a nearest-neighbor  path of $\Z^2$ included in $\{\alpha^{X_{\lambda_N}}_{\lambda_0}\geq \frac{1}{2\lambda_0^5}\}.$
By Lemma \ref{lemma:trunc_1}, with high probability we have $|\alpha^{X_{\lambda_N}}_{\lambda_0}-\alpha^X_{\lambda_0}|\leq \frac{1}{4\lambda_0^5}$ on $[0,2\lambda]\times [0,\lambda]^2$. Therefore, with high probability the rectangle $[0,2\lambda]\times [0,\lambda]$ is crossed by an a nearest-neighbor path of $\Z^2$ included in $\{\alpha^X_{\lambda_0}\geq \frac{1}{4\lambda_0^5}\}.$
Since $\alpha^X\geq \alpha^X_{\lambda_0}$ we have proven that with high probability the rectangle
$[0,2\lambda]\times [0,\lambda]$ is crossed by an orthogonally connected path in $\{\alpha^X \geq \frac{1}{4\lambda_0^5}\}.$ The same is true for vertical rectangles and we may apply Theorem \ref{thm:crossings} to conclude the proof of Theorem \ref{thm:discrete_correlated}.
\end{proof}
The rest of this section is dedicated to the proof of Proposition \ref{prop:peierls_correl}.
We start by introducing some notations.
When $0<\delta<1$ we write
\begin{equation}
    \label{eq:def_CR}
    C(\delta) := \sqrt{\frac{1+\delta}{1-\delta}}
\end{equation}
We also introduce the following notations.
Let $A\subset \Z^2$ be a finite subset, we write $A=\{a_1,\dots,a_n\}$ where $n=|A|$ and where the $a_i$ are ordered according to the lexicographic order of $\Z^2$. When $x=(x_{a_1},\dots,x_{a_n}) \in \R^A$ and $\sigma \in \mathfrak{S}_n$ is a permutation over $\{1,\dots,n\}$ we say that $x$ follows the order of $\sigma$ and we write $x\sim \sigma$ if
\begin{equation}
    x_{a_{\sigma^{-1}(1)}}>x_{a_{\sigma^{-1}(2)}}>\dots>x_{a_{\sigma^{-1}(n)}}.
\end{equation}
We also denote by $\sigma\cdot x$ the element of $\R^A$ defined as
$$\sigma\cdot x = (x_{a_{\sigma(1)}}, \dots, x_{a_{\sigma(n)}}).$$
And we observe that if $x\sim \sigma$ then $\sigma'\cdot x\sim \sigma'\circ \sigma$.
When $x\in \R^A$ and $B\subset A$ is a subset of $A$ we write $x_{|B}\in \R^B$ the restriction of $x$ by keeping only the coordinates in $B$.

\begin{lemma}
\label{lemma:order}
Let $A\subset \Z^2$ be a finite subset. Let $R := \min\{\norm{u-v}{\infty} | u\neq v\in A\}$.
Let $f=q\ast W$ where $q$ satisfies Assumption \ref{a:q} for some $\beta>1$ and Assumption \ref{a:q2} for $(R,\delta)$ with some $0<\delta<1$.
Let $A_1,\dots,A_m$ be a partition of $A$. For $1\leq i \leq m$, denote by $n_i$ the cardinal of $A_i$ and let $\sigma_i \in \mathfrak{S}_{n_i}$. Then,
\begin{equation}
    \label{eq:order}
    \prod_{i=1}^m\frac{\left(C(\delta)^{-1}\right)^{n_i}}{n_i!}\leq \Proba{\forall 1\leq i\leq m,\ f_{|A_i}\sim \sigma_i} \leq \prod_{i=1}^m\frac{\left(C(\delta)\right)^{n_i}}{n_i!},
\end{equation}
where we recall that $C(\delta)=\sqrt{\frac{1+\delta}{1-\delta}}.$
\end{lemma}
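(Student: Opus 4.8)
The plan is to reduce the statement to a comparison between the joint law of $(f(a_1),\dots,f(a_n))$ and the law of $n$ i.i.d.\ standard Gaussians. For i.i.d.\ continuous random variables, every ordering of the coordinates has probability exactly $1/n!$; the point of the lemma is that the correlations present in $f$ only perturb these probabilities by a multiplicative factor controlled by $C(\delta)$. Concretely, write $\Sigma$ for the covariance matrix of $(f(a_i))_{1\le i\le n}$, which by Assumption \ref{a:q2} has ones on the diagonal and off-diagonal entries summing (in absolute value, in each row) to less than $\delta$ by the choice of $R=\min_{u\ne v\in A}\norm{u-v}{\infty}$; in particular $\Sigma = I + E$ where $E$ has all $\ell^1$-rows bounded by $\delta$, so $\norm{E}{op}\le \delta<1$ and $\Sigma$ is invertible with $\det \Sigma \in [(1-\delta)^{n}, (1+\delta)^{n}]$ (via the Gershgorin/Hadamard-type bounds on eigenvalues, each eigenvalue lying in $[1-\delta,1+\delta]$).

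Next I would compare densities pointwise. The density of $(f(a_i))_i$ is $p_\Sigma(x) = (2\pi)^{-n/2}(\det\Sigma)^{-1/2}\exp(-\tfrac12 x^\top \Sigma^{-1} x)$, while the i.i.d.\ density is $p_I(x)=(2\pi)^{-n/2}\exp(-\tfrac12 \norm{x}{}^2)$. The key inequality is that $x^\top \Sigma^{-1}x \in [\tfrac{1}{1+\delta}\norm{x}{}^2, \tfrac{1}{1-\delta}\norm{x}{}^2]$ since the eigenvalues of $\Sigma$ lie in $[1-\delta,1+\delta]$. This gives
\begin{equation}
    (1-\delta)^{-n/2}p_I\!\left(\tfrac{x}{\sqrt{1+\delta}}\right)\cdot(1+\delta)^{-n/2}\ \le\ p_\Sigma(x)\ \le\ (1+\delta)^{-n/2}p_I\!\left(\tfrac{x}{\sqrt{1-\delta}}\right)\cdot(1-\delta)^{-n/2},
\end{equation}
or rather, after keeping track of the normalizing constants more carefully, a clean bound of the form: for every measurable cone $D\subset \R^n$ invariant under positive scaling, $\Proba{(f(a_i))_i \in D}$ lies between $C(\delta)^{-n}$ and $C(\delta)^{n}$ times the Gaussian measure $\gamma_n(D)$ of the same cone under the standard law. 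The set $\{x : x\sim \sigma\}$ is exactly such a scaling-invariant cone (an orthant-like region cut out by the hyperplanes $x_{a_{\sigma^{-1}(k)}}=x_{a_{\sigma^{-1}(k+1)}}$), and $\gamma_n(\{x\sim\sigma\})=1/n!$ by exchangeability of i.i.d.\ coordinates. This handles the case $m=1$ (a single block $A=A_1$).

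For general $m$, the events $\{f_{|A_i}\sim\sigma_i\}$ are not independent, so I would not simply multiply the single-block estimates. Instead, the cleanest route is to apply the single-block argument directly to the whole vector but against the ordering set $D=\{x : \forall i,\ x_{|A_i}\sim\sigma_i\}$, which again is a scaling-invariant cone in $\R^n$ (intersection of the coordinate inequalities within each block). One then needs $\gamma_n(D) = \prod_{i=1}^m 1/n_i!$, which holds because under the i.i.d.\ standard Gaussian law the blocks $A_1,\dots,A_m$ are independent and within each block every ordering has probability $1/n_i!$. Then the same density-ratio bound $C(\delta)^{-n}\le p_\Sigma/p_I(\text{scaled})\le C(\delta)^{n}$ — with $n=\sum_i n_i$ — yields
\begin{equation}
    C(\delta)^{-\sum_i n_i}\prod_{i=1}^m \frac{1}{n_i!}\ \le\ \Proba{\forall i,\ f_{|A_i}\sim\sigma_i}\ \le\ C(\delta)^{\sum_i n_i}\prod_{i=1}^m\frac{1}{n_i!},
\end{equation}
which is exactly \eqref{eq:order} after writing $C(\delta)^{\sum n_i}=\prod_i C(\delta)^{n_i}$.

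The main obstacle I anticipate is the bookkeeping in the density comparison: getting the normalizing constants $(\det\Sigma)^{-1/2}$ and the change-of-variables Jacobians from the rescalings $x\mapsto x/\sqrt{1\pm\delta}$ to combine into precisely the symmetric factor $C(\delta)^{\pm n}=\big(\tfrac{1+\delta}{1-\delta}\big)^{\pm n/2}$, rather than a cruder constant. The clean way to organize this is to note that for any cone $D$,
\begin{equation}
    \Proba{(f(a_i))_i\in D}=\int_D p_\Sigma(x)\,dx,\qquad \gamma_n(D)=\int_D p_I(x)\,dx,
\end{equation}
and that on $D$ (indeed everywhere) $\tfrac{p_\Sigma(x)}{p_I(x)} = (\det\Sigma)^{-1/2}\exp\big(-\tfrac12 x^\top(\Sigma^{-1}-I)x\big)$; bounding $(\det\Sigma)^{-1/2}\le (1-\delta)^{-n/2}$ and $\exp(-\tfrac12 x^\top(\Sigma^{-1}-I)x)\le \exp(\tfrac12(\tfrac{1}{1-\delta}-1)\norm{x}{}^2)$ is not scaling-invariant, so instead one should bound the ratio of measures directly, $\int_D p_\Sigma \le \sup_{x}\frac{p_\Sigma(x)}{p_{(1-\delta)I}(x)}\cdot \int_D p_{(1-\delta)I}(x)$, using that $D$ is scaling-invariant so $\int_D p_{(1-\delta)I}=\int_D p_I=\gamma_n(D)$, and the supremum of $p_\Sigma/p_{(1-\delta)I}$ equals $\big(\det((1-\delta)^{-1}\Sigma)\big)^{-1/2}\le C(\delta)^{n}$ since all eigenvalues of $(1-\delta)^{-1}\Sigma$ are $\ge 1$ and $\le \tfrac{1+\delta}{1-\delta}$. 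The lower bound is symmetric, comparing against $p_{(1+\delta)I}$. Once this scaling-invariance trick is in place the rest is routine.
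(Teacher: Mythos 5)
Your approach is genuinely different from the paper's and is in some ways cleaner. The paper never computes the i.i.d.\ Gaussian mass of the ordering region directly; instead it compares $\Proba{X_A\in F_A^\sigma}$ with $\Proba{X_A\in F_A^{\sigma'}}$ for a second tuple of orderings via the change of variables through $P^{\sigma'\circ\sigma^{-1}}$ followed by a rescaling $y=\sqrt{C_A}z$ with $C_A=\lambda_{\max}/\lambda_{\min}$, and then sums over all $\prod_i n_i!$ choices of $\sigma'$ (whose probabilities add to one) to eliminate the unknown factor. Your density-ratio route reaches the same constants more directly: you exploit that the ordering region is a cone, so its mass under every isotropic Gaussian $\mathcal{N}(0,cI)$ equals $\prod_i 1/n_i!$, and then you bound the pointwise ratio $p_\Sigma/p_{cI}$. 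Both routes use Gershgorin for the eigenvalue bound, and both ultimately rely on the cone (scaling-invariant) structure of $F_A^\sigma$ — the paper implicitly, when it asserts that the rescaling $y=\sqrt{C_A}z$ does not change the domain $F_A^{\sigma'}$ of integration.

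There is, however, a sign error in your final density comparison. For the upper bound on $\int_D p_\Sigma$ you should compare against $p_{(1+\delta)I}$, not $p_{(1-\delta)I}$. The ratio is
$p_\Sigma(x)/p_{(1+\delta)I}(x)=\bigl(\det((1+\delta)^{-1}\Sigma)\bigr)^{-1/2}\exp\bigl(\tfrac12 x^\top(\tfrac{1}{1+\delta}I-\Sigma^{-1})x\bigr)$,
and since all eigenvalues of $\Sigma$ are $\leq 1+\delta$ the quadratic form in the exponent is nonpositive, so the ratio attains its supremum at $x=0$, which equals $\bigl(\det((1+\delta)^{-1}\Sigma)\bigr)^{-1/2}\leq\bigl(\tfrac{1+\delta}{1-\delta}\bigr)^{n/2}=C(\delta)^n$. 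With your choice $p_{(1-\delta)I}$ the quadratic form $\tfrac{1}{1-\delta}I-\Sigma^{-1}$ is positive semi-definite (eigenvalues of $\Sigma$ are $\geq 1-\delta$), so the ratio is unbounded above; the quantity $(\det((1-\delta)^{-1}\Sigma))^{-1/2}$ you identify is actually its infimum, not its supremum, and the bound $\int_D p_\Sigma \leq \sup_x(p_\Sigma/p_{(1-\delta)I})\cdot\gamma_n(D)$ is vacuous with $\sup=+\infty$. Symmetrically, the lower bound should compare against $p_{(1-\delta)I}$ (infimum of the ratio at $x=0$, value $\geq C(\delta)^{-n}$), not $p_{(1+\delta)I}$. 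After swapping the two comparison densities the rest of your argument is correct and yields the statement.
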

\begin{proof}
Denote by $X_A := f_{|A} \in \R^A$. Then $X_A$ is a non degenerated Gaussian vector. We write $\Sigma_A$ the covariance matrix of $X_A$ which is a positive definite matrix.
Let $n:=|A|$. Let $\sigma=(\sigma_1,\dots,\sigma_m) \in \mathfrak{S}_{n_1}\times\cdots \times \mathfrak{S}_{n_m}$. We see that $\sigma$ naturally corresponds to a permutation of $\mathfrak{S}_n$. We denote by $F_A^{\sigma}$ the subset of $\R^A$ defined by
\begin{equation}
    F_A^{\sigma} := \{x\in \R^A\ |\ \forall 1\leq i \leq m,\ x_{|A_i}\sim \sigma_i\}.
\end{equation}
We can rewrite the probability in \eqref{eq:order} as
\begin{equation}
    \label{eq:order_2}
    \Proba{X_A\in F_A^{\sigma}}=\frac{1}{Z_A}\int_{F_A^{\sigma}}\exp\left(-\frac{1}{2}\scal{x}{(\Sigma_A)^{-1}x}\right)dx,
\end{equation}
where $dx$ denotes the Lebesgue measure on $\R^A$, $Z_A := \sqrt{(2\pi)^n\det(\Sigma_A)}>0$ and $\scal{x}{y}$ denotes the usual Euclidean scalar product between two elements of $\R^A$ (that is, with matrix notations, $\scal{x}{y}=x^Ty$).

Let $\sigma'=(\sigma'_1\dots, \sigma'_m)$ be another element of $\mathfrak{S}_{n_1}\times \cdots\times \mathfrak{S}_{n_m}$.
Then, we observe that the sets $F_A^{\sigma}$ and $F_A^{\sigma'}$ are related by the following relation
$$x\in F_A^{\sigma'}\Leftrightarrow P^{\sigma'\circ\sigma^{-1}}x \in F_A^{\sigma},$$
where $P^{\sigma'\circ\sigma^{-1}}$ is the permutation matrix associated to the permutation $\sigma'\circ \sigma^{-1}$ (that is, $P^{\sigma'\circ\sigma^{-1}}$ is a matrix diagonal by block, and on each $\R^{A_k}$ it is given by the permutation matrix associated to $\sigma'_k \circ (\sigma_k)^{-1}$).
Doing the change of coordinate $x=P^{\sigma'\circ \sigma^{-1}}y$ in \eqref{eq:order_2} and since $P^{\sigma'\circ \sigma^{-1}}$ is an orthogonal matrix  we obtain
\begin{equation}
    \label{eq:order_3}
    \Proba{X_A\in F_A^{\sigma}}=\frac{1}{Z_A}\int_{F_A^{\sigma'}}\exp\left(-\frac{1}{2}\scal{P^{\sigma'\circ\sigma^{-1}}y}{(\Sigma_A)^{-1}P^{\sigma'\circ\sigma^{-1}}y}\right)dy.
\end{equation}
Now given $P$ an orthogonal matrix on $\R^A$ we see that $(\Sigma_A)^{-1}$ and $P^T (\Sigma_A)^{-1}P$ are two positive definite matrices with the same eigenvalues. Denote by $\lambda_{\text{max}}>0$ and $\lambda_{\text{min}}>0$ the maximal and minimal eigenvalues of $\Sigma_A$.
Then for any $x\in \R^A$ we have
\begin{equation}
    \frac{1}{\lambda_\text{max}}\norm{x}{}^2\leq x^TP^T\Sigma_A^{-1}Px\leq \frac{1}{\lambda_\text{min}}\norm{x}{}^2.
\end{equation}
And therefore,
\begin{equation}
    \frac{\lambda_\text{min}}{\lambda_\text{max}}x^T\Sigma_A^{-1}x \leq x^TP^T\Sigma_A^{-1}Px\leq \frac{\lambda_\text{max}}{\lambda_\text{min}}x^T\Sigma_A^{-1}x.
\end{equation}
Let $C_A=\frac{\lambda_\text{max}}{\lambda_\text{min}}\geq 1$.
By using these inequalities in \eqref{eq:order_3} and doing a change of variable $y=\sqrt{C_A}z$ we get
\begin{align*}
    \Proba{f_{|A}\in F_A^{\sigma}} & \leq \frac{1}{Z_A}\int_{F_A^{\sigma'}}\exp\left(-\frac{1}{2C_A}\scal{y}{(\Sigma_A)^{-1}y}\right)dy. \\
    &\leq \frac{C_A^{n/2}}{Z_A}\int_{F_A^{\sigma'}}\exp\left(-\frac{1}{2}\scal{z}{(\Sigma_A)^{-1}z}\right)dy\\
    &\leq C_A^{n/2}\Proba{f_{|A}\in F_A^{\sigma'}}.
\end{align*}
By the same argument, a similar lower bound is obtained, and we see that for all $\sigma,\sigma'$ we have
\begin{equation}
    (C_A)^{-n/2}\Proba{X_A\in F_A^{\sigma'}} \leq \Proba{X_A\in F_A^\sigma}\leq (C_A)^{n/2}\Proba{X_A\in F_A^{\sigma'}}.
\end{equation}
We can sum these inequalities over all $\sigma'$. Note that there are exactly $n_1!\dots n_m!$ choices for $\sigma'$ and that since $f_{|A}$ is a non degenerated Gaussian vector we have
$$\sum_{\sigma'\in \mathfrak{S}_{n_1}\times \cdots\times \mathfrak{S}_{n_m}}\Proba{f_{|A}\in F_A^{\sigma'}}=1.$$
We obtain
\begin{equation}
    (C_A)^{-n/2}\leq \left(\prod_{i=1}^m n_i!\right)\Proba{f_{|A}\in F_A^\sigma}\leq (C_A)^{n/2}.
\end{equation}
Therefore to conclude to the proof of the lemma, it is enough to show that one has $C_A \leq C(\delta)^2= \frac{1+\delta}{1-\delta}.$
This is a direct application of Gershgorin circle theorem. In fact the diagonal entries of $\Sigma_A$ are equal to $1$ and along each row, the sum of the absolute values of the non diagonal entries is upper bounded by $\delta$. Therefore all eigenvalues of $\Sigma_A$ lie in the interval $[1-\delta,1+\delta].$
This concludes the proof of this lemma.
\end{proof}
We conclude this section with the proof of Proposition \ref{prop:peierls_correl}.
\begin{proof}[Proof of Proposition \ref{prop:peierls_correl}]
Let $\rho\in ]0,1[$. Let $\delta_0\in ]0,1/2[$ to be fixed later (depending on $\rho$). Let $f=q\ast W$ be such that Assumption \ref{a:q2} is verified for $(1,\delta)$ with $\delta\in ]0,\delta_0[$ Let $R\in \N^*$ be a free parameter for now.
Let $I:= \{i\in \Z\ |\ A\cap \Z\times \{i\}\neq \emptyset\}.$
We write $A=\sqcup_{i\in I}A^i\times \{i\}$ where each $A^i$ is a subset of $\Z.$
For each $A^i$ we write it as,
$$A^i = \sqcup_{j=1}^{n_i}B^i_j.$$
where the $B^i_j$ are non-empty subsets of $A^i$ such that
\begin{itemize}
    \item Each set $B^i_j$ is \textit{$R-$connected}, that is, for any $b\in B^i_j$, either $b$ is the maximum of $B^i_j$ or there exists $b'\in B^i_j$ such that $b<b'\leq b+R.$
    \item For any $1\leq j\leq n_i-1$ we have $\max B^i_j+R < \min B^i_{j+1}.$
\end{itemize}
Let $n_0=n_0(\rho)\geq 1$ be such that
\begin{equation}
    \forall n\geq n_0,\ \frac{2^n}{n!}\leq \rho^{2n}.
\end{equation}
We observe that my monotonicity of $\delta\mapsto C(\delta)=\sqrt{\frac{1+\delta}{1-\delta}}.$
Then for $0<\delta<\delta_0<\frac{1}{2}$ we have
\begin{equation}
    \label{eq:condition_n0}
    \forall n\geq n_0,\ \frac{C(\delta)^n}{n!}\leq \frac{C(\delta_0)^n}{n!} \leq \frac{2^n}{n!} \leq \rho^{2n}.
\end{equation}
Denote by $I(n_0)$ and $\tilde{I}(n_0)$ the sets
\begin{align}
    I(n_0):= \{(i,j)\in \Z\times \mathbb{N}^*\ |\ i\in I,\ 1\leq j\leq n_i,\ |B_j^i|\geq n_0\} \\
    \tilde{I}(n_0) := \{(i,j)\Z\times \mathbb{N}^*\ |\ i\in I,\ 1\leq j\leq n_i,\ |B_j^i|< n_0\}
\end{align}
We also denote by $S(n_0)$ and $\tilde{S}(n_0)$ the quantities
\begin{align}
    S(n_0) := \sum_{(i,j)\in I(n_0)}|B^i_j| \\
    \tilde{S}(n_0) := \sum_{(i,j) \in \tilde{I}(n_0)}|B^i_j|
\end{align}
Denote by $\mathcal{E}$ the event
\begin{equation}
    \mathcal{E}:= \left\{\forall u\in A,\ \alpha^X_R(u)<0\right\}.
\end{equation}
Then in particular, due to the definition of $\alpha^X_R$ and the fact that the $B_j^i$ are $R$-connected, the event $\mathcal{E}$ implies that for all $(i,j)$ we have $X_{|B_j^i}\sim \text{id}_{|B_j^i|}.$ We distinguish two cases. Assume first that $S(n_0)\geq \frac{|A|}{2}$. Then,
\begin{equation}
    \Proba{\mathcal{E}}\leq \Proba{\forall (i,j)\in I(n_0),\ X_{|B_j^i}\sim \text{id}_{|B_j^i|}}
\end{equation}
By Lemma \ref{lemma:order}, we find that
\begin{equation}
    \Proba{\mathcal{E}}\leq \prod_{(i,j)\in I(n_0)}\frac{C(\delta)^{|B_j^i|}}{|B_j^i|!}.
\end{equation}
By our choice of $n_0$ (see \eqref{eq:condition_n0}) and the definition of $I(n_0)$ we find
\begin{equation}
    \Proba{\mathcal{E}}\leq (\rho^2)^{S(n_0)}\leq \rho^{|A|}.
\end{equation}
This concludes the proof in the case $S(n_0)\geq \frac{|A|}{2}.$
We now assume that $S(n_0)\leq \frac{|A|}{2}$. Since $S(n_0)+\tilde{S}(n_0)=|A|$, this implies $\tilde{S}(n_0)\geq \frac{|A|}{2}.$ Moreover we have $\tilde{S}(n_0)\leq n_0|\tilde{I}(n_0)|.$ Therefore we have
\begin{equation}
    |\tilde{I}(n_0)|\geq \frac{|A|}{2n_0}.
\end{equation}
Denote by $A'\subset A$ a subset built by choosing (in an arbitrary way) exactly one point in each of the $B^i_j\times \{i\}$ for $(i,j)\in \tilde{I}(n_0)$. Therefore we have $|A'|\geq \frac{|A|}{2n_0}.$ Let $R_0 = R_0(\rho)\in \N^*$ be such that
\begin{equation}
    \label{eq:choice_R0}
    \frac{1}{R_0}\leq \frac{\rho^{2n_0}}{2}.
\end{equation}
We assume in the following that $R\geq R_0.$ By construction we see that two points of $A'$ that are on the same row of $\Z^2$ must be at distance at least $R$ from one another (henceforth at distance at least $R_0$) however two points on different rows may be at distance $1$. 
For $(u,v)\in A'$ we denote by $\mathcal{R}(u,v)$ the set
\begin{equation}
    \mathcal{R}(u,v) := \{(u+k,v)\ |\ 0\leq k< R_0\}.
\end{equation}
Denote by $\mathcal{R}$ the union of all $\mathcal{R}(u,v)$
\begin{equation}
    \mathcal{R} := \bigcup_{(u,v)\in A'} \mathcal{R}(u,v).
\end{equation}
By construction, for $(u,v)\neq (u',v')$ two elements of $A'$ then $\mathcal{R}(u,v)\cap \mathcal{R}(u',v')=\emptyset$, therefore $\mathcal{R}$ contains exactly $R_0|A|$ points.
Moreover on the event $\mathcal{E}$ we see that for each $(u,v)\in A'$, then $X_{u,v}$ must be maximum in the collection $(X_{u',v'})_{(u',v')\in R(u,v)}$
Note that there are $R_0$ elements in $R(u,v)$ and among the $R_0!$ permutations of $\mathfrak{S}_{R_0}$ there are $(R_0-1)!$ that have $1$ as a fixed point. Therefore, applying Lemma \ref{lemma:order} and summing over all these permutations one gets
\begin{equation}
    \label{eq:buir}
    \Proba{\mathcal{E}}\leq \prod_{(u,v)\in A'}\frac{C(\delta)^{R_0}}{R_0} \leq \prod_{(u,v)\in A'}\frac{C(\delta_0)^{R_0}}{R_0}
\end{equation}
We now choose $\delta_0>0$ small enough depending on $\rho$ such that
\begin{equation}
    \frac{C(\delta_0)^{R_0}}{R_0} \leq \rho^{2n_0}.
\end{equation}
This is possible due to our choice of $R_0$ (see \eqref{eq:choice_R0}) and since $C(\delta)$ can be made arbitrarily close to $1$ when $\delta$ is close to $0$.
To conclude, we see that for our parameters $\delta,R_0$ depending on $\rho$, then for any $R\geq R_0$ and $\delta<\delta_0$,
\begin{equation}
    \Proba{\mathcal{E}}\leq \prod_{(u,v)\in A'}\frac{C(\delta_0)^{R_0}}{R_0} \leq \rho^{2n_0|A'|} \leq \rho^{|A|},
\end{equation}
where the last inequality comes from the fact that we have $|A'|\geq \frac{|A|}{2n_0}.$ This concludes the proof of Proposition
\ref{prop:peierls_correl}.
\end{proof}

\appendix

\section{Gluing constructions for planar percolation}
\label{sec:appendixA}
This appendix is dedicated to the proof of Theorem \ref{thm:crossings}. The ideas in this appendix are not new and often referred to as \textit{gluing constructions}. In particular our proof of Theorem \ref{thm:crossings} is heavily inspired by what is done in \cite{BG17}, \cite{MV20}. The main difference is that we state and prove the result without assuming invariance by rotation (although it does not change much of the proof).
In the following we let $g: \R^2\to R$ be a random function defined on some probability space. We simply require that for each $\ell\in \R$, the excursion set \begin{equation}
    \mathcal{E}_\ell(g) := \{x\in \R^2\ |\ g(x)+\ell\geq 0\}
\end{equation}
is measurable.
First, recall Definition \ref{def:cross} of $\cross^h(\mathcal{R})$ and $\cross^v(\mathcal{R})$. We make the following definition.
\begin{definition}
Let $\ell\in \R$, $\rho \in \R_+^*$ and $(\lambda_n)_{n\in \N}$ be a strictly increasing sequence of positive numbers that diverges to infinity. We denote by $H(\rho,\ell,(\lambda_n)_n)$ the following statement:
\begin{align}
    H(\rho,\ell,(\lambda_n)) :\quad &\exists c,C>0,\ \forall n\in \N,\ \forall x\in \R^2,\nonumber\\ & \Proba{g+\ell\in \cross^h(x+[0,\rho\lambda_n]\times [0,\lambda_n]}\geq 1-Ce^{-c\lambda_n}.
\end{align}
Similarly, the statement $V(\rho,\ell, (\lambda_n)_n)$ is defined by
\begin{align}
    V(\rho,\ell,(\lambda_n)) :\quad & \exists c,C>0,\ \forall n\in \N,\ \forall x\in \R^2,\nonumber\\ &  \Proba{g+\ell\in \cross^v(x+[0,\lambda_n]\times [0,\rho\lambda_n]}\geq 1-Ce^{-c\lambda_n}.
\end{align}
Moreover, we denote by $H(\rho,\ell)$ the statement: 
\begin{align}
    H(\rho,\ell) :\quad & \exists C,c>0,\ \forall \lambda\geq 1,\ \forall x\in \R^2,\\& \Proba{g+\ell\in \cross^h(x+[0,\rho\lambda]\times [0,\lambda]}\geq 1-Ce^{-c\lambda}.
\end{align}
We similarly define $V(\rho,\ell)$ by
\begin{align}
    V(\rho,\ell) :\quad & \exists C,c>0,\ \forall \lambda\geq 1,\ \forall x\in \R^2,\\& \Proba{g+\ell\in \cross^v(x+[0,\lambda]\times [0,\rho\lambda]}\geq 1-Ce^{-c\lambda}.
\end{align}
\end{definition}
More informally, $H(\rho,\ell,(\lambda_n))$ says that rectangles of shape $\rho\lambda_n \times \lambda_n$ are crossed horizontally by $g+\ell\geq 0$ with very high probability. The event $H(\rho,\ell)$ may be understood by saying that one may choose the constants $c,C$ independent from the choice of a sequence $(\lambda_n)_n.$
We have the following results that come from gluing constructions.
\begin{lemma}
\label{lemma:grow_rectangle}
Let $\ell\in \R$, $\rho>\rho'>0$, $\varepsilon>0$ and $(\lambda_n)_n$ be a sequence of positive numbers that is strictly increasing and that diverges to infinity. In the following, we let $H(\rho)$ and $V(\rho)$  denote one of $H(\rho,\ell,(\lambda_n)_n)$ and $V(\rho,\ell, (\lambda_n)_n)$ or $H(\rho,\ell)$ and $V(\rho,\ell)$.
\begin{itemize}
    \item If $H(\rho)$ holds then so does $H(\rho')$. If $V(\rho)$ holds then so does $V(\rho').$
    \item If $H(1+\varepsilon)$ and $V(1)$ hold then so does $H(1+2\varepsilon)$. If $H(1)$ and $V(1+\varepsilon)$ holds then so does $V(1+2\varepsilon).$
\end{itemize}
\end{lemma}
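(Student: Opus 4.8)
Both items are purely geometric consequences of the definition of the crossing events together with a union bound; no property of the law of $g$ is used beyond the uniformity in the translation $x\in\R^2$ that is built into the statements $H,V$. Moreover the version with a fixed sequence $(\lambda_n)$ and the version with constants uniform in $\lambda\ge 1$ are proved by the very same manipulation, only bookkeeping the constants $c,C$; so I describe the argument once, at a generic scale $\lambda$ (read $\lambda=\lambda_n$ for the sequence version), and I treat only the horizontal statements $H$ — the statements about $V$ are obtained by reflecting the whole picture across the diagonal $\{x_1=x_2\}$, which exchanges $\cross^h$ and $\cross^v$; since the construction below uses no property of the law of $g$, this reflection is legitimate. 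Throughout I use two standard planar facts: (i) a connected subset of $\mathcal E_0(g+\ell)$ realising a left–right crossing of a rectangle $\mathcal R$, when intersected with a sub-rectangle $\mathcal R'\subset\mathcal R$ sharing the left side, contains a connected subset realising a left–right crossing of $\mathcal R'$ (follow the crossing cluster from the left side of $\mathcal R'$ until it first reaches the right side of $\mathcal R'$, which it does by connectedness); and (ii) a left–right crossing cluster and a top–bottom crossing cluster of the same rectangle must intersect.

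\emph{Item 1 (monotonicity in the aspect ratio).} Fix $\rho>\rho'>0$, a scale $\lambda$ and $x\in\R^2$. The rectangle $\mathcal R':=x+[0,\rho'\lambda]\times[0,\lambda]$ is a sub-rectangle of $\mathcal R:=x+[0,\rho\lambda]\times[0,\lambda]$ sharing its left side. By fact (i), on the event $\{g+\ell\in\cross^h(\mathcal R)\}$ one also has $g+\ell\in\cross^h(\mathcal R')$. Hence $\Proba{g+\ell\in\cross^h(\mathcal R')}\ge\Proba{g+\ell\in\cross^h(\mathcal R)}$ for every $x$, so the lower bound asserted by $H(\rho')$ follows from the one asserted by $H(\rho)$ with the same constants $c,C$. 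The vertical statement is the diagonal reflection of this.

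\emph{Item 2 (enlarging the aspect ratio by gluing).} Assume $H(1+\varepsilon)$ and $V(1)$; we must produce a horizontal crossing of $\mathcal R:=x+[0,(1+2\varepsilon)\lambda]\times[0,\lambda]$ with the stated probability. To lighten notation suppress the translation $x$ (the argument is identical for every $x$, and the hypotheses $H(1+\varepsilon),V(1)$ are uniform in the translation). Set
$$R_1:=[0,(1+\varepsilon)\lambda]\times[0,\lambda],\qquad R_2:=[\varepsilon\lambda,(1+2\varepsilon)\lambda]\times[0,\lambda],\qquad S:=R_1\cap R_2=[\varepsilon\lambda,(1+\varepsilon)\lambda]\times[0,\lambda],$$
so that $R_1,R_2$ are translates of a $(1+\varepsilon)\lambda\times\lambda$ rectangle and $S$ is a $\lambda\times\lambda$ square. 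Let $E_1:=\{g+\ell\in\cross^h(R_1)\}$, $E_2:=\{g+\ell\in\cross^h(R_2)\}$, $E_3:=\{g+\ell\in\cross^v(S)\}$, with crossing clusters $K_1\subset\mathcal E_0(g+\ell)\cap R_1$, $K_2\subset\mathcal E_0(g+\ell)\cap R_2$, $K_3\subset\mathcal E_0(g+\ell)\cap S$. On $E_1\cap E_2\cap E_3$: by fact (i), $K_1\cap S$ contains a left–right crossing of $S$ (its right side $\{x_1=(1+\varepsilon)\lambda\}$ is reached by $K_1$, while $K_1$ also meets the line $\{x_1=\varepsilon\lambda\}$ because it reaches $\{x_1=0\}$), and likewise $K_2\cap S$ contains a left–right crossing of $S$; by fact (ii), $K_3$ meets both of these, hence $K_3$ meets $K_1$ and $K_3$ meets $K_2$. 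Therefore $K_1\cup K_2\cup K_3$ is a connected subset of $\mathcal E_0(g+\ell)\cap\mathcal R$ meeting $\{x_1=0\}$ and $\{x_1=(1+2\varepsilon)\lambda\}$, i.e. $g+\ell\in\cross^h(\mathcal R)$. Consequently
$$\Proba{g+\ell\notin\cross^h(\mathcal R)}\le\Proba{E_1^c}+\Proba{E_2^c}+\Proba{E_3^c}\le 3\,C_0\,e^{-c_0\lambda},$$
with $C_0=\max(C_H,C_V)$ and $c_0=\min(c_H,c_V)$ the constants furnished by $H(1+\varepsilon)$ and $V(1)$ (at scale $\lambda=\lambda_n$ in the sequence version). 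After renaming constants this is exactly $H(1+2\varepsilon)$. The implication $H(1),V(1+\varepsilon)\Rightarrow V(1+2\varepsilon)$ is the diagonal reflection of what we just proved.

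\emph{Main obstacle.} There is no analytic difficulty; the only points that deserve care are the two planar-topology facts (i) and (ii) — that a crossing cluster restricts to a crossing cluster of a sub-rectangle, and that a horizontal and a vertical crossing cluster of a common rectangle intersect. These are classical in this literature (see e.g. \cite{BG17}, \cite{MV20}); I would either invoke them directly or, if one insists on full rigour at the generality of Theorem \ref{thm:crossings} (only measurability of the excursion sets is assumed), first reduce the crossing events to discrete crossing events on a fine lattice, where both facts are elementary. The one quantitative point worth flagging is the choice of the overlap: taking $S=R_1\cap R_2$ of width \emph{exactly} $\lambda$ makes it a square, so that $V(1)$ — and not some stronger $V(1+\varepsilon')$ — is what is needed to glue.
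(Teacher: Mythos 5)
Your proof is correct and follows essentially the same route as the paper: monotonicity by inclusion for the first item, and for the second item the same three rectangles $R_1,R_2$ and the square $S=R_1\cap R_2$ (the paper's $\mathcal R_3$), glued by a union bound. The paper is terser (it just points to a figure for the topological gluing), while you spell out the two planar-topology facts and the diagonal-reflection reduction of $V$ to $H$, which is a reasonable way to fill in those details.
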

\begin{figure}
    \centering
    \includegraphics[width=0.8\textwidth]{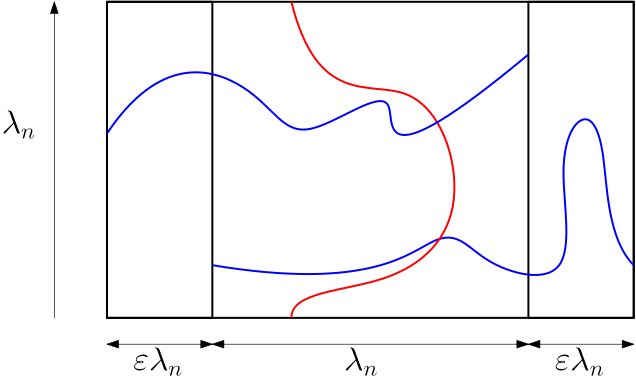}
    \caption{Illustration of the gluing construction in Lemma \ref{lemma:grow_rectangle}, in blue the horizontal crossings of $\mathcal{R}_1$ and $\mathcal{R}_2$ and in red the vertical crossing of $\mathcal{R}_3$.}
    \label{fig:grow}
\end{figure}
\begin{proof}
The proof of the first statement is immediate by inclusion of the corresponding rectangles. For the second statement we prove that  $H(1+\varepsilon,\ell, (\lambda_n)_n)$ and $V(1,\ell, (\lambda_n)_n)$ imply $H(1+2\varepsilon,\ell, (\lambda_n)_n)$ the proof of the other cases are completely similar. Let $x\in \R^2$ and $n\in \N$. We consider the the rectangles
$$\mathcal{R}_1 = x+[0,(1+\varepsilon)\lambda_n]\times [0,\lambda_n].$$
$$\mathcal{R}_2 = x+[\varepsilon \lambda_n,(1+2\varepsilon)\lambda_n]\times [0,\lambda_n].$$
$$\mathcal{R}_3 = x+[\varepsilon \lambda_n,(1+\varepsilon)\lambda_n]\times [0,\lambda_n].$$
By $H(1+\varepsilon,\ell, (\lambda_n)_n)$ and $V(1,\ell, (\lambda_n)_n)$ we may find constants $C,c>0$ (independent from $n$ and $x$) such that
\begin{align}
    \Proba{g+\ell \in \cross^h(\mathcal{R}_1)}\geq 1-Ce^{-c\lambda_n}, \\
    \Proba{g+\ell \in \cross^h(\mathcal{R}_2)}\geq 1-Ce^{-c\lambda_n}, \\
    \Proba{g+\ell \in \cross^v(\mathcal{R}_3)}\geq 1-Ce^{-c\lambda_n}.
\end{align}
By a union bound we find
\begin{equation}
    \Proba{g+\ell \in \cross^h(\mathcal{R}_1)\cap \cross^h(\mathcal{R}_2) \cap \cross^v(\mathcal{R}_3)}\geq 1-3Ce^{-c\lambda_n}.
\end{equation}
However, on this intersection event, then $g+\ell$ also belongs to $\cross^h(x+[0,(1+2\varepsilon) \lambda_n]\times [0,\lambda_n]$ (see Figure \ref{fig:grow}).
This concludes the proof since the constants $C,c>0$ do not depend on $n\in \N$ and $x\in \R^2$.

\end{proof}

\begin{lemma}
\label{lemma:smooth_rectangle}
Let $\ell\in \R, \rho>1$ and $(\lambda_n)_n$ be a sequence of positive numbers that is strictly increasing and that diverges to infinity. Assume furthermore that
\begin{equation}
    \sup_{n\in \N}\frac{\lambda_{n+1}}{\lambda_n}<\infty.
\end{equation}
Then,
\begin{align}
    H(\rho,\ell,(\lambda_n)_n) \text{ and } V(1,\ell, (\lambda_n)_n) \Rightarrow H(\rho,\ell), \\
    V(\rho,\ell,(\lambda_n)_n) \text{ and } H(1,\ell, (\lambda_n)_n) \Rightarrow V(\rho,\ell).
\end{align}
\end{lemma}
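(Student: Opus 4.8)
The plan is to ``fill in the gaps'' between consecutive scales $\lambda_n$ by a gluing construction carried out at a single fixed scale, iterating the mechanism of Lemma~\ref{lemma:grow_rectangle} a bounded number of times. Set $K:=\sup_{n}\lambda_{n+1}/\lambda_n<\infty$. I will prove the first implication; the second is obtained by the same argument with the roles of horizontal and vertical rectangles (and of $H$ and $V$) interchanged — note that one cannot appeal to rotation invariance here, since it is not assumed.

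First I would reduce to large $\lambda$. For $1\le\lambda\le\lambda_0$ the bound $\Proba{g+\ell\in\cross^h(x+[0,\rho\lambda]\times[0,\lambda])}\ge 1-Ce^{-c\lambda}$ holds trivially once $C$ is chosen so large that $Ce^{-c\lambda_0}\ge 1$; hence it suffices to establish the estimate, with uniform constants, for $\lambda\ge\lambda_0$. For such $\lambda$ there is a unique $n$ with $\lambda_n\le\lambda<\lambda_{n+1}$, and then $\lambda/K\le\lambda_n\le\lambda$.

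Next, fix $x\in\R^2$; by translating we may take $x=0$, and write $R:=[0,\rho\lambda]\times[0,\lambda]$. I would cover the bottom corridor $[0,\rho\lambda]\times[0,\lambda_n]\subset R$ by horizontal rectangles $H_0,\dots,H_m$, each a translate of $[0,\rho\lambda_n]\times[0,\lambda_n]$, with $H_0$ having left side on $\{0\}\times[0,\lambda_n]$, $H_m$ having right side on $\{\rho\lambda\}\times[0,\lambda_n]$, and consecutive rectangles overlapping in a region of width at least $\lambda_n$ (possible with a shift $s=(\rho-1)\lambda_n/2>0$, since $\rho>1$); because $\lambda\le K\lambda_n$, the number $m$ of rectangles needed is bounded in terms of $\rho$ and $K$ only. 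Inside the overlap of $H_{j-1}$ and $H_j$ place a $\lambda_n\times\lambda_n$ square $T_j$. Applying $H(\rho,\ell,(\lambda_n)_n)$ to each $H_j$ and $V(1,\ell,(\lambda_n)_n)$ to each $T_j$, a union bound over these $2m+1=O_{\rho,K}(1)$ events shows that, with probability at least $1-(2m+1)Ce^{-c\lambda_n}$, the set $\mathcal{E}_\ell(g)$ contains a horizontal crossing of every $H_j$ and a vertical crossing of every $T_j$. On this event the planar gluing argument of Lemma~\ref{lemma:grow_rectangle} applies: $T_j$ is a full-height slab of both $H_{j-1}$ and $H_j$, so each of these horizontal crossings, restricted to $T_j$, crosses $T_j$ horizontally and hence meets the vertical crossing of $T_j$; chaining the crossings of $H_0,T_1,H_1,T_2,\dots,H_m$ produces a connected subset of $\mathcal{E}_\ell(g)\cap R$ joining the left and right sides of $R$, i.e. $g+\ell\in\cross^h(R)$.

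Finally, since $\lambda_n\ge\lambda/K$ we have $e^{-c\lambda_n}\le e^{-(c/K)\lambda}$, so absorbing $2m+1$ into the constant and replacing $c$ by $c/K$ gives a bound of the form $1-C'e^{-c'\lambda}$, uniform in $x$ and valid for all $\lambda\ge\lambda_0$; enlarging $C'$ to also cover $1\le\lambda<\lambda_0$ as above yields $H(\rho,\ell)$. The one delicate point is the topological gluing in the third paragraph — that on the intersection of the crossing events the pieces really link up into a single crossing of $R$, and that a connected crossing of $H_j$ restricted to the slab $T_j$ still crosses $T_j$ — but this is a standard fact for excursion sets of continuous planar fields (see \cite{BG17}, \cite{MV20}) and is exactly the mechanism already used in Lemma~\ref{lemma:grow_rectangle}; the rest is bookkeeping over a bounded number of scales with uniform exponential bounds.
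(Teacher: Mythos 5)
Your proof is correct, and the core mechanism is the same as the paper's: chaining $O_{\rho,K}(1)$ horizontal and vertical crossings at the sequence scale $\lambda_n$ to cover a rectangle at an intermediate scale $\lambda\in[\lambda_n,\lambda_{n+1})$. The organizational difference is that the paper first iterates Lemma~\ref{lemma:grow_rectangle} to upgrade $H(\rho,\ell,(\lambda_n)_n)$ to $H(\rho C_0,\ell,(\lambda_n)_n)$ with $C_0=\sup_n\lambda_{n+1}/\lambda_n$, and then finishes with pure monotonicity (a horizontal crossing of the wider, shorter rectangle $x+[0,\rho C_0\lambda_n]\times[0,\lambda_n]$ implies one of $x+[0,\rho\lambda]\times[0,\lambda]$), whereas you re-run the gluing chain by hand inside the bottom corridor $[0,\rho\lambda]\times[0,\lambda_n]$; this makes you re-prove part of what Lemma~\ref{lemma:grow_rectangle} already encapsulates, but it produces the same uniform exponential bound and is equally rigorous.
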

\begin{proof}
Let $C_0:= \sup_{n\in \N}\frac{\lambda_{n+1}}{\lambda_n} >0.$
We write $\rho=1+\varepsilon$ where $\varepsilon>0.$ By applying repeatedly Lemma \ref{lemma:grow_rectangle}, we see that $H(\rho C_0,\ell, (\lambda_n)_n)$ is verified. Let $\lambda\geq \lambda_0$. Let $n\in \N$ be such that $\lambda_n\leq \lambda< \lambda_{n+1}.$
Then we have $\rho\lambda \leq \rho\lambda_{n+1}\leq \rho C_0\lambda_n$ and $\lambda\geq \lambda_n$. This show that for $x\in \R^2$, $$g+\ell \in \cross^h(x+[0,\rho C_0\lambda_n]\times [0,\lambda_n]) \Rightarrow g+\ell \in \cross^h(x+[0,\rho\lambda]\times [0,\lambda]).$$
This concludes the proof of the lemma since
\begin{align*}
    &\Proba{g+\ell \in \cross^h(x+[0,\rho\lambda]\times [0,\lambda])} \\
    \geq & \Proba{g+\ell \in \cross^h(x+[0,\rho C_0\lambda_n]\times [0,\lambda_n])} \\
    \geq & 1-Ce^{-c\lambda_n} \\
    \geq & 1-Ce^{-c\lambda/C_0}
\end{align*}
where $c,C$ are constants that do not depend on $n$ (and therefore on $\lambda$) or $x$.
\end{proof}
Another consequence of gluing constructions is as follows.
\begin{lemma}
\label{lemma:cross_imply_giant}
Let $\ell\in \R$. If $H(\rho,\ell)$ and $V(\rho,\ell)$ hold for some $\rho>1$ then almost surely the set $\mathcal{E}_\ell(g)=\{g+\ell\geq 0\}$ contains a unique unbounded connected component and its complementary $\{g+\ell<0\}$ does not contain any unbounded connected component.
\end{lemma}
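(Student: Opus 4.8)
The plan is to prove Lemma~\ref{lemma:cross_imply_giant} by a standard Aizenman--Kesten--Newman / Burton--Keane style argument, using only the crossing statements $H(\rho,\ell)$ and $V(\rho,\ell)$ as black boxes. First I would fix $\rho>1$ and set up an annulus structure: for $R\geq 1$ let $\mathrm{Ann}(R):=[-3R,3R]^2\setminus (-R,R)^2$ and observe that the event that $\mathcal{E}_\ell(g)$ contains a circuit in $\mathrm{Ann}(R)$ surrounding the origin is implied by the simultaneous occurrence of four crossing events of rectangles of aspect ratio $3$ (two horizontal copies of $[-3R,3R]\times[R,3R]$ and its reflection, two vertical copies of $[R,3R]\times[-3R,3R]$ and its reflection), glued at their overlaps in the four corner squares. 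Since $H(3,\ell)$ and $V(3,\ell)$ follow from $H(\rho,\ell)$, $V(\rho,\ell)$ together with Lemma~\ref{lemma:grow_rectangle} (repeatedly enlarging the aspect ratio), a union bound gives that the circuit event in $\mathrm{Ann}(R)$ has probability at least $1-Ce^{-cR}$ for constants not depending on $R$ or on the center of the annulus.

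Next I would run this along a geometric sequence of scales: let $R_k=2^k$ and consider the annuli $\mathrm{Ann}(R_k)$ centered at the origin. By Borel--Cantelli (using $\sum_k Ce^{-cR_k}<\infty$), almost surely all but finitely many of these annuli contain a surrounding circuit in $\mathcal{E}_\ell(g)$. Any two consecutive such circuits intersect (the outer radius of $\mathrm{Ann}(R_k)$ exceeds the inner radius of $\mathrm{Ann}(R_{k+1})$ once the nested annuli overlap, which holds for our choice $R_{k+1}=2R_k$ since $3R_k>R_{k+1}$), so they merge into a single connected subset of $\mathcal{E}_\ell(g)$; taking the union over all large $k$ produces an unbounded connected component. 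This gives existence. For uniqueness, I would argue that any two unbounded components of $\mathcal{E}_\ell(g)$ must each intersect the innermost annulus for which a surrounding circuit exists, hence both are connected to that circuit, hence they coincide; more carefully, on the almost-sure event that infinitely many $\mathrm{Ann}(R_k)$ carry a circuit, every unbounded component meets arbitrarily large annuli and therefore meets one carrying a circuit, and all such components are glued to that single circuit.

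For the dual statement — no unbounded component in $\{g+\ell<0\}$ — I would use the same circuits: a circuit of $\mathcal{E}_\ell(g)=\{g+\ell\geq 0\}$ surrounding the origin in $\mathrm{Ann}(R_k)$ topologically separates the bounded region it encloses from infinity, so any connected subset of $\{g+\ell<0\}$ meeting both the inside and the outside of that circuit would have to cross it, which is impossible. Since almost surely such circuits exist at arbitrarily large scales, every connected component of $\{g+\ell<0\}$ is trapped inside some circuit and hence is bounded. One mild technical point to address here is that $\mathcal{E}_\ell(g)$ is a closed set while $\{g+\ell<0\}$ is open, so ``crossing'' a circuit is genuinely obstructed by a continuity/connectedness argument rather than a strict planar-duality statement; I would phrase this using the Jordan curve theorem applied to a continuous circuit extracted from the (relatively open in the rectangle) crossing clusters, noting that the crossing clusters of $\mathcal{E}_\ell(g)$ are path-connected closed sets and a concatenation of four of them yields a closed connected set separating the plane.

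The main obstacle I expect is purely topological bookkeeping: turning the four overlapping rectangle-crossing \emph{clusters} into a genuine separating loop, and then invoking a clean separation statement valid for closed (not necessarily polygonal) connected sets in $\R^2$. This is the kind of step that is routinely waved through in the literature (e.g.\ \cite{BG17}, \cite{MV20}), and I would either cite those references for the precise separation lemma or give a short self-contained argument: a closed connected set $K\subset\mathrm{Ann}(R)$ that, within each of the four boundary rectangles, joins the two ``long'' sides, must have connected complement-in-$\R^2$ with exactly one unbounded component, and the bounded complementary region contains $(-R,R)^2$; hence $K$ separates $(-R,R)^2$ from $\infty$. Everything else — the gluing of rectangles, the geometric-scale Borel--Cantelli, the merging of nested circuits — is a direct and standard consequence of Lemma~\ref{lemma:grow_rectangle} and the hypotheses $H(\rho,\ell)$, $V(\rho,\ell)$.
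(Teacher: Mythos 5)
Your proposal is correct and follows essentially the same skeleton as the paper's argument: annular circuits at geometrically growing scales plus Borel--Cantelli for uniqueness and for killing unbounded components of $\{g+\ell<0\}$, then a gluing construction for existence. The one place you diverge is the existence step. The paper introduces a second family of events $\mathcal{D}_n$, nested horizontal and vertical crossings of rectangles whose sizes grow geometrically (a ``staircase'' escaping to infinity), and applies Borel--Cantelli to those; you instead reuse the annular circuits and chain consecutive ones. Your chaining does work, but not for the reason you give: two surrounding circuits lying in overlapping annuli need not intersect (the inner one could hug $(-R_k,R_k)^2$ and the outer one hug $\partial[-6R_k,6R_k]^2$), so ``$3R_k>R_{k+1}$'' is not sufficient on its own. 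What actually forces the intersection is the specific shape of your circuits: the right vertical crossing cluster at scale $k$ (a connected set in $[R_k,3R_k]\times[-3R_k,3R_k]$ joining top to bottom) and the top horizontal crossing cluster at scale $k+1$ (a connected set in $[-6R_k,6R_k]\times[2R_k,6R_k]$ joining left to right) both traverse the overlap rectangle $[R_k,3R_k]\times[2R_k,3R_k]$, as a vertical and as a horizontal crossing of it respectively, and any vertical and horizontal crossing of a common rectangle must meet. With that correction your existence step is sound, and the remaining topological bookkeeping you flag (turning four crossing clusters into a separating set, and the separation argument for the closed set $\mathcal{E}_\ell(g)$) is exactly what the paper also leaves implicit. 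The paper's staircase construction $\mathcal{D}_n$ is arguably a hair cleaner, since the nested rectangles share a common corner at the origin and the overlap is immediate, but both routes reach the same conclusion with comparable effort.
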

\begin{proof}
We provide two arguments. One proves the uniqueness of the eventual unbounded connected component of $\{g+\ell\geq 0\}$ and also proves the non existence of an unbounded connected component in $\{g+\ell<0\}.$ The second argument proves the almost sure existence of an unbounded connected component in $\{g+\ell\geq 0\}.$
For the first argument, for $n\in \N^*$ we consider the following rectangles
\begin{align*}
    \mathcal{R}_n^v := [-2^{n+1},-2^n]\times [-2^{n+1},2^{n+1}] \\
    \mathcal{R}_n^{v'} := [2^{n},2^{n+1}]\times [-2^{n+1},2^{n+1}] \\
    \mathcal{R}_n^h := [-2^{n+1},2^{n+1}]\times [-2^{n+1},-2^{n}] \\
    \mathcal{R}_n^{h'} := [-2^{n+1},2^{n+1}]\times [2^{n},2^{n+1}] \\
\end{align*}
Denote by $\mathcal{C}_n$ the following event:
\begin{equation}
    \mathcal{C}_n := \{g+\ell'\in \cross_v(\mathcal{R}_n^{v'})\cap \cross_v(\mathcal{R}_n^v)\cap\cross_h(\mathcal{R}_n^{h'})\cap\cross_h(\mathcal{R}_n^h) \}.
\end{equation}
By Lemma \ref{lemma:grow_rectangle} it appears that $H(4,q)$ and $V(4,q)$ hold. Therefore we may find constants $C,c>0$ (independent from $n$) such that by an union bound
$$\Proba{\mathcal{C}_n}\geq 1-Ce^{-c2^n}.$$
Therefore, by Borel-Cantelli Lemma, almost surely, there are infinitely many $n$ such that $\mathcal{C}_n$ occur. On this event, there cannot exists an unbounded connected component in $\{g+\ell<0\}$, therefore there exists at most one unbounded connected component in $\{g+\ell\geq 0\}.$
We now prove the second argument of this unbounded connected component.
For $n\in \N^*$ we denote by $\mathcal{D}_n$ the following event
\begin{equation}
    \mathcal{D}_n := \{g+\ell\in \cross^h([0,2^{2n+1}]\times [0,2^{2n}])\cap \cross^v([0,2^{2n+1}]\times [0,{2^{2n+2}}])\}.
\end{equation}
Observe that by an union bound we have
$$\Proba{\mathcal{D}_n}\geq 1-Ce^{-c2^{2n}}.$$
By Borel-Cantelli Lemma, almost surely, there exists a random $n_0\geq 1$ such that for $n\geq n_0$ then the event $\mathcal{D}_n$ occurs. This implies the existence of an unbounded connected component in $\mathcal{E}_\ell(g)=\{g+\ell\geq 0\}$ (see Figure \ref{fig:giant}).
\end{proof}
\begin{figure}[h]
    \centering
    \includegraphics[width=0.8\textwidth]{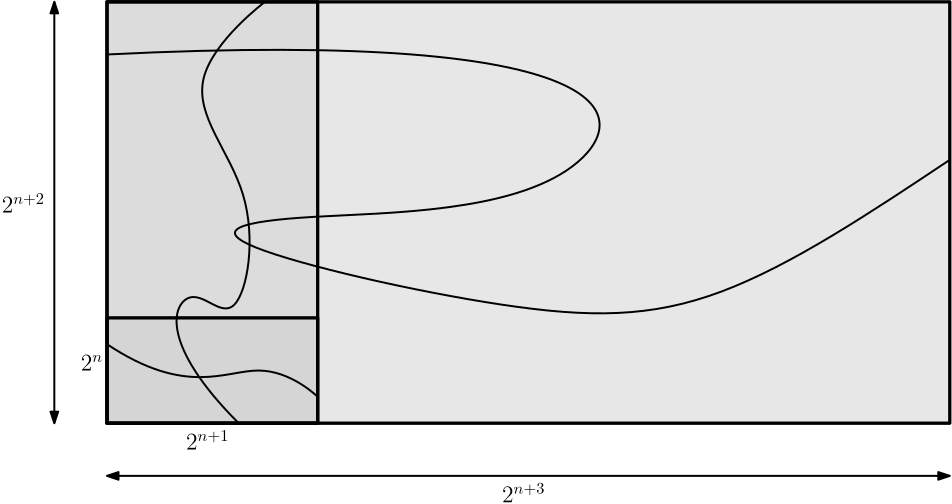}
    \caption{Illustration of the gluing construction in Lemma \ref{lemma:cross_imply_giant}.}
    \label{fig:giant}
\end{figure}

Thanks to this gluing theory, it appears that Theorem \ref{thm:crossings} is a consequence of the following proposition.
\begin{proposition}
\label{prop:crossings}
Let $g : \R^2 \to \R$ be a random function that admits $(g_R)_{R\in \N^*}$ a $(a,b)$-sequence of finite range approximations with $a>0$ and $b>1$ (see Definition \ref{def:approximations}). For $\rho>1$ we denote by $\mathcal{R}^h_\rho := [0,\rho]\times [0,1]$ and $\mathcal{R}^v_\rho = [0,\rho]\times [0,q]$.
Let $\ell\in \R$. Assume that there exists $\rho_0>1$ such that the following holds
\begin{align}
    \label{eq:hypothese_crossing}
    \inf_{x\in \R^2}\Proba{g+\ell\in \cross^h(x+\lambda\mathcal{R}^h_{\rho_0})}\xrightarrow[\lambda\to \infty]{}1. \\
    \label{eq:hypothese_crossing2}
    \inf_{x\in \R^2}\Proba{g+\ell\in \cross^v(x+\lambda\mathcal{R}^v_{\rho_0})}\xrightarrow[\lambda\to \infty]{}1.
\end{align}
Then, for any $\ell'>\ell$ and for any $\rho>0$ the two propositions $H(\rho,\ell')$ and $V(\rho,\ell')$ are verified.
\end{proposition}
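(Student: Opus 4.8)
The plan is to fix a level $\ell'>\ell$, split the sprinkling budget as $\ell<\ell_1<\ell_2<\ell'$, and argue in three steps: (i) transfer the hypothesis from $g$ to a finite range approximation $g_R$, the error being absorbed by the budget; (ii) run a Peierls/renormalisation argument on a mesoscopic lattice to upgrade the qualitative limits \eqref{eq:hypothese_crossing}--\eqref{eq:hypothese_crossing2} into quantitative crossing bounds; (iii) feed these into the gluing Lemmas \ref{lemma:grow_rectangle} and \ref{lemma:smooth_rectangle} to reach every aspect ratio and the uniform statements $H(\rho,\ell')$, $V(\rho,\ell')$.

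For step (i), set $\delta:=\min(\ell_2-\ell_1,\ell'-\ell_2)>0$ and, for a scale $\lambda\ge1$, pick a decorrelation length $R=R(\lambda)$ which tends to infinity, is $o(\lambda)$, and satisfies $R(\lambda)^b\gg\log\lambda$ (possible since $b>1$; e.g.\ $R(\lambda)=\lceil\lambda^{1/2}\rceil$). Using the defining estimate of an $(a,b)$-sequence of finite range approximations together with a union bound over the $O(\lambda^2)$ unit squares meeting a $\rho_0\lambda$-sized rectangle,
\[
  \sup_{x}\Proba{\sup_{x+\lambda\mathcal{R}^h_{\rho_0}}|g-g_{R(\lambda)}|>\delta}\ \le\ C\lambda^2 e^{-c\delta^a R(\lambda)^b},
\]
which is small and summable along $\lambda\in\{2^n\}_{n}$. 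On the complement one has $\mathcal{E}_{\ell_1}(g)\subseteq\mathcal{E}_{\ell_2}(g_{R(\lambda)})\subseteq\mathcal{E}_{\ell'}(g)$ inside the rectangle, so a horizontal crossing of $g+\ell_1$ produces one of $g_{R(\lambda)}+\ell_2$, and a horizontal crossing of $g_{R(\lambda)}+\ell_2$ produces one of $g+\ell'$ (and likewise vertically). Combined with the hypothesis, this yields $\inf_{x}\Proba{g_{R(\lambda)}+\ell_2\in\cross^h(x+\lambda\mathcal{R}^h_{\rho_0})}\to1$, and its vertical analogue.

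For step (ii), fix a small $\varepsilon_0>0$, its admissible size being dictated by the Peierls estimate below and depending only on $d=2$. Applying step (i) at a large scale $\lambda_0$ and writing $R_0:=R(\lambda_0)\ll\lambda_0$, the field $g_{R_0}+\ell_2$ crosses any translate of $\lambda_0\mathcal{R}^h_{\rho_0}$ and of $\lambda_0\mathcal{R}^v_{\rho_0}$ with probability $\ge1-\varepsilon_0$, and, $g_{R_0}$ being $R_0$-dependent, these events have finite range in units of $\lambda_0$. Call $u\in\lambda_0\Z^2$ \emph{good} when $g_{R_0}+\ell_2$ makes prescribed horizontal and vertical crossings of a fixed $O(\lambda_0)$-neighbourhood of $u$, chosen so that the crossings of two good neighbours glue (cf.\ Figure \ref{fig:grow}); good sites have probability $\ge1-C\varepsilon_0$ and form a finite range process on $\lambda_0\Z^2$. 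For $\varepsilon_0$ small this dominates a highly supercritical Bernoulli site percolation, and a Peierls bound — a $\ast$-connected blocking path of $k$ non-good sites costs at most $(C\varepsilon_0)^{k/c_0}$ once it is split into $c_0=O(1)$ independent sub-families — shows that an $m_1\times m_2$ box of $\lambda_0\Z^2$ with $m_1\le\rho_0 m_2$ is crossed by good sites in the $m_1$-direction with probability $\ge1-Ce^{-cm_2}$. Gluing the crossings carried by the good sites turns this into a crossing of $\mathcal{E}_{\ell_2}(g_{R_0})$ of the corresponding Euclidean rectangle, and transferring back to $g+\ell'$ via step (i) gives $H(\rho_0,\ell',(\lambda_n)_n)$ and $V(\rho_0,\ell',(\lambda_n)_n)$ along $\lambda_n=2^n$, with a quantitative rate (after optimising the slowly growing parameters one can take it of the form $1-Ce^{-c\lambda_n^{1/2}}$, which is all that is used).

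Step (iii) is then routine: Lemma \ref{lemma:smooth_rectangle} promotes the sequence-indexed statements to the uniform $H(\rho_0,\ell')$ and $V(\rho_0,\ell')$; the first item of Lemma \ref{lemma:grow_rectangle} gives $H(\rho,\ell')$ and $V(\rho,\ell')$ for every $\rho\in(0,\rho_0]$, in particular for $\rho=1$ and for $\rho=1+\eta$ with $\eta$ small; and iterating its second item, alternating the roles of $H$ and $V$, propagates this to all $\rho>0$, which is the conclusion. The delicate point is the tension in step (ii) between the two demands on $R$: it must be large enough that $g_R$ approximates $g$ uniformly over a $\lambda$-sized rectangle — which forces $R$, hence the renormalisation block scale, to grow with $\lambda$ — yet small compared with that block scale so that ``good block'' events retain finite range and the Peierls estimate applies. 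Balancing these growths and bookkeeping the resulting decay rate is the heart of the argument; everything else is a union bound or a classical gluing construction, in the spirit of the renormalisation of Section \ref{sec:renorm}.
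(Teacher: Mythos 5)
Your approach is genuinely different from the paper's, and it has a real gap in step (ii)/(iii) that prevents it from delivering the statement as written.

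\textbf{Where the approaches diverge.} The paper proves the proposition by a multi-scale bootstrap: it fixes a slowly growing sequence of levels $\ell_n\uparrow\ell'$ and a geometric sequence of scales with $\lambda_{n+1}=2\lambda_n+\lambda_n^{\delta}$, writes the crossing-failure probability $p_n$ at level $\ell_n$ and scale $\lambda_n$, and shows via a gluing of seven rectangles plus Lemma \ref{lemma:decor} (with decorrelation radius $\lambda_n^{\delta}$ and sprinkling increment $\varepsilon_n\asymp\lambda_n^{-\gamma}$, tuned so that $b\delta-a\gamma>1$) that $p_{n+1}\le 49p_n^2+Ce^{-c\lambda_n}$. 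The quadratic self-improvement plus the a priori smallness of $p_0$ force $p_n\le Ce^{-c\lambda_n}$, i.e.\ genuinely exponential decay, which is precisely the definition of $H(2,\ell',(\lambda_n))$ and $V(2,\ell',(\lambda_n))$; Lemmas \ref{lemma:grow_rectangle} and \ref{lemma:smooth_rectangle} then close the argument. You instead propose a single coarse-graining at one large block scale $\lambda_0$ with one decorrelation radius $R_0=R(\lambda_0)$, dominate good blocks by a supercritical Bernoulli process, and run a Peierls bound. That is a standard and perfectly reasonable alternative \emph{provided} the rate it delivers is strong enough.

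\textbf{The gap.} Your Peierls estimate gives $g_{R_0}+\ell_2$ crossings of boxes of Euclidean side $\lambda$ with failure probability $\le Ce^{-c\lambda/\lambda_0}$. To conclude about $g+\ell'$ you must control $\sup_{\text{box}}|g-g_{R_0}|$, and the union bound over the $\lambda$-sized box gives error $\sim\lambda^2e^{-c\delta^a R_0^b}$. For \emph{fixed} $\lambda_0,R_0$ this is polynomially \emph{diverging} in $\lambda$; the transfer simply fails at large scales. If you let $R_0$ grow with $\lambda$ (as you suggest in the final paragraph), you must also let the block scale $\lambda_0$ grow, because the Peierls argument needs $R_0\ll\lambda_0$. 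But then the Peierls rate $e^{-c\lambda/\lambda_0}$ is only stretched-exponential: if $R_0\sim\lambda^{1/b'}$ (needed so that $\lambda^2e^{-cR_0^b}$ decays exponentially in some power of $\lambda$) and $\lambda_0\gg R_0$, then $\lambda/\lambda_0\lesssim\lambda^{1-1/b'}$ with $b'<b$, so the exponent is strictly less than $1$. You acknowledge this by writing ``one can take it of the form $1-Ce^{-c\lambda_n^{1/2}}$, which is all that is used,'' but that is not true: $H(\rho,\ell')$ and $V(\rho,\ell')$ as defined demand decay $Ce^{-c\lambda}$, and the stretched-exponential rate does not give them. (It does give enough for the final application via Borel--Cantelli, so the overall theorem would survive if the definitions were weakened, but the proposition as stated is not established.) The paper's multi-scale scheme is precisely what removes this obstruction: by incrementing the level and the decorrelation radius \emph{at every scale}, the gluing construction gives a recursion whose quadratic term drives the failure probability all the way to $e^{-c\lambda_n}$, which a single-shot Peierls cannot match.

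Concretely: to repair your argument you would either (a) iterate the renormalisation — i.e.\ treat each scale as the seed for a fresh, larger renormalisation, with its own sprinkling budget drawn from a convergent series and its own growing $R$, which reconstructs the paper's bootstrap — or (b) replace $H(\rho,\ell')$ and $V(\rho,\ell')$ in the statement by their stretched-exponential variants and check that the downstream gluing lemmas still hold.
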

Proposition \ref{prop:crossings} is sometimes referred to as a \textit{bootstrap} result. Indeed, we assume that the probability of crossing some rectangles goes to $1$ and, by paying a little sprinkling from $\ell$ to $\ell'$, we quantify the speed of this convergence by showing that it is exponentially fast.
We show how to deduce Theorem \ref{thm:crossings} from Proposition \ref{prop:crossings}.
\begin{proof}[Proof of Theorem \ref{thm:crossings}]
By Proposition \ref{prop:crossings} we see that for $\rho>1$ and $\ell'<\ell$ then both $H(\rho,\ell')$ and $V(\rho,\ell')$ hold. Therefore the conclusion of Theorem \ref{thm:crossings} directly follows from Lemma \ref{lemma:cross_imply_giant}.
\end{proof}

In order to prove Proposition \ref{prop:crossings} we introduce the notion of increasing and decreasing events.
\begin{definition}
Let $S\subset \R^2$ be a subset. Denote by $\mathcal{F}(S,\R)$ the set of functions from $S$ to $\R$. A subset $A\subset \mathcal{F}(S,\R)$ is said to be \textit{increasing} if
$$\forall g\in A,\ \forall h\in \mathcal{F},\ g\leq h\Rightarrow h\in A,$$
where $g\leq h$ means $g(x)\leq h(x)$ for all $x\in S$.
A subset $A\subset \mathcal{F}(S,\R)$ is said to be an increasing and supported on $S'\subset S$ if there exists $A'\subset \mathcal{F}(S',\R)$ increasing such that
$$A=\{g : S\to \R\ |\ g_{|S'}\in A'\},$$
where $g_{|S'}$ denotes the restriction of the function $g$ to $S'$.
The notions of decreasing events, and decreasing events supported on $S'$ are defined similarly.
\end{definition}
We will use the following technical lemma.
\begin{lemma}
\label{lemma:decor}
Let $g : \R^2 \to \R$ be a random function that admits $(g_R)_{R\in \N^*}$ a $(a,b)$-sequence of finite range approximations with $a>0$ and $b>0$ (see Definition \ref{def:approximations}). Let $S_1,S_2\subset \R^2$ such that $R := d(S_1,S_2)\geq 1$. Let $A$ be a decreasing event supported on $S_1$ and $B$ be a decreasing event supported on $S_2$. Then, for any $\ell\in \R$ and any $\varepsilon>0$
\begin{align}
    \Proba{g+\ell \in A\cap B}\leq &\Proba{g+\ell-2\varepsilon \in A}\Proba{g+\ell-2\varepsilon\in B}\nonumber\\
    &+4CNe^{-c\varepsilon^a R^b},
\end{align}
where $N$ is the minimal number of boxes of the form $x+[0,1]^2$ needed to cover $S_1\cup S_2$ and $C,c>0$ are constants that only depend on $g$.
\end{lemma}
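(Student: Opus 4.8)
The plan is a standard decorrelation-via-sprinkling estimate built on the finite range approximation $g_R$. First I would reduce to the case of integer $R$: set $R':=\lfloor R\rfloor\in\N^*$; since $R\geq 1$ we have $R'\geq R/2$, so any bound of the form $C'e^{-c'\varepsilon^a(R')^b}$ is also of the form $Ce^{-c\varepsilon^aR^b}$ after enlarging the constants (which depend only on $g$), and I work with $g_{R'}$. Cover $S_1\cup S_2$ by $N$ translates $Q_1,\dots,Q_N$ of $[0,1]^2$ (if $N=\infty$ the bound is vacuous), and introduce
\[
\mathcal{G}:=\left\{\sup_{x\in S_1\cup S_2}|g(x)-g_{R'}(x)|\leq\varepsilon\right\},\qquad \mathcal{G}_i:=\left\{\sup_{x\in S_i}|g(x)-g_{R'}(x)|\leq\varepsilon\right\}\ \ (i=1,2),
\]
so that $\mathcal{G}=\mathcal{G}_1\cap\mathcal{G}_2$. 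By the second property in Definition \ref{def:approximations} and a union bound over the $Q_j$ (using the same cover for $S_1$, $S_2$ and $S_1\cup S_2$), each of $\Proba{\mathcal{G}_1^c}$, $\Proba{\mathcal{G}_2^c}$, $\Proba{\mathcal{G}^c}$ is at most $NCe^{-c\varepsilon^a(R')^b}$.

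Next I would run two monotone comparisons, each using that $A$ (resp.\ $B$) is decreasing and supported on $S_1$ (resp.\ $S_2$), so that membership depends only on the values on $S_1$ (resp.\ $S_2$) and is preserved when the function is lowered pointwise there. On $\mathcal{G}$ one has $g_{R'}\leq g+\varepsilon$ on $S_1$, hence $g_{R'}+\ell-\varepsilon\leq g+\ell$ on $S_1$, so $\{g+\ell\in A\}\cap\mathcal{G}\subset\{g_{R'}+\ell-\varepsilon\in A\}$, and similarly for $B$ on $S_2$; therefore
\[
\{g+\ell\in A\cap B\}\cap\mathcal{G}\ \subset\ \{g_{R'}+\ell-\varepsilon\in A\}\cap\{g_{R'}+\ell-\varepsilon\in B\}.
\]
In the reverse direction, on $\mathcal{G}_1$ one has $g-\varepsilon\leq g_{R'}$ on $S_1$, hence $g+\ell-2\varepsilon\leq g_{R'}+\ell-\varepsilon$ on $S_1$, so $\{g_{R'}+\ell-\varepsilon\in A\}\cap\mathcal{G}_1\subset\{g+\ell-2\varepsilon\in A\}$, giving $\Proba{g_{R'}+\ell-\varepsilon\in A}\leq\Proba{g+\ell-2\varepsilon\in A}+\Proba{\mathcal{G}_1^c}$, and likewise for $B$ with $\mathcal{G}_2$.

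Finally I would combine these. The event $\{g_{R'}+\ell-\varepsilon\in A\}$ is measurable with respect to $(g_{R'}(x))_{x\in S_1}$ and $\{g_{R'}+\ell-\varepsilon\in B\}$ with respect to $(g_{R'}(x))_{x\in S_2}$; since $d(S_1,S_2)=R\geq R'$, the first property in Definition \ref{def:approximations} makes these two events independent. Writing $p:=\Proba{g+\ell-2\varepsilon\in A}$, $q:=\Proba{g+\ell-2\varepsilon\in B}$ and $e_i:=\Proba{\mathcal{G}_i^c}$, the inclusion above together with independence gives
\[
\Proba{g+\ell\in A\cap B}\ \leq\ \Proba{g_{R'}+\ell-\varepsilon\in A}\,\Proba{g_{R'}+\ell-\varepsilon\in B}+\Proba{\mathcal{G}^c}\ \leq\ (p+e_1)(q+e_2)+\Proba{\mathcal{G}^c}.
\]
Using $p,q,e_i\leq1$ one has $(p+e_1)(q+e_2)\leq pq+e_1+e_2+e_1e_2\leq pq+3\max(e_1,e_2)$, and bounding $\max(e_1,e_2)$ and $\Proba{\mathcal{G}^c}$ each by $NCe^{-c\varepsilon^a(R')^b}$ yields the claimed estimate (with the constant $4$ in front of $NC$, and after absorbing $(R')^b$ into a constant multiple of $R^b$ as explained).

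I do not expect a genuine obstacle: this is a routine sprinkling argument. The only points requiring care are the direction of the sprinkling (a decreasing event becomes \emph{more} likely when the field is pushed down, which forces the shift by $-2\varepsilon$ in the conclusion), the passage from $R$ to $\lfloor R\rfloor$ to match the integer indexing of the approximations, and keeping the error bookkeeping tight enough to land on the stated constant $4$ rather than a larger absolute constant.
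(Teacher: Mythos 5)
Your proof is correct and follows essentially the same sprinkling-plus-decorrelation strategy as the paper: push $g$ down to $g_{R}$ at cost $\varepsilon$, use finite range to factor the probability, then push back up at another cost $\varepsilon$, paying an error of $CNe^{-c\varepsilon^aR^b}$ at each step. The only difference is that you handle the integer-indexing of the approximations explicitly via $\lfloor R\rfloor$ (a point the paper glosses over) and bound the error events over $S_1$ and $S_2$ separately rather than over $S_1\cup S_2$ in the reverse-sprinkling step, but the decomposition and bookkeeping are otherwise identical.
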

\begin{proof}
We observe that there are two cases.
Either $\sup_{S_1\cup S_2}|g-g_R|>\varepsilon$, this is the "bad" case and the probability of such event is upper bounded by $CNe^{-c\varepsilon^a R^b}$ by Definition \ref{def:approximations}. Otherwise, $\sup_{S_1\cup S_2}|g-g_R|\leq \varepsilon$. Since $A\cap B$ is a decreasing event supported on $S_1\cup S_2$, and since we have $g_R\leq g+\varepsilon$ on $S_1\cup S_2$ we have $g+\ell \in A\cap B\Rightarrow g_R+\ell-\varepsilon\in A\cap B.$
We can then use the independence of the events $g_R+\ell-\varepsilon\in A$ and $g_R+\ell-\varepsilon\in B$ which is guaranteed by Definition \ref{def:approximations}. More formally we have

\begin{align*}
    \Proba{g+\ell \in A\cap B} \leq & \Proba{g+\ell\in A\cap B, \sup_{S_1\cup S_2}|g-g_R|>\varepsilon} \\
    & + \Proba{g+\ell \in A\cap B, \sup_{S_1\cup S_2}|g-g_R|\leq \varepsilon} \\
    \leq & \Proba{\sup_{S_1\cup S_2}|g-g_R|>\varepsilon} \\
    &  + \Proba{g_R+\ell-\varepsilon \in A\cap B} \\
    \leq & \Proba{\sup_{S_1\cup S_2}|g-g_R|>\varepsilon} \\
    &  + \Proba{g_R+\ell-\varepsilon \in A}\Proba{g+R+\ell-\varepsilon\in B} \\
\end{align*}
By the same argument we get
\begin{align*}
    \Proba{g_R+\ell-\varepsilon \in A} \leq & \Proba{\sup_{S_1\cup S_2}|g-g_R|>\varepsilon} \\
    &  + \Proba{g+\ell-2\varepsilon \in A} \\
    \Proba{g_R+\ell-\varepsilon \in B} \leq & \Proba{\sup_{S_1\cup S_2}|g-g_R|>\varepsilon} \\
    &  + \Proba{g+\ell-2\varepsilon \in B} \\
\end{align*}
Putting these inequalities together yields the conclusion.
\end{proof}
We now provide the proof of Proposition \ref{prop:crossings} which is inspired by the ideas from \cite{MV20}.
\begin{proof}[Proof of Proposition \ref{prop:crossings}]
Recall that we denote by $a>0$ and $b>1$ are the two exponents from Definition \ref{def:approximations}.
Since $b>1$ we may choose $\delta\in ]0,1[$ such that
\begin{equation}
    b\delta>1.
\end{equation}
We also may take $\gamma>0$ small enough such that
\begin{equation}
    \label{eq:betadelta_alphagamma}
    b\delta-a\gamma>1.
\end{equation}
Take $\lambda_0>1$ (to be fixed later). For $n\in \N^*$ we define $\lambda_n$ by induction,
\begin{equation}
    \label{eq:rec_lambda}
    \forall n\in \N^*,\ \lambda_n := 2\lambda_{n-1}+\lambda_{n-1}^\delta.
\end{equation}
We claim that there exists a constant $C_{\delta, \lambda_0}>0$ such that
\begin{equation}
    \label{eq:order_lambda}
    \lambda_n \sim_{n\to \infty}C_{\delta,\lambda_0}2^n.
\end{equation}
In fact, let $c>2$ be a constant so that $c^\delta <2$ (this is possible since $\delta<1$). Let $n_0$ (depending on $\delta,c,\lambda_0$) be such that
$$\forall n\geq n_0,\ \lambda_{n+1}\leq c\lambda_n.$$
(this is possible because $\delta<1,$ $c>2$ and since $\lambda_n \to \infty$). This yields that $\lambda_n \leq \tilde{C}_{\delta,\lambda_0}c^n$ for all $n\in \N^*$ where $\tilde{C}_{\delta,\lambda_0}$ is a constant. Dividing \eqref{eq:rec_lambda} by $2^{n}$ we obtain that for $n\in \N$
$$\left|\frac{\lambda_{n+1}}{2^{n+1}} - \frac{\lambda_n}{2^n} \right|=\frac{\lambda_n^\delta}{2^{n+1}}\leq\tilde{C}^\delta_{\delta,\lambda_0} \left(\frac{c^\delta}{2}\right)^n$$
By cancellation (and since $\frac{c^\delta}{2}<1$) we obtain \eqref{eq:order_lambda}.
Let $\ell,\ell'\in \R$ such that $\ell<\ell'$. We introduce a sequence $(\ell_n)_n$ defined by
We define
\begin{equation}
    \ell_n := \ell'-\frac{\ell'-\ell}{\lambda_n^\gamma}.
\end{equation}
We observe that for any $n\in \N$,
\begin{equation}
    \ell < \ell_n < \ell_{n+1}< \ell'.
\end{equation}
We introduce for $n\in \N$ the quantity $\varepsilon_n$ defined by,
\begin{equation}
    \forall n\in \N,\ \varepsilon_n := \ell_{n+1}-\ell_n,
\end{equation}
and we observe that
\begin{equation}
    \forall n\in \N,\ \varepsilon_n \geq (\ell'-\ell)(1-2^{-\gamma})\lambda_n^{-\gamma}.
\end{equation}
For $n\in \N$ we write,
\begin{equation}
    p_n^h := \sup_{x\in \R^2} \Proba{g+\ell_n \not \in \cross^h(x+[0,2\lambda_n]\times [0,\lambda_n])},
\end{equation}
\begin{equation}
    p_n^v := \sup_{x\in \R^2} \Proba{g+\ell_n \not\in \cross^v(x+[0,\lambda_n]\times [0,2\lambda_n])}.
\end{equation}
We also denote by $p_n$ the following quantity,
\begin{equation}
    \forall n\in \N,\ p_n := \max(p_n^h,p_n^v).
\end{equation}
Consider the gluing construction shown in Figure \ref{fig:multicross}.
\begin{figure}[h]
    \centering
    \includegraphics[width=0.8\textwidth]{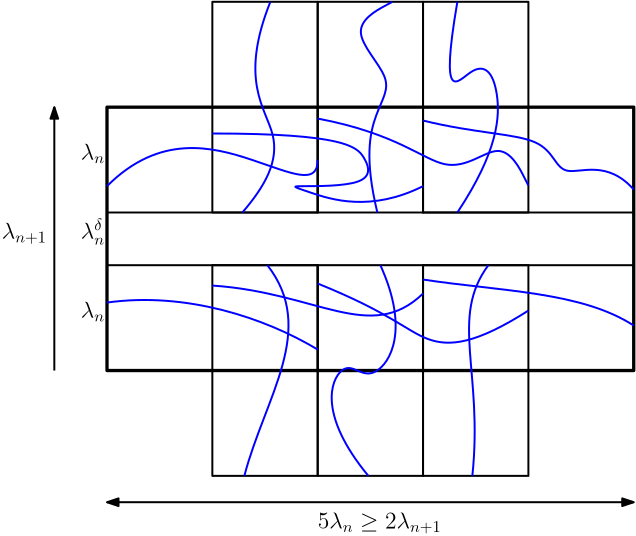}
    \caption{Illustration of the gluing construction in the proof of Proposition \ref{prop:crossings}.}
    \label{fig:multicross}.
\end{figure}
Since $5\lambda_n \geq 2\lambda_{n+1}$, it appears that the crossing of a rectangle of the form $[2\lambda_{n+1}]\times [0,\lambda_n]$ may be induced in two ways, one way is the occurrence of seven horizontal and vertical crossings of rectangles of shape $2\lambda_n \times \lambda_n$ or $\lambda_n\times 2\lambda_n$ located in the upper part of the rectangle $[2\lambda_{n+1}]\times [0,\lambda_n]$. The other way is also the occurrence of seven such crossings but located in the bottom part of the rectangle $[2\lambda_{n+1}, \lambda_n]$. Between the rectangles used by one way and the rectangles used by the other way, there is a distance of $\lambda_n^\delta.$
Therefore, one may apply Lemma \ref{lemma:decor}, together with \eqref{eq:betadelta_alphagamma} to see that for all $n\in \N$.
\begin{equation}
    p^h_{n+1}\leq 49p_n^2+C'_1\lambda_{n}^2e^{-c_1\lambda_n^{-a\gamma}\lambda_n^{\delta b}} \leq 49p_n^2 + C_1e^{-c_1\lambda_n}
\end{equation}
where $C_1,C'_1,c_1>0$ are constants that depend on $\lambda_0, a,b,\gamma,\delta,\ell,\ell'$. We may also assume without loss of generality that $C_1>1$.
We may do a similar construction of vertical crossings and altogether we get
\begin{equation}
    \forall n\in \N,\ p_{n+1}\leq 49p_n^2 + C_1e^{-c_1\lambda_n}.
\end{equation}
We introduce a sequence $(u_n)_n$ defined by,
\begin{equation}
    \forall n\in \N, \ u_n:= 49(p_n +C_1e^{-c_1\lambda_n/2}).
\end{equation}
Then,
\begin{align*}
    u_{n+1} &= 49(p_{n+1} +C_1e^{-c_1\lambda_{n+1}/2}) \\
    &\leq 49(49p_n^2+C_1e^{-c_1\lambda_n}+C_1e^{-c_1\lambda_{n+1}/2}) \\
    &\leq 49^2p_n^2+98C_1e^{-c_1\lambda_n} \\
    &\leq (49p_n)^2+(49C_1e^{-c_1\lambda_n/2})^2 \\
    &\leq u_n^2.
\end{align*}
Therefore we have
$$u_n \leq (u_0)^{2^n}.$$
Since $\ell_0\geq \ell$, then by \eqref{eq:hypothese_crossing} and \eqref{eq:hypothese_crossing2} we see that we may fix $\lambda_0$ big enough so that $u_0<1.$
This shows that there exists a constant $c>0$ such that
\begin{equation}
    \forall n\in \N,\ u_n \leq e^{-c2^n}.
\end{equation}
Consequently, we may find constants $C,c>0$ such that for any $n\in \N$,
\begin{align*}
    p_n \leq Ce^{-c2^n}+Ce^{-c\lambda_n}.
\end{align*}
By \eqref{eq:order_lambda} this implies
\begin{equation}
    p_n \leq Ce^{-c\lambda_n.}
\end{equation}
Since for all $n\in \N$ we have $\ell_n\leq \ell'$ then we have proven that $H(2,\ell',(\lambda_n))$ and $V(2,\ell',(\lambda_n))$ hold. By Lemma \ref{lemma:grow_rectangle} and \ref{lemma:smooth_rectangle} this is enough to get the conclusion.
\end{proof}

\bibliography{biblio.bib}

David, Vernotte
Institut Fourier, UMR 5582, Laboratoire de MathématiquesUniversité Grenoble Alpes, CS 40700, 38058 Grenoble cedex 9, France

\end{document}